\newtheorem{theorem}{Theorem}[section]
\newtheorem{lemma}[theorem]{Lemma}
\newtheorem{corollary}[theorem]{Corollary}
\newtheorem{claim}[theorem]{Claim}
\newtheorem{subclaim}[theorem]{Subclaim}
\newtheorem{conjecture}[theorem]{Conjecture}
\newtheorem*{thmm:chordal}{Theorem~\ref{thm:chordal}}
\newtheorem*{thmm:lcc}{Theorem~\ref{thm:lcc}}
\newtheorem*{corr:ql}{Theorem~\ref{cor:ql}}
\newtheorem*{thmm:claw}{Theorem~\ref{thm:claw}}
\newtheorem*{thmm:genclaw}{Theorem~\ref{thm:genclaw}}
\numberwithin{equation}{section}
\newcommand{\al}{\alpha}
\newcommand{\be}{\beta}
\newcommand{\vp}{\varphi}
\newcommand{\uc}{\vp_*}
\newcommand{\odd}{\chi_{\mathrm{o}}}
\newcommand{\pcf}{\chi_{\mathrm{pcf}}}
\newcommand{\hpcf}{\chi^h_{\mathrm{pcf}}}
\newcommand{\D}{\Delta}
\DeclareMathOperator{\lcc}{lcc}
\DeclareMathOperator{\RN}{RN}
\DeclareMathOperator{\FC}{FC}
\title{Brooks-type theorems for relaxations of square colorings}
\author{Eun-Kyung Cho\thanks{
Department of Mathematics, Hankuk University of Foreign Studies, Yongin-si, Gyeonggi-do, Republic of Korea.
 \texttt{ekcho2020@gmail.com}
}
\and Ilkyoo Choi\thanks{
Department of Mathematics, Hankuk University of Foreign Studies, Yongin-si, Gyeonggi-do, Republic of Korea.
\texttt{ilkyoo@hufs.ac.kr}
}
\and Hyemin Kwon\thanks{
Department of Mathematics, Ajou University, Suwon-si, Gyeonggi-do, Republic of Korea.
\texttt{khmin1121@ajou.ac.kr}
}
\and Boram Park\thanks{
Department of Mathematics, Ajou University, Suwon-si, Gyeonggi-do, Republic of Korea.
\texttt{borampark@ajou.ac.kr}
}}
\date\today
\begin{document}
 
\maketitle
\begin{abstract}
The following relaxation of proper coloring the square of a graph was recently introduced: 
for a positive integer $h$, the {\it proper $h$-conflict-free chromatic number} of a graph $G$, denoted $\hpcf(G)$, is the minimum $k$ such that $G$ has a proper $k$-coloring where every vertex $v$ has $\min\{\deg_G(v),h\}$ colors appearing exactly once on its neighborhood.
Caro, Petru\v{s}evski, and \v{S}krekovski put forth a Brooks-type conjecture: if $G$ is a graph with $\D(G)\ge 3$, then $\pcf^1(G)\leq \D(G)+1$.
The best known result regarding the conjecture is $\pcf^1(G)\leq 2\D(G)+1$, which is implied by a result of Pach and Tardos. 
We improve upon the aforementioned result for all $h$, and also enlarge the class of graphs for which the conjecture is known to be true.

Our main result is the following: for a graph $G$, if $\D(G) \ge h+2$, then $\hpcf(G)\le (h+1)\D(G)-1$; this is tight up to the additive term as we  explicitly construct infinitely many graphs $G$ with $\hpcf(G)=(h+1)(\D(G)-1)$.
We also show that 
the conjecture is true for chordal graphs, and obtain partial results for quasi-line graphs and claw-free graphs.
Our main result also improves upon a Brooks-type result for $h$-dynamic coloring. 
\end{abstract}

\section{Introduction and main results} 
All graphs in this paper are finite and simple.
For a graph $G$, let $V(G)$ and $E(G)$ denote its vertex set and edge set, respectively.
For a vertex $v$, let $\deg_G(v)$ and $N_G(v)$ denote the degree and the neighborhood, respectively, of $v$. 
The maximum degree of (a vertex of) a graph $G$ is denoted by $\D(G)$. 
For a positive integer $k$, a {\it proper $k$-coloring} of a graph $G$ is a function on $V(G)$ that assigns each vertex a color in $\{1, \ldots, k\}$ so that the end points of every edge receive different colors. 
The minimum $k$ for which $G$ has a proper $k$-coloring is the {\it chromatic number} of $G$, denoted $\chi(G)$. 
A greedy coloring algorithm according to an arbitrary ordering of the vertices of a graph $G$ guarantees that $G$ has a proper $(\D(G)+1)$-coloring, so $\chi(G)\leq \D(G)+1$.
Since every edge acts as a restriction, it is natural to seek an upper bound on the chromatic number in terms of the maximum degree. 
These types of results are known as Brooks-type theorems, as 
Brooks~\cite{1941Brooks} characterized all cases where equality holds. 

Given a graph $G$, the {\it square} of $G$, denoted $G^2$, is obtained by adding all edges between vertices of distance at most 2 in $G$. 
Coloring the square of a graph is a popular avenue of research, see~\cite{2008KrKr} for a survey on graph coloring with distance constraints and~\cite{1995JeTo} for a prominent book on graph coloring. 
For a graph $G$, a vertex of maximum degree and all its neighbors must receive distinct colors in a proper coloring of $G^2$, so a trivial lower bound on $\chi(G^2)$ is $\D(G)+1$.
Thus, we obtain the following chain of inequalities: $\chi(G)\leq\D(G)+1\leq\chi(G^2)\leq\D(G^2)+1\leq (\D(G) )^2+1$. 
The last inequality holds by simply counting all possible neighbors of a maximum degree vertex. 
Recently, two new coloring parameters that are stronger than a proper coloring of a graph $G$, but weaker than a proper coloring of $G^2$, were introduced and gained immediate attention. 
We are interested in obtaining Brooks-type results regarding these new coloring parameters.

The first is the concept of an  odd coloring, formalized by Petru\v{s}evski and \v{S}krekovski~\cite{petrusevski2021colorings} as a strengthening of proper coloring. 
For a positive integer $k$, an {\it odd $k$-coloring} of a graph $G$ is a proper $k$-coloring of $G$ such that every vertex of positive degree has a color appearing an odd number of times on its neighborhood. 
The minimum $k$ for which $G$ has an odd $k$-coloring is the {\it odd chromatic number} of $G$, denoted $\odd(G)$. 
If $\chi_o(G)\le k$, then $G$ is \textit{odd $k$-colorable}. 
Ever since the first paper by Petru\v{s}evski and \v{S}krekovski~\cite{petrusevski2021colorings} appeared, there have been numerous papers~\cite{caro2022remarksodd,2023ChChKwBo,cranston2022note,dujmovic2022odd,liu20221,metrebian2022odd,petr2022odd,qi2022odd,tian2022every,tian2022every2,tian2022odd,wang2022odd,cranston2022odd} studying various aspects of this new coloring concept across several graph classes. 
In particular, Caro, Petru\v{s}evski, and \v{S}krekovski~\cite{caro2022remarksodd} conjectured the following Brooks-type statement; namely, the same number of colors used by the greedy coloring algorithm for proper coloring is sufficient for odd coloring a graph.

\begin{conjecture}[\cite{caro2022remarksodd}]\label{conj:odd-brooks}
    If $G$ is a graph with $\D(G)\geq 3$, then $\odd(G)\leq \D(G)+1$. 
\end{conjecture}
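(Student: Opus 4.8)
The plan is to argue by contradiction via a vertex-minimal counterexample. Let $G$ be a connected graph with $\D(G)\ge 3$ and $\odd(G)>\D(G)+1$ chosen with $|V(G)|$ minimum, and set $\D=\D(G)$ and $k=\D+1$. First I would clear away the easy structure: $G$ may be assumed connected (otherwise color the components separately, treating the finitely many low-degree component types directly), and I would try to establish a minimum-degree bound forcing $G$ to be nearly $\D$-regular, so that the Brooks-type obstructions such as $K_{\D+1}$ can be isolated. The aim of this phase is to locate a bounded \emph{reducible configuration}---a vertex $v$, or a short path or bounded neighborhood around $v$---whose removal leaves a graph that, by minimality, admits an odd $k$-coloring that we then try to extend back.

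The heart of the argument is this extension step, and it is here that odd coloring departs sharply from ordinary Brooks colorings. Suppose $G-v$ has an odd $k$-coloring $\varphi$. To extend $\varphi$ to $v$ I must pick a color $c$ for $v$ subject to three constraints: (i) $c$ avoids every color on $N_G(v)$ (properness); (ii) some color occurs an odd number of times on $N_G(v)$ (the odd condition at $v$); and (iii), the crucial one, giving $v$ the color $c$ must not flip the parity of $c$ at any neighbor $u\in N_G(v)$ so as to destroy the odd color that $u$ relied upon in $\varphi$. Constraint (i) forbids at most $\deg_G(v)\le\D$ colors, so with $k=\D+1$ colors at least one survives for properness; and when $\deg_G(v)$ is odd the colors on $N_G(v)$ cannot all appear an even number of times, so (ii) comes for free. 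The genuine enemy is (iii): placing color $c$ at $v$ increments the count of $c$ in each neighbor's neighborhood, and a neighbor $u$ whose sole odd color in $\varphi$ was $c$ now sees it an even number of times and loses the property.

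To repair such violations I would attempt a local recoloring or Kempe-chain argument: when $c$ breaks the odd condition at some neighbors, recolor a controlled set---for instance swap two colors along an alternating chain, or recolor $v$ together with a bounded \emph{repair set}---so that each affected vertex regains an odd color while properness is preserved, using the extra color beyond Brooks' threshold as the available slack. The main obstacle, and the reason the conjecture is still open in full generality, is that the parity conditions are simultaneously fragile and nonlocal: a recoloring intended to fix one vertex's odd color can itself flip parities at its own neighbors, triggering a cascade with no evident bound, and unlike properness there is no monotone potential that a Kempe swap is guaranteed to decrease. I therefore expect this strategy to close only when the neighborhood structure caps the cascade---for example when cliques or forbidden induced subgraphs limit how many vertices can depend on a single fragile odd color---which is exactly the regime making the conjecture tractable for chordal, quasi-line, and claw-free graphs, and which leaves the general bound beyond the reach of a direct extension argument.
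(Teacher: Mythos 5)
The statement you were asked to prove is labeled a \emph{conjecture} in the paper, and the paper contains no proof of it; it only establishes partial results (the general bound $\odd(G)\le\pcf(G)\le 2\D(G)-1$ for $\D(G)\ge 3$ via Theorem~\ref{thm:pcf:h-ver2} with $h=1$, and the full conjecture for chordal graphs via Theorem~\ref{thm:chordal}). Your proposal is likewise not a proof: it is a strategy outline whose decisive step --- the ``repair'' of neighbors whose unique odd color is destroyed when $v$ is colored --- is never carried out, and you yourself concede that the cascade of parity violations has ``no evident bound'' and that the approach ``leaves the general bound beyond the reach of a direct extension argument.'' That is the concrete gap: no reducible configuration is exhibited, no potential function controls the Kempe-style recoloring, and no count shows that a color survives constraints (i)--(iii) simultaneously with only $\D+1$ colors. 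Indeed constraint (iii) alone can forbid up to $\deg_G(v)$ additional colors (one per neighbor whose only odd color would be killed), which is exactly why the naive count only yields roughly $2\D+1$ colors --- the bound the paper starts from.

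Your diagnosis of the difficulty is accurate and mirrors how the paper handles its partial results: the forbidden-color set $\FC_{\vp}(v)$ and the notion of $\vp$-risky neighbors in Section~\ref{sec:main} are precisely a bookkeeping device for your constraint (iii), and the price paid for controlling it is a color budget of order $(h+1)\D$ rather than $\D+1$. Likewise, your closing remark that the cascade can be capped when neighborhoods are clique-structured is exactly the mechanism behind Theorems~\ref{thm:chordal}--\ref{thm:claw}. So the proposal is a reasonable account of the state of the problem, but it does not constitute a proof of the conjecture, and none exists in the paper either.
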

Note that the lower bound on the maximum degree is necessary since a $5$-cycle $C_5$ has maximum degree $2$, yet $\odd(C_5)=5$. 
In~\cite{caro2022remarksodd}, the authors prove  Conjecture~\ref{conj:odd-brooks} for graphs with maximum degree 3.
They also show that every graph $G$ has an odd $2\D(G)$-coloring, unless $G$ is a $5$-cycle, and the bound of $2\D(G)$ is tight for infinitely many cycles. 

The second came shortly after, and is a strengthening of odd coloring  named proper conflict-free coloring, introduced by Fabrici, Lu\v{z}ar, Rindo\v{s}ov\'{a}, and Sot\'{a}k~\cite{FABRICI202380}. 
For a positive integer $k$, a {\it proper conflict-free $k$-coloring} of a graph $G$ is a proper $k$-coloring of $G$ such that every vertex of positive degree has a color appearing exactly once on its neighborhood. 
The minimum $k$ for which $G$ has a proper conflict-free $k$-coloring is the {\it proper conflict-free chromatic number} of $G$, denoted $\pcf(G)$. 
This new coloring parameter also received considerable attention~\cite{arXiv_Liu,CARO2023113221,arXiv_Hickingbotham,ahn2022proper,cho2022proper,cranston2022proper,kamyczura2022conflict,cho2022relaxation}.
The originators of Conjecture~\ref{conj:odd-brooks}  put forth the following additional Brooks-type conjecture regarding the proper conflict-free chromatic number.
Namely, they conjectured $\D(G)+1$ is between $\pcf(G)$ and $\chi(G^2)$ for a graph $G$ when $\D(G)\geq 3$.  

\begin{conjecture}[\cite{CARO2023113221}]\label{conj:pcf-brooks}
    If $G$ is a graph with $\D(G)\geq 3$, then $\pcf(G)\leq \D(G)+1$. 
\end{conjecture}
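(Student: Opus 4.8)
The plan is to attack Conjecture~\ref{conj:pcf-brooks} head-on by induction on $|V(G)|$. Suppose for contradiction that it fails, and let $G$ be a connected vertex-minimal counterexample, so $\D(G)=\D\ge 3$, $\pcf(G)\ge \D+2$, and every graph on fewer vertices with maximum degree at most $\D$ admits a proper conflict-free $(\D+1)$-coloring (the finitely many small or low-maximum-degree exceptions, notably $C_5$, being checked by hand). Fix a palette of $\D+1$ colors throughout. The engine is a vertex-deletion extension lemma: for a vertex $v$, color $G-v$ by minimality and reinsert $v$. The key simplifying observation is that adding $v$ back changes the colored neighborhood of \emph{only} the direct neighbors of $v$; each second neighbor is untouched and so keeps its witness automatically. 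Moreover, for a neighbor $u$ of $v$ with witness color $c_u$ (the color appearing exactly once on $N_{G-v}(u)$), reinserting $v$ with color $\gamma$ preserves that witness whenever $\gamma\neq c_u$. Hence the only colors endangering neighbors are the at most $\deg(v)$ witness colors, and properness forbids at most $\deg(v)$ more, so fewer than $\D+1$ colors are forbidden once $\deg(v)\le \D/2$. Since a degree-$1$ vertex has its unique neighbor as an automatic witness, and a degree-$2$ vertex is safe unless its two neighbors happen to share a color, low-degree vertices are readily reducible, and the first milestone is to push the minimum degree of $G$ well above $2$.

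The genuine difficulty is that, even when a proper witness-preserving color $\gamma$ exists for the neighbors, it does not guarantee a witness for \emph{$v$ itself}. Whether $v$ has a conflict-free witness depends only on the already-fixed colors of $N(v)$: $v$ is happy exactly when some color occurs once on $N(v)$, and nothing about the choice of $\gamma$ can change this. Thus the reduction can fail precisely when $v$'s neighborhood is colored with every color an even number of times, and this failure is independent of all the budget arithmetic above. Compounding this, as $\deg(v)$ grows toward $\D$ the protective budget $2\deg(v)<\D+1$ is lost, so for the near-regular part of $G$ we can neither freely protect all neighbor witnesses nor be sure $v$ acquires one. This is where a purely greedy reinsertion breaks down and a recoloring step becomes unavoidable.

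To manufacture a witness for $v$, the plan is to recolor a single neighbor $w\in N(v)$ so that some color becomes unique on $N(v)$, using a Brooks-style Kempe-chain swap on a two-color subgraph to change $w$'s color without violating properness; a parity/counting argument over the $\D+1$ colors should show such a swap can always be steered to leave a singleton color on $N(v)$. The intended reducible configurations, in increasing difficulty, are a vertex of degree at most two, two adjacent small-degree vertices colored simultaneously so that each is the other's witness, and a small-degree vertex whose neighborhood already spans many colors. A discharging argument, assigning each vertex charge $\deg(v)-\D$ (or a normalized variant) and redistributing along edges, would then show that a graph critical for proper conflict-free $(\D+1)$-colorability cannot avoid all of these, contradicting the existence of $G$. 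Throughout, $K_{\D+1}$ plays the role of the Brooks extremal graph: its neighborhoods are rainbow, so it is conflict-free with exactly $\D+1$ colors and offers no slack, which guides where the configurations must be declared reducible.

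The main obstacle, and the reason the conjecture remains open, is the coupling between the conflict-free witness requirement and properness under recoloring: a Kempe swap or local recoloring that repairs $v$'s witness simultaneously alters the neighborhoods of $w$'s own neighbors (the second neighbors of $v$), and can destroy one of \emph{their} witnesses by duplicating its unique color. Hence the recoloring cannot be analyzed purely locally, and one must track the witnesses of all affected vertices at once, uniformly in $\D$. Controlling this cascade is the step I expect to consume the bulk of the work and the point at which the present approach most plausibly stalls; improving on the known $\pcf(G)\le 2\D(G)+1$ bound of Pach and Tardos all the way down to $\D+1$ hinges on establishing a witness-preserving recoloring lemma strong enough to survive the near-regular, highly connected instances where no budgetary slack is available.
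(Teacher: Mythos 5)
You have attempted to prove a statement that the paper does not prove and that remains open: Conjecture~\ref{conj:pcf-brooks} is stated in the paper as a conjecture of Caro, Petru\v{s}evski, and \v{S}krekovski, and the paper's actual contribution toward it is strictly weaker --- Theorem~\ref{thm:pcf:h-ver2} with $h=1$ gives $\pcf(G)\le 2\Delta(G)-1$, plus a verification of the conjecture for chordal graphs (Theorem~\ref{thm:chordal}) and partial results for quasi-line and claw-free graphs. So there is no paper proof to match your approach against; the only fair comparison is that the paper deliberately retreats to a multiplicative bound precisely because the obstruction you identify has not been overcome by anyone.

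The genuine gap in your proposal is that its crux is deferred rather than proved: everything rests on a ``witness-preserving recoloring lemma'' asserting that a Kempe-style swap can always be steered to create a singleton color on $N(v)$ without destroying witnesses of second neighbors, and you yourself concede this is ``the point at which the present approach most plausibly stalls.'' A plan whose central lemma is conjectural is not a proof. Beyond that, several concrete steps fail as stated. First, your inductive hypothesis is false for $\Delta=3$: $C_5$ has maximum degree $2\le 3$ but $\pcf(C_5)=5>\Delta+1=4$, so deleting a vertex can leave a graph to which minimality does not apply; this is not a matter of ``checking exceptions by hand'' but a structural break in the induction for small $\Delta$. Second, the degree-$2$ reduction does not go through when both neighbors of $v$ share a color, since (as you correctly observe) no choice of $\gamma$ for $v$ can manufacture a witness for $v$ itself --- but you never resolve this case, and it is exactly the $C_5$-type obstruction. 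Third, the budget argument $2\deg(v)<\Delta+1$ covers only $\deg(v)\le\Delta/2$, leaving the near-regular part of $G$ entirely to the unproven recoloring lemma; and the proposed discharging with charge $\deg(v)-\Delta$ is nonpositive at every vertex, so it cannot yield a contradiction unless the reducible configurations are actually established, which they are not. If you want a completable project along these lines, the realistic target is the paper's regime: an almost-greedy argument over a good vertex ordering (via ear decompositions, as in Lemma~\ref{lemma:pcf:sequence}) with a forbidden-color budget of roughly $(h+1)\Delta$, not $\Delta+1$.
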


Note that by definition, $\chi(G)\leq \odd(G)\leq \pcf(G)\leq \chi(G^2)$ holds for every graph $G$.
Thus, Conjecture~\ref{conj:pcf-brooks} is a stronger statement than Conjecture~\ref{conj:odd-brooks}. 
In~\cite{CARO2023113221}, the authors pointed out that Conjecture~\ref{conj:pcf-brooks} is true for graphs with maximum degree 3 by a result on superlinear colorings~\cite{liu2013linear}.  
The authors themselves proved that every graph $G$ has a proper conflict-free $\left\lfloor 2.5\D(G)\right\rfloor$-coloring and equality holds if and only if $G\in\{K_2, C_5\}$. 
They also showed that every graph $G$ that is either chordal or claw-free has a proper conflict-free $(2\D(G)+1)$-coloring. 
Recall that a $5$-cycle $C_5$ requires $2\D(C_5)+1$ colors for an odd coloring, so also for a proper conflict-free coloring. 

Actually, a result by Pach and Tardos~\cite{2009Pach} already implies that every graph $G$ has a proper conflict-free $(2\D(G)+1)$-coloring:
given a graph $G$, define the hypergraph $H_G$ so that 
$V(H_G)=V(G)$ and $E(H_G)=E(G)\cup\{N_G(v)\mid v\in V(G)\}$.
Note that $\Delta(H_G)\le 2\Delta(G)$, and a conflict-free coloring of $H_G$ naturally induces a proper conflict-free coloring of $G$. 
Since every hypergraph $H$ has a conflict-free coloring with $\D(H)+1$ colors by~\cite{2009Pach}, 
we obtain $\pcf(G) \le 2\Delta(G)+1$. 
This was also pointed out by Cranston and Liu in~\cite{cranston2022proper}, where they proved that a graph $G$ with sufficiently large maximum degree (at least $10^{9}$) satisfies $\pcf(G)\leq \left\lceil1.6550826\D(G)+\sqrt{\D(G)}\right\rceil$, even for the more general list version. 
Recently, Kamyczura and Przyby{\l}o~\cite{kamyczura2022conflict} proved Conjecture~\ref{conj:pcf-brooks} asymptotically for graphs with large enough minimum degree.

As a step towards Conjecture~\ref{conj:pcf-brooks} (and also Conjecture~\ref{conj:odd-brooks}), we improve upon the best known result of $\pcf(G)\leq 2\D(G)+1$ for a graph $G$.
We actually go much further and obtain a result for 
a strengthening of proper conflict-free coloring, which we call “proper $h$-conflict-free coloring" where $h$ is a positive integer. 
For a positive integer $k$, a {\it proper $h$-conflict-free $k$-coloring} (or a proper $h$-CF $k$-coloring for short) of a graph $G$ is a proper $k$-coloring of $G$ such that every vertex $v$ has $h_v:=\min\{\deg_G(v),h\}$ colors appearing exactly once on its neighborhood. 
The {\it proper $h$-conflict-free chromatic number} of $G$, denoted $\hpcf(G)$, is the minimum $k$ such that $G$ has a proper $h$-CF $k$-coloring.  
Note that $\hpcf(G)=\chi(G^2)$ whenever $h\ge \Delta(G)-1$. 
We now state our main result:

\begin{theorem}\label{thm:pcf:h-ver2}
Let $h\ge 1$. 
If $G$ is a graph with $\Delta(G)\geq h+2$, then $\hpcf(G)\le (h+1)\Delta(G)-1$.
\end{theorem}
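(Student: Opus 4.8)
The plan is to build a proper $h$-CF coloring greedily, processing the vertices in a well-chosen order and, as we go, \emph{committing} each vertex's conflict-free witnesses and \emph{protecting} them against later color choices. Concretely, fix a spanning tree in each component of $G$ and order $V(G)$ so that every vertex is colored before its parent; then every vertex except a component root has at least one neighbor colored later than itself. Writing $N^-(w)$ for the already-colored neighbors of the vertex $w$ currently being colored, I declare witnesses dynamically: the first $h_v=\min\{\deg_G(v),h\}$ neighbors of $v$ to receive colors are designated as the witnesses of $v$, and the rule below forces these $h_v$ neighbors to get pairwise distinct colors, each never reused anywhere on $N_G(v)$. Maintaining this invariant automatically produces $h_v$ colors appearing exactly once on $N_G(v)$ for every $v$, so the final coloring is proper $h$-CF. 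Thus the entire task reduces to showing that a usable color is always available when we color $w$.

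When coloring $w$, the color $\phi(w)$ must respect three types of constraints. First, \emph{properness}: $\phi(w)$ avoids $\{\phi(u):u\in N^-(w)\}$, at most $|N^-(w)|$ colors. Second, \emph{protection}: for each neighbor $v$ of $w$ whose witness set is already full, $\phi(w)$ must avoid the (at most $h_v\le h$) colors of the committed witnesses of $v$, since reusing such a color at the neighbor $w$ of $v$ would destroy a witness. Third, \emph{creation}: for each neighbor $v$ of $w$ still lacking witnesses, $w$ becomes a witness of $v$, so $\phi(w)$ must differ from the colors of the fewer than $h_v$ neighbors of $v$ colored so far; this makes $\phi(w)$ unique on $N_G(v)$ and keeps the witness colors of $v$ distinct. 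In every case a single neighbor $v$ forbids at most $h_v\le h$ colors, so protection and creation together forbid at most $h\deg_G(w)\le h\Delta$ colors. For a non-root $w$ the ordering gives $|N^-(w)|\le\deg_G(w)-1\le\Delta-1$, so the total number of forbidden colors is at most $(\Delta-1)+h\Delta=(h+1)\Delta-1$. Hence with $(h+1)\Delta$ colors the greedy step never gets stuck, and the witness invariant makes the output a valid proper $h$-CF coloring; the only missing ingredient is the final single color needed to reach $(h+1)\Delta-1$.

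The hard part is exactly this last color. The bound $(h+1)\Delta-1$ is attained only by a \emph{saturated} vertex $w$: one with $\deg_G(w)=\Delta$, exactly one forward neighbor, all neighbors of degree at least $h+1$ with full witness sets, and no overlaps among the forbidden sets (in particular, the neighborhood must contribute no witnessing coincidences). This is the direct analogue of the complete graphs and odd cycles in Brooks' theorem, and I expect it to be the main obstacle. I would resolve it by a Brooks-type local recoloring: whenever the greedy step at $w$ would see all $(h+1)\Delta-1$ colors forbidden, the forbidden colors must all be distinct and fill the palette, which pins down a rigid local structure (each neighbor contributing a full, disjoint block of $h$ witness colors plus its own color); one then reroutes along a short Kempe-type chain between $w$ and a suitable neighbor to liberate one color without violating properness or any committed witness. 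This is precisely where the hypothesis $\Delta\ge h+2$ is used: since $h\le\Delta-2$, the palette exceeds the combined witness-and-properness demand by enough to guarantee that the recoloring (or, alternatively, a choice of which neighbor serves as witness) can always be carried out, breaking the extremal configuration. I would organize the write-up so that the generic greedy analysis of the first two paragraphs handles all non-saturated vertices outright, and the recoloring lemma dispatches the saturated case, together yielding $\hpcf(G)\le(h+1)\Delta-1$.
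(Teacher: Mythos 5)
There is a genuine gap: the entire difficulty of the theorem is concentrated in the step you defer. Your greedy analysis correctly shows that a non-root vertex $w$ sees at most $(\Delta-1)+h\Delta=(h+1)\Delta-1$ forbidden colors, which only guarantees success with $(h+1)\Delta$ colors; the claimed bound $(h+1)\Delta-1$ requires saving one more color at every step, and your resolution of this --- ``reroute along a short Kempe-type chain'' in the saturated case --- is asserted, not proved. Kempe chains control properness only: recoloring a vertex $u$ can silently destroy the unique-color witnesses of every neighbor of $u$, i.e.\ it acts at distance two, so there is no reason a chain swap preserves the committed witness structure. This is exactly where the paper spends essentially all of Section~2: it reduces to a $2$-edge-connected counterexample, orders the vertices via an ear decomposition so that every non-final vertex has a forward neighbor \emph{and} every vertex after the first has two neighbors among the later-or-equal vertices, introduces the notions of risky and special neighbors to quantify when the $-1$ can be absorbed (Lemma~2.4 and inequality (2.3)), and then needs a long structural analysis of the initial cycle $E_0$ (it cannot be a $3$-cycle or a $5$-cycle, the case $h=\Delta-2$ must be treated separately, and the last three vertices require the ``nice sequence'' machinery). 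None of this is replaced by your sketch, and the hypothesis $\Delta\ge h+2$ is never actually used in the part you prove.

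A second, independent problem is the last vertex of your ordering. A spanning-tree order leaves the root with all $\deg_G(\mathrm{root})\le\Delta$ neighbors already colored, so your own count gives up to $\Delta+h\Delta=(h+1)\Delta$ forbidden colors there --- your greedy step can fail at the root even with the larger palette of $(h+1)\Delta$ colors, so the ``generic'' part of your argument does not in fact handle all non-saturated vertices. The paper avoids this by ending the ordering on a shortest cycle rather than at a single root, which is what makes the endgame tractable (every late vertex retains two uncolored neighbors until the very end). As written, your proposal establishes only $\hpcf(G)\le(h+1)\Delta(G)$ for graphs with a vertex of degree less than $\Delta$ in each component, and the passage to $(h+1)\Delta(G)-1$ remains open.
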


Theorem~\ref{thm:pcf:h-ver2} is tight up to the additive term, as we can construct a graph $G$ that requires $(h+1)(\D(G)-1)$ colors in a proper $h$-CF coloring. 
For a positive integer $n$, let $[n]=\{1,\ldots,n\}$. Let $n\ge 2$ be a positive integer such that $L(n)=n-1$, where $L(n)$ is the maximum size of a family of orthogonal Latin squares of order $n$.  Let $L_1$, $\ldots,$ $L_{n-1}$ be a family of orthogonal Latin squares of order $n$, and for each $k\in[n-1]$ and $i,j\in[n]$, let $L_k(i,j)$ be the symbol in the $(i,j)$-position of $L_k$.
Define the graph $G_n$ as:
\begin{eqnarray*}
V(G_n) &=& \{ v_{i,j}\mid i,j\in [n]\} \cup \{s_{k,i}\mid k\in[n-1],i\in[n]\}\cup\{r_i\mid i\in[n]\}\cup\{c_i\mid i\in[n]\},\\
E(G_n)&=&\bigcup_{i,j\in [n]} \left( \{ v_{i,j}r_i, v_{i,j}c_j\}\cup \{v_{i,j}s_{k,L_k(i,j)} \mid k\in [n-1]\} \right).
\end{eqnarray*}
See Figure~\ref{Latin-square} for an illustration. 

\begin{figure}[h!]
\centering
\includegraphics[width=15cm]{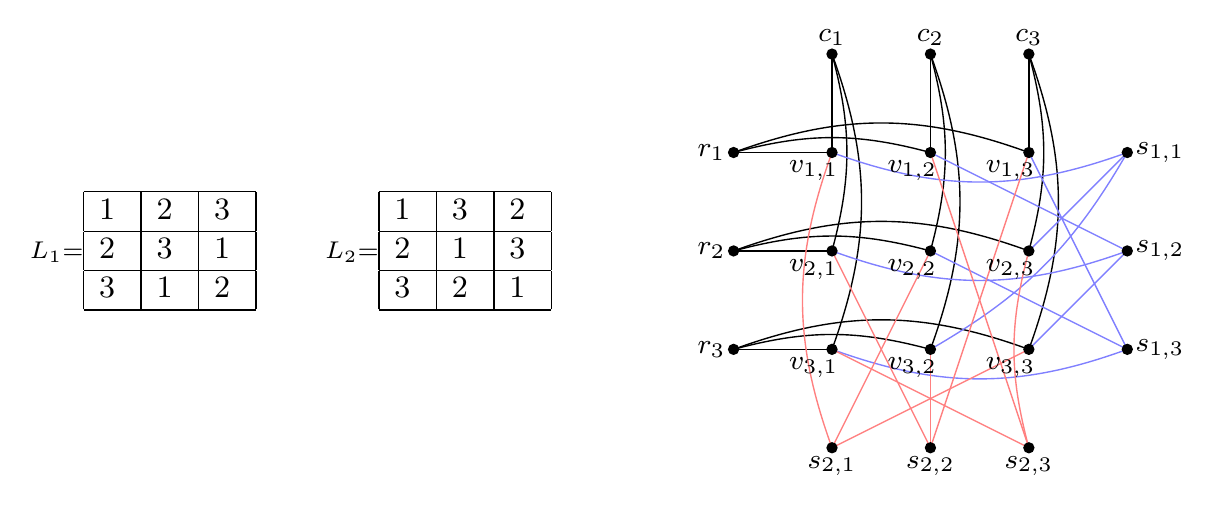}
\caption{Orthogonal Latin squares $L_1$ and $L_2$ of order $3$, and the graph $G_3$}
\label{Latin-square}
\end{figure}

Note that $\deg_{G_n}(v_{i,j})=n+1$, and  $\deg_{G_n}(r_i)=\deg_{G_n}(c_i)=\deg_{G_n}(s_{k,i})=n$, so $\D(G_n)=n+1$.
In a proper $(n-1)$-CF coloring of $G_n$,
all vertices $v_{i,j}$ receive distinct colors, so $\pcf^{n-1}(G_n)\ge n^2 = n(\Delta(G_n)-1)$.
If $n$ is a prime power, then it is well-known that $L(n)=n-1$, so there are infinitely many pairs of a graph $G$ and an integer $h$ for which $\hpcf(G)\ge (h+1)(\Delta(G)-1)$.

We take a slight detour and introduce an application of Theorem~\ref{thm:pcf:h-ver2}. For positive integers $h$ and $k$,
an {\it $h$-dynamic $k$-coloring} of a graph $G$ is a proper $k$-coloring of $G$ such that every vertex $v$ has at least $h_v$ colors appearing on its neighborhood. 
Thus, a proper $h$-CF coloring is an $(h+1)$-dynamic coloring, so the following corollary of Theorem~\ref{thm:pcf:h-ver2} is an improvement of a Brooks-type result by Jahanbekam, Kim, O, and West~\cite{jahanbekam2016r} who proved every graph $G$ has an $h$-dynamic $(h\D(G)+1)$-coloring. 

\begin{corollary}\label{cor:rD}
Every graph $G$ has an $h$-dynamic $(h\D(G)-1)$-coloring for a positive integer $2\le h\le \D(G)-1$.
\end{corollary}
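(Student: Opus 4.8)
The plan is to deduce the corollary directly from Theorem~\ref{thm:pcf:h-ver2} by exploiting the observation, already noted in the excerpt, that a proper $h$-CF coloring is an $(h+1)$-dynamic coloring. The only genuine work is to verify this implication carefully and to track the index shift so that the hypotheses line up; there is no combinatorial construction to build.

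First I would establish the implication. Suppose $\varphi$ is a proper $h'$-CF coloring of $G$ and fix a vertex $v$. If $\deg_G(v) \le h'$, then $h_v = \deg_G(v)$ colors appear exactly once on $N_G(v)$, which forces all $\deg_G(v)$ neighbors to receive distinct colors; hence $\deg_G(v) = \min\{\deg_G(v), h'+1\}$ distinct colors appear on $N_G(v)$, as required. If $\deg_G(v) \ge h'+1$, then $h'$ colors appear exactly once on $N_G(v)$, accounting for exactly $h'$ of the (at least $h'+1$) neighbors; any remaining neighbor must carry a color outside these $h'$ colors (otherwise one of them would appear more than once), so at least $h'+1 = \min\{\deg_G(v), h'+1\}$ distinct colors appear. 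In both cases $\varphi$ meets the $(h'+1)$-dynamic requirement at $v$, so $\varphi$ is an $(h'+1)$-dynamic coloring.

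Next I would apply Theorem~\ref{thm:pcf:h-ver2} with the parameter $h-1$ in place of $h$. Since $h \ge 2$, the value $h-1$ is a positive integer, and since $h \le \Delta(G)-1$ we have $\Delta(G) \ge h+1 = (h-1)+2$, so the hypothesis $\Delta(G) \ge (h-1)+2$ of the theorem is satisfied. The conclusion gives $\chi^{h-1}_{\mathrm{pcf}}(G) \le \big((h-1)+1\big)\Delta(G)-1 = h\Delta(G)-1$, that is, $G$ admits a proper $(h-1)$-CF coloring using at most $h\Delta(G)-1$ colors. By the implication from the previous paragraph with $h' = h-1$, such a coloring is an $h$-dynamic coloring, so $G$ has an $h$-dynamic $(h\Delta(G)-1)$-coloring, completing the proof.

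The main thing to get right is the bookkeeping of the index shift: the corollary's parameter $h$ corresponds to the theorem's parameter $h-1$, and the two range conditions $h \ge 2$ and $h \le \Delta(G)-1$ are precisely what guarantee, respectively, that $h-1$ is a valid (positive) parameter for the theorem and that its maximum-degree hypothesis holds. I do not anticipate a deeper obstacle.
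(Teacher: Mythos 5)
Your proposal is correct and is exactly the paper's argument: the paper derives the corollary from Theorem~\ref{thm:pcf:h-ver2} via the observation that a proper $h$-CF coloring is an $(h+1)$-dynamic coloring, with the same index shift $h \mapsto h-1$ that you carry out. Your verification of the implication (splitting on whether $\deg_G(v) \le h'$ or $\deg_G(v) \ge h'+1$) fills in a detail the paper leaves implicit, and your bookkeeping of the hypotheses is accurate.
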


Jendrol' and Onderko~\cite{arXiv_hued} recently showed that every graph $G$ has an $h$-dynamic $ ((h - 1)(\D(G) + 1) + 2)$-coloring.
Corollary~\ref{cor:rD} gives a better upper bound when $h=\D(G)-1$  and the same bound when $h=\D(G)-2$.

Back to partial results regarding Conjectures~\ref{conj:odd-brooks} and~\ref{conj:pcf-brooks}, we obtain the following results on some well-known graph classes.
We first prove a result regarding proper $h$-CF coloring that implies Conjecture~\ref{conj:pcf-brooks} is true for chordal graphs.
Recall that a chordal graph $G$ has a \textit{simplicial ordering} $v_1,\ldots,v_n$ of the vertices of $G$, which means that for each $i\in[n]$, $v_i$ and its neighbors in $G[\{v_i,\ldots, v_n\}]$ form a clique. 
For a vertex $v_i$, the maximal clique containing $v_i$ in the subgraph of $G$ induced by $\{v_i,v_{i+1},\ldots,v_n\}$ is  a \textit{simplicial clique} of $G$. Let $s(G)$ denote the maximum size of all possible simplicial cliques of $G$.
 
\begin{theorem}\label{thm:chordal}
For a positive integer $h$, if $G$ is a chordal graph, then  $\hpcf(G) \le 1 + (h+1) \cdot \min \left\{  s(G)-1,  \frac{\D(G)+h-1}{2}\right\}.$
\end{theorem}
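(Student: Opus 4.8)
The plan is to process the vertices in the reverse of a simplicial ordering $v_1,\dots,v_n$, that is, to color $v_n, v_{n-1}, \dots, v_1$ greedily while maintaining at each step a proper partial coloring together with a set of reserved \emph{witnesses} for every already-colored vertex. The structural fact driving everything is that when $v_i$ is about to be colored, its already-colored neighbors are exactly its forward neighbors $F(v_i) = N_G(v_i) \cap \{v_{i+1},\dots,v_n\}$, and $\{v_i\} \cup F(v_i)$ is a clique; hence the vertices of $F(v_i)$ carry pairwise distinct colors, and at this moment each of them is a unique color in $N_G(v_i)$. Thus the only ways the conflict-free condition at $v_i$ can fail are that a \emph{later}-colored backward neighbor repeats the color of one of these forward witnesses, or that $v_i$ has too few forward neighbors to supply $h_{v_i}$ witnesses at all. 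The whole scheme is designed to control precisely these two effects.

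For the clique bound $\hpcf(G) \le 1 + (h+1)(s(G)-1)$, I would, upon coloring $v_i$, reserve $\min\{|F(v_i)|, h_{v_i}\}$ of its forward neighbors as permanent witnesses for $v_i$ and declare that every backward neighbor of $v_i$ colored afterwards must avoid the colors of these reserved witnesses. Then the colors forbidden when coloring $v_i$ are of two kinds: the colors of $F(v_i)$ (for properness), of which there are at most $s(G)-1$ since $\{v_i\}\cup F(v_i)$ is a simplicial clique; and, for each forward neighbor $v_k \in F(v_i)$, the at most $h$ colors reserved by $v_k$, contributing at most $h\,|F(v_i)| \le h(s(G)-1)$ colors. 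Summing gives at most $(h+1)(s(G)-1)$ forbidden colors, so one further color is always available. The remaining issue is a vertex $v_k$ with fewer than $h_{v_k}$ forward neighbors, whose witnesses cannot all come from $F(v_k)$; for such $v_k$ I would designate its first $h_{v_k}-|F(v_k)|$ backward neighbors (in coloring order) as witnesses, requiring each to receive a color that is unique in $N_G(v_k)$ at the time it is colored. Because such a $v_k$ has fewer than $h$ forward neighbors and only these few early backward neighbors are colored when a designated witness is placed, this requirement forbids fewer than $h$ additional colors, and one checks it is absorbed into the same count.

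The harder half is the degree bound $\hpcf(G)\le 1+(h+1)\frac{\D(G)+h-1}{2}$, and this is where I expect the main obstacle to lie. Here the clique of forward neighbors may be as large as $\D(G)$, so the crude estimate $|F(v_i)| \le s(G)-1$ is useless and the count must instead be driven by $\deg_G(v_i)$. The point to exploit is a trade-off hidden in $\deg_G(v_i) = |F(v_i)| + |B(v_i)|$: if $v_i$ has many forward neighbors, then it has secure forward witnesses and only few backward neighbors that could ever spoil a witness or demand a fresh color, whereas if it has few forward neighbors then properness and spoiling are cheap but witnesses must be bought from backward neighbors. Quantifying this balance so that the total number of forbidden colors never exceeds $(h+1)\frac{\D(G)+h-1}{2}$ — in particular showing that the not-spoiling constraints inherited from forward neighbors and the freshness constraints of witnesses designated to backward neighbors do not accumulate beyond this threshold — is the technical heart of the argument, with the factor $\tfrac12$ playing the role of the break-even point of the forward/backward trade-off. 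Once both bounds are established, the theorem follows at once, since $\hpcf(G)$ is then at most each of $1+(h+1)(s(G)-1)$ and $1+(h+1)\frac{\D(G)+h-1}{2}$, hence at most $1+(h+1)\min\{\,s(G)-1,\ \frac{\D(G)+h-1}{2}\,\}$.
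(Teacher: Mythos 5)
Your first half (the clique bound $1+(h+1)(s(G)-1)$) is essentially the paper's argument in disguise: the paper deletes a simplicial vertex $v$, colors $G-v$ by minimality, and then colors $v$ with a color outside $\vp(N_G(v))\cup\uc(N_G(v))$, which, unrolled, is exactly your reverse-simplicial-order greedy with reserved witnesses and the same count $(h+1)\deg_G(v)\le(h+1)(s(G)-1)$. (Your treatment of a vertex with fewer than $h_{v}$ forward neighbors is hand-waved but repairable: such a vertex has degree at most $h$, so demanding that all of its neighbors receive pairwise distinct colors costs at most $h$ forbidden colors per forward neighbor, which is already inside your budget.)

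The genuine gap is the degree bound. You correctly identify it as ``the technical heart'' and then do not prove it: you describe a forward/backward trade-off that would need to be ``quantified'' but supply no count, and it is far from clear that the global bookkeeping you sketch (non-spoiling constraints inherited from forward neighbors plus freshness constraints on designated backward witnesses) can be made to close at the threshold $\frac{\D(G)+h-1}{2}$. The paper sidesteps all of this with a local observation. Writing $m_G=\min\bigl\{s(G)-1,\frac{\D(G)+h-1}{2}\bigr\}$, it splits on the degree of the one simplicial vertex $v$ being colored. If $\deg_G(v)\le m_G$, the reservation argument above applies verbatim. If $\deg_G(v)>m_G$, then necessarily $m_G=\frac{\D(G)+h-1}{2}$ (since $s(G)-1\ge\deg_G(v)$), and one gives $v$ \emph{any} color not in $\vp(N_G(v))$: for each neighbor $u$ of $v$, the set $N_G[v]\setminus\{u\}$ is a clique of size $\deg_G(v)$ inside $N_G(u)$, hence carries $\deg_G(v)$ distinct colors, and only the remaining $\deg_G(u)-\deg_G(v)$ neighbors of $u$ can spoil them; so $u$ retains at least $2\deg_G(v)-\deg_G(u)>\D(G)+h-1-\deg_G(u)\ge h-1$, i.e.\ at least $h$, unique colors regardless of the color placed on $v$. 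Above the threshold $\frac{\D(G)+h-1}{2}$ no witness reservation is needed at all --- properness alone suffices --- and that is precisely the role of the factor $\frac12$ you were looking for. Without this observation (or an equivalent completed count), your proposal establishes only the clique bound, not the stated minimum.
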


When $h=1$, Theorem~\ref{thm:chordal} implies Conjecture~\ref{conj:pcf-brooks} is true for chordal graphs.

In addition, we study graphs with bounded local vertex clique cover number.  The \textit{local vertex clique cover number}  of a graph $G$, denoted $\lcc(G)$, is the minimum $q$ such that for every vertex $v$, there are at most $q$ cliques whose union is $N_G(v)$.
We obtain the following result:
 
\begin{theorem}\label{thm:lcc}
If $G$ is a graph with $\lcc(G)\le \ell$, then
$\chi_{o}(G)\le \frac{(2\ell-1)}{\ell}\D(G)+2$.
\end{theorem}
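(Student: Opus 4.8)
The plan is to reduce the problem to finding an ordinary proper coloring, using two observations that make the oddness requirement cheap for most vertices. The first is a parity observation: for a vertex $v$ of \emph{odd} degree, in \emph{any} proper coloring of $G$ some color must appear an odd number of times on $N_G(v)$, since otherwise the color multiplicities on $N_G(v)$ would all be even and sum to the even number $\deg_G(v)$, a contradiction. Thus odd-degree vertices are automatically satisfied, and only even-degree vertices must be controlled. The second observation converts the oddness requirement into a count of distinct colors: if more than $\tfrac12\deg_G(v)$ distinct colors appear on $N_G(v)$, then they cannot all appear at least twice, so some color appears exactly once, which is odd. Hence it suffices to guarantee, for each even-degree vertex $v$, that $N_G(v)$ sees more than $\tfrac12\deg_G(v)$ distinct colors.

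Next I would bring in the clique cover. Fix, for each vertex $v$, a cover of $N_G(v)$ by at most $\ell$ cliques, let $Q_v$ be a largest one, so $|Q_v|\ge \deg_G(v)/\ell$, and write $R_v=N_G(v)\setminus Q_v$, so $|R_v|\le \deg_G(v)(1-1/\ell)\le (1-1/\ell)\D(G)$. In any proper coloring the clique $Q_v$ is rainbow, so it already contributes $|Q_v|$ distinct colors; in particular, if $|Q_v|>|R_v|$ (equivalently $|Q_v|>\tfrac12\deg_G(v)$), then one of the colors on $Q_v$ cannot occur in $R_v$, giving $v$ a color of multiplicity one for free. The genuinely hard case is an even-degree vertex whose largest clique is small, where I would force a private color by hand: choose a vertex $u_v\in Q_v$ and require, in the coloring, that $u_v$ avoid every color on $R_v$. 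Since $u_v$ is already adjacent in $G$ to the rest of $Q_v$, the only new constraints are the $|R_v|\le (1-1/\ell)\D(G)$ pairs $u_vw$ with $w\in R_v$. I would record these as the edges of an auxiliary graph $H$ obtained from $G$ by adding all such pairs; by construction a proper coloring of $H$ makes $u_v$ uniquely colored on $N_G(v)$, hence is an odd coloring of $G$, so $\odd(G)\le \chi(H)\le \D(H)+1$.

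It then remains to bound $\D(H)$ by $\tfrac{2\ell-1}{\ell}\D(G)+1=\bigl(2-\tfrac1\ell\bigr)\D(G)+1$, which with $\chi(H)\le \D(H)+1$ gives the claimed $\odd(G)\le \tfrac{2\ell-1}{\ell}\D(G)+2$. Each vertex keeps its at most $\D(G)$ original edges, so what must be shown is that the edges added at any single vertex number at most $(1-1/\ell)\D(G)+1$. I expect this degree bookkeeping to be the main obstacle: a single vertex lies in many neighborhoods and could be selected as the private vertex $u_v$ for several even-degree vertices $v$ at once, so a careless choice of the $u_v$'s and of the added edges could inflate some degree far beyond $(1-1/\ell)\D(G)$. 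The crux is therefore to choose the private vertices $u_v$ and distribute the added edges coherently across overlapping neighborhoods---exploiting that each $R_v$ is itself covered by the remaining $\ell-1$ cliques and that the clique covers have local multiplicity $\ell$---so that every vertex absorbs at most $(1-1/\ell)\D(G)+1$ new incidences. This balancing is exactly where the factor $\tfrac{2\ell-1}{\ell}$ is extracted, and I would attempt it either by a direct assignment/counting argument (a defect-Hall or discharging-style distribution of the private-vertex duties) or by constructing $H$ greedily while monitoring the running degrees.
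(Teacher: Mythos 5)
Your reductions at the start are all correct (odd-degree vertices are free by parity; a color class of multiplicity one exists whenever more than $\tfrac12\deg_G(v)$ distinct colors appear on $N_G(v)$; a clique in the cover is rainbow), but the proof stops exactly where the theorem actually lives, and the balancing step you defer is not just bookkeeping --- as formulated it appears to fail for two concrete reasons. First, even if you could choose the private vertices $u_v$ as a system of distinct representatives of the cliques $Q_v$ (so that each vertex performs at most one private-vertex duty, which is already forced, since a single duty costs up to $(1-\tfrac1\ell)\D(G)$ new edges at $u_v$ and exhausts the entire budget), the added edges also raise the degree of the \emph{other} endpoint: a vertex $w$ lies in $R_v$ for up to $\deg_G(w)\le \D(G)$ choices of $v$, and under an SDR the corresponding vertices $u_v$ are distinct, so $w$ can acquire up to $\D(G)$ new neighbors and $\D(H)$ can reach $2\D(G)$. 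Then $\chi(H)\le\D(H)+1$ only recovers the trivial bound $2\D(G)+1$. (Replacing the symmetric constraint by an ordered greedy coloring in which each $u_v$ is colored after all of $R_v$ would fix the cost asymmetry, but such an ordering need not exist, since $u_v$ may lie in $R_{v'}$ and $u_{v'}$ in $R_v$.) Second, the SDR itself need not exist: if a clique $Q$ of size about $\D(G)/\ell$ is the chosen clique $Q_v$ for $k$ different vertices $v$, then $k$ can be as large as $\D(G)-|Q|+1$, which exceeds $|Q|$ once $\ell\ge 3$, so Hall's condition fails.

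The paper takes a different and, in effect, dual route that avoids distributing duties altogether. It picks a \emph{single} clique $K$ of size at least $\lceil \D(G)/\ell\rceil+1$ (a largest clique in the cover of the neighborhood of a maximum-degree vertex, together with that vertex), deletes all of $K$, colors $G-K$ by minimality, and then re-inserts the whole clique at once via a separate lemma: if $m\ge 2\D(G)-|K|+3$, any odd $m$-coloring of $G-K$ extends to $G$. The point is that each vertex of $K$ has only $\D(G)-|K|+1$ neighbors outside $K$, so its list of usable colors (avoiding those neighbors' colors and their current odd colors) has size at least $|K|+1$, and a counting argument over the $(|K|+1)!$ resulting list colorings of $K$ shows some choice leaves every vertex of $K$ with an odd color. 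The saving of $|K|\approx\D(G)/\ell$ is extracted from the deleted clique itself rather than from a global assignment of private vertices, which is why no Hall-type or discharging argument is needed. If you want to salvage your construction, you would need to make the $u_v$-constraints one-sided and acyclic, and I do not see how to do that in general.
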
 

A graph $G$ with $\lcc(G)\le 2$ is  \textit{quasi-line}. 
For a quasi-line graph $G$ with an odd coloring, one color cannot appear on three neighbors of a vertex, so $\odd(G)=\pcf(G)$.
Thus, the following corollary is an immediate consequence of Theorem~\ref{thm:lcc}. 

\begin{corollary}\label{cor:ql}
If $G$ is a quasi-line graph,  then $\pcf(G) =\odd(G)\le \frac{3}{2}\D(G)+2$.  
\end{corollary}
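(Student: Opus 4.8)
The plan is to deduce the corollary directly from Theorem~\ref{thm:lcc} applied with $\ell=2$, after establishing the key structural observation that for a quasi-line graph the two parameters $\odd$ and $\pcf$ coincide. Since $\lcc(G)\le 2$ for a quasi-line graph $G$, Theorem~\ref{thm:lcc} immediately yields $\odd(G)\le \frac{3}{2}\D(G)+2$; the entire remaining task therefore reduces to proving $\odd(G)=\pcf(G)$.

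To see this equality, first I would fix an arbitrary proper coloring of $G$ and any vertex $v$ of positive degree. By the definition of $\lcc(G)\le 2$, the neighborhood $N_G(v)$ is the union of two cliques, say $Q_1$ and $Q_2$. In a proper coloring each clique is rainbow, so any fixed color occurs at most once in $Q_1$ and at most once in $Q_2$; hence it appears at most twice on $N_G(v)$. Consequently, no color can appear three or more times on the neighborhood of any vertex.

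Next I would exploit this cap of two to collapse the ``odd'' and ``conflict-free'' conditions into one. For a vertex $v$, a color appears an odd number of times on $N_G(v)$ precisely when it appears exactly once, since $1$ is the only odd value not exceeding $2$. Thus a proper coloring of $G$ is an odd coloring if and only if it is a proper conflict-free coloring, which gives $\odd(G)=\pcf(G)$. Combining this with the bound furnished by Theorem~\ref{thm:lcc} completes the argument.

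There is no serious obstacle here beyond making the clique-cover argument precise; the only point that warrants care is the reduction of the odd-count condition to an exact-once condition. This is exactly the place where the hypothesis $\lcc(G)\le 2$ is essential and where the reasoning would break down for graphs with larger local vertex clique cover number, since then a color could legitimately appear three times on a neighborhood and the two coloring notions would genuinely diverge.
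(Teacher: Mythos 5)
Your proposal is correct and follows essentially the same route as the paper: the authors likewise observe that since $N_G(v)$ is covered by two cliques, no color can appear on three neighbors of a vertex in a proper coloring, so odd and conflict-free coincide, and then they invoke Theorem~\ref{thm:lcc} with $\ell=2$. Your write-up just makes the rainbow-clique step slightly more explicit than the paper does.
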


The family of $K_{1, \ell+1}$-free graphs contains graphs with local vertex clique cover number at most $\ell$.

We extend Theorem~\ref{thm:lcc} to $K_{1, \ell+1}$-free graphs and obtain a slightly weaker bound, but the highest order term is identical. 

\begin{theorem}\label{thm:genclaw}
If $G$ is a  $K_{1,\ell+1}$-free graph, 
then $\odd(G) \le \frac{(2\ell-1)}{\ell}\D(G)  +\left\lceil\sqrt{\frac{2\D(G)}{\ell}+1}\right\rceil+1$.
\end{theorem}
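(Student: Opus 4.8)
The plan is to follow the template used for Theorem~\ref{thm:lcc} and to replace its only genuinely class-specific ingredient. In that argument the hypothesis \(\lcc(G)\le\ell\) is used exactly once, to supply, for every vertex \(v\), a clique \(Q\subseteq N_G(v)\) of size at least \(\deg_G(v)/\ell\): such a clique is automatically rainbow in any proper coloring, so it hosts a cheap ``odd witness'' for \(v\), and the leading coefficient \(\frac{2\ell-1}{\ell}\) is exactly \(1+\frac{\ell-1}{\ell}\), the proper-coloring budget \(\Delta\) plus the \(\bigl(1-\tfrac1\ell\bigr)\Delta\) neighbors that the witness must avoid. For a \(K_{1,\ell+1}\)-free graph we no longer have a clique cover of each neighborhood, but the weaker fact \(\alpha\bigl(G[N_G(v)]\bigr)\le\ell\) will be enough to reproduce the leading term.

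The first step is the structural substitute. Fix \(v\) with \(d:=\deg_G(v)\) and let \(H=G[N_G(v)]\); then \(\overline H\) is \(K_{\ell+1}\)-free, so by Turán's theorem \(\overline H\) has at most \(\bigl(1-\tfrac1\ell\bigr)\tfrac{d^2}{2}\) edges and hence a vertex \(u^\ast\) of \(\overline H\)-degree at most \(\bigl(1-\tfrac1\ell\bigr)d\). Equivalently, \(u^\ast\) is a neighbor of \(v\) adjacent in \(G\) to all but at most \(\bigl(1-\tfrac1\ell\bigr)d\) of the remaining neighbors of \(v\). This \(u^\ast\) plays the role of the witness: forcing its color to differ from the at most \(\bigl(1-\tfrac1\ell\bigr)\Delta\) neighbors of \(v\) it misses makes its color appear exactly once on \(N_G(v)\) (its co-neighbors differ from it already by properness), and this produces the leading term \(\frac{2\ell-1}{\ell}\Delta\), matching Theorem~\ref{thm:lcc}.

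The extra additive \(\bigl\lceil\sqrt{2\Delta/\ell+1}\bigr\rceil\) comes from the gap between ``a vertex adjacent to most of \(N_G(v)\)'' and ``a vertex lying in a genuine clique of \(N_G(v)\)''. Unlike the clique case, a single high-degree neighbor does not by itself guarantee that all the designated witnesses can be kept simultaneously unique, since one vertex may be asked to witness for many centers at once. I would resolve this by reserving a separate palette of witness colors together with a degree threshold \(t\): neighbors lying in cliques of size at least \(t\) are handled by the clique mechanism of Theorem~\ref{thm:lcc}, while the reserved palette of size about \(t\) absorbs the remaining conflicts. Balancing the two error contributions --- one increasing in \(t\) and one of order \(\Delta/(\ell t)\) --- is a quadratic optimization whose optimum sits at \(t\approx\sqrt{2\Delta/\ell}\), which is precisely the source of the square-root correction; the final \(+1\) and the rounding account for integrality and for the color spent on \(u^\ast\) itself.

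The main obstacle is this last balancing step: one must make the witness assignment globally consistent, so that no vertex is overloaded as a witness, while threading the additive error through the coloring so that the leading coefficient stays \(\frac{2\ell-1}{\ell}\) rather than degrading towards \(2\). A secondary technical point is the low-degree regime: the Turán estimate is vacuous when \(d\) is comparable to \(\ell\), so vertices \(v\) with \(\deg_G(v)\) below the threshold must be handled by a direct greedy argument, which is routine since such vertices have few neighbors. Once the structural substitute and the threshold optimization are in place, the remainder of the proof parallels Theorem~\ref{thm:lcc}.
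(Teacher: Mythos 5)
Your structural first step is sound and is essentially the paper's Lemma~\ref{lem:basic} in disguise: taking $H=G[N_G(v)]$ for a maximum-degree vertex $v$, the graph $\overline H$ is $K_{\ell+1}$-free, so some neighbor $u^\ast$ of $v$ satisfies $|N_G(u^\ast)\cap N_G(v)|\ge \D(G)/\ell-1$; the paper extracts the same high-codegree adjacent pair by looking at a maximum independent set of $H$. This correctly accounts for the leading coefficient $\frac{2\ell-1}{\ell}$. (As an aside, your description of the Theorem~\ref{thm:lcc} template is not quite what the paper does: that proof is a minimal-counterexample argument that deletes one large clique and recolors it with a counting argument over clique-colorings, not a per-vertex ``witness'' scheme.)

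The genuine gap is the additive term $\left\lceil\sqrt{2\D(G)/\ell+1}\right\rceil$. Your proposal for it --- a reserved palette of size $t$, a degree threshold, and a balancing of two error contributions --- is never made into an argument: you do not say what the two contributions are, why one is $O(\D/(\ell t))$, or how the reserved colors are deployed, and you explicitly leave unresolved the central difficulty that a single vertex may be the designated witness for many centers. That global consistency problem is exactly what a per-vertex witness scheme must solve, and nothing in the proposal solves it. The paper avoids it entirely by a different mechanism: in a minimal counterexample it deletes the two adjacent vertices $v_1,v_2$ with $K=N_G[v_1]\cap N_G[v_2]$ large, odd-colors $G-\{v_1,v_2\}$, and then recolors only $v_1,v_2$ (Lemma~\ref{lem:bigclique2}). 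Each vertex of $K\setminus\{v_1,v_2\}$ sees both new colors, and its odd color survives unless the unordered pair $\{\vp'(v_1),\vp'(v_2)\}$ equals its set of odd colors; so at most $2(|K|-2)$ of the $|L(v_1)|(|L(v_2)|-1)$ admissible pairs are bad, and lists of size roughly $\sqrt{2|K|}\approx\sqrt{2\D(G)/\ell}$ suffice because the number of pairs grows quadratically in the list size. That quadratic-versus-linear count, not a threshold optimization, is the actual source of the square root, and it is the idea missing from your proposal. (Incidentally, your own balancing $t+\D/(\ell t)$ would optimize to $2\sqrt{\D/\ell}=\sqrt{4\D/\ell}$, which already overshoots the claimed constant.)
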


A $K_{1,3}$-free graph  is {\it claw-free}. 
Note that the family of claw-free graphs contains quasi-line graphs.
For a claw-free graph, we improve upon the previous theorem by shaving off the additive term of $1$. 
 
\begin{theorem}\label{thm:claw}
If $G$ is a claw-free graph,
then $\pcf(G) = \odd(G) \le \frac{3}{2}\D(G)  +\left\lceil\sqrt{\D(G)}\right\rceil$.
\end{theorem}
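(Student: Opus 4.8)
The plan is to prove the identity $\pcf(G)=\odd(G)$ first, since it is what makes the clean bound possible, and then to establish the numerical estimate. For the identity, note that claw-freeness means $G[N_G(v)]$ has independence number at most $2$ for every vertex $v$; equivalently, the complement $\overline{G[N_G(v)]}$ is triangle-free. Hence in any proper coloring of $G$ each color class meets $N_G(v)$ in at most two vertices, so a color appears on $N_G(v)$ an odd number of times exactly when it appears there exactly once. Consequently a proper coloring of a claw-free graph is an odd coloring if and only if it is a proper conflict-free coloring, and therefore $\pcf(G)=\odd(G)$. From here on it suffices to construct an odd coloring with $\tfrac32\D(G)+\lceil\sqrt{\D(G)}\rceil$ colors, and by the equivalence I may read ``odd'' as ``every vertex has a neighbor whose color is unique in its neighborhood''.

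For the bound, I would run the same scheme that underlies Theorem~\ref{thm:genclaw} with $\ell=2$ and then extract the savings that the claw-free setting permits. That specialization already delivers the correct leading term $\tfrac32\D(G)$ together with an additive reservoir of $\lceil\sqrt{\D(G)+1}\rceil+1$ ``uniqueness'' colors: the proper-coloring engine (which specializes the one behind Theorem~\ref{thm:lcc} for $\ell=2$) handles the vast majority of vertices, while the reservoir supplies, for each vertex still lacking a uniquely colored neighbor, a neighbor that can be recolored into a private color. The right order of the reservoir is dictated by the local clique structure: since $\overline{G[N_G(v)]}$ is triangle-free on at most $\D(G)$ vertices it contains an independent set, hence $N_G(v)$ contains a clique, of size at least $\sqrt{\deg_G(v)}$, and this is the source of the square-root-sized reservoir, $\lceil\sqrt{\D(G)}\rceil$ colors being intended to secure a private witness inside such a clique.

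Two claw-free-specific savings then bring the bound down to the claimed $\tfrac32\D(G)+\lceil\sqrt{\D(G)}\rceil$. First, the extra additive $+1$ in Theorem~\ref{thm:genclaw} is spent converting ``some color occurs an odd number of times'' into a usable single-occurrence witness; by the first paragraph this conversion is automatic when $\ell=2$, so the $+1$ can be dropped. Second, the $+1$ under the square root, which comes from a uniform Ramsey-type estimate valid for all $\ell$, can be shaved because for $\ell=2$ the relevant quantity is precisely an independent-set size in a triangle-free graph on at most $\D(G)$ vertices, giving $\lceil\sqrt{\D(G)}\rceil$ directly. I expect the genuine difficulty to lie not in either local observation but in the global bookkeeping of the reservoir: the private colors must be assigned so that recoloring a neighbor of one bad vertex neither breaks properness nor destroys a unique color already secured at another vertex, and this must be achieved with a palette of size exactly $\lceil\sqrt{\D(G)}\rceil$. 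Concretely, the private colors have to properly color an auxiliary conflict graph on the recolored vertices whose maximum degree is controlled by the triangle-free neighborhood estimate; verifying that this auxiliary graph really is that sparse, and that the $\tfrac12\D(G)$ slack in the leading term absorbs all remaining obstructions without an additional additive constant, is the crux.
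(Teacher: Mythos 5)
Your first paragraph is correct and matches the paper's (implicit) reasoning: claw-freeness forces every color class to meet each neighborhood in at most two vertices, so ``appears an odd number of times'' and ``appears exactly once'' coincide and $\pcf(G)=\odd(G)$.

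The numerical bound, however, is not proved. Your plan rests on a ``reservoir'' of $\left\lceil\sqrt{\D(G)}\right\rceil$ private colors used to recolor one neighbor of each vertex that lacks a unique witness, and you yourself flag the crux --- that the auxiliary conflict graph on the recolored vertices is sparse enough to be properly colored from this reservoir without breaking properness or destroying unique colors already secured elsewhere --- as unverified. Nothing in the proposal supplies that verification, and it is not routine: a single recoloring can simultaneously ruin the unique witness at up to $\D(G)$ other vertices, so the required sparsity is far from automatic. The paper takes a different route that avoids any recoloring. It considers a minimal counterexample and, via Lemma~\ref{lem:basic}, finds adjacent vertices $v_1,v_2$ with $K=N_G[v_1]\cap N_G[v_2]$ of size at least $\left\lceil \D/2\right\rceil+1$; if $|K|=\left\lceil \D/2\right\rceil+1$, the ``moreover'' clause yields a clique $Q$ of size $\lfloor \D/2\rfloor+1$ and Lemma~\ref{lem:bigclique} extends an odd coloring of $G-Q$, while if $|K|\ge\left\lceil \D/2\right\rceil+2$ it extends an odd coloring of $G-\{v_1,v_2\}$ by coloring only $v_1,v_2$ via Lemma~\ref{lem:bigclique2}. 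In that lemma the square root arises from requiring $(m-2\D+|K|-1)(m-2\D+|K|-2)>2(|K|-2)$ admissible color pairs for $(v_1,v_2)$, not from a Ramsey-type independent-set bound in triangle-free neighborhoods; and the savings over Theorem~\ref{thm:genclaw} come from this two-case split, not from dropping an ``odd-to-unique conversion'' cost (Theorem~\ref{thm:genclaw} already bounds $\odd$, so no such cost exists there). Hence your account of where the two $+1$'s are shaved is not substantiated, and the proposal as written does not establish the stated inequality.
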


Note that Corollary~\ref{cor:ql} and Theorem~\ref{thm:claw} are tight by a $5$-cycle. 

We prove Theorem~\ref{thm:pcf:h-ver2} in Section~\ref{sec:main},
and the proofs of theorems regarding graph classes are presented in Section~\ref{sec:lcc}.
Our proof idea is mostly based on providing a good vertex ordering of a graph, and then showing that an almost greedy coloring algorithm can be utilized to obtain the desired coloring. 
 For a (partial) proper  coloring $\vp$ of a graph $G$, and a vertex $v \in V(G)$, let $\uc(v)$ be a set of $h_v$ unique colors in the neighborhood of $v$ if it exists. 
 We say ``a vertex $v$ has $m$ unique colors under $\vp$'' if there are $m$ unique colors in the neighborhood of $v$ under $\vp$.

 \section{Proof of Theorem~\ref{thm:pcf:h-ver2}}\label{sec:main}

In this section we prove Theorem~\ref{thm:pcf:h-ver2}, whose proof is split into two subsections, subsections~\ref{subsec:largeh} and
~\ref{subsec:smallh}, depending on the maximum degree of the graph. 
Let $G$ be a counterexample to Theorem~\ref{thm:pcf:h-ver2} where $|V(G)|+|E(G)|$ is minimum.
We abuse notation and let $\D$ denote the maximum degree of $G$. 
Let $\mathcal{C}$ be the set of $(h+1)\D-1$ colors for a (partial) proper $((h+1)\D-1)$-coloring  of $G$.

\subsection{The case when $h = \D-2$}\label{subsec:largeh}

We will prove Theorem~\ref{thm:pcf:h-ver2} when $h=\D-2$.
It is known that $\pcf^1(G) \le 4$ when $\D=3$ (see~\cite{CARO2023113221}), so we may assume $\D \geq 4$.  
Note that $\D+(\D-1)(\D-2)\leq \D^2-\D-2<(h+1)\D-1$ since $\D\geq 4$.

Let $\vp$ be a partial proper  $(\D^2-\D-1)$-coloring of $G$.
For a vertex $x$ with an uncolored neighbor, let 
$$F_{\vp}(x)=\begin{cases}
  \uc(x) &\text{if $x$ has $h$ colored neighbors,}\\
   \vp(N_G(x))  &\text{otherwise.}\\
\end{cases}$$
Note that $F_{\vp}(x)\subseteq \vp(N_G(x))$, and  $|F_{\vp}(x)|\le \D-2$. 
For an uncolored vertex $y$ and a subset $S\subseteq N_G(y)$, let 
$B_\vp(y; S)=\vp(N_G(y)) \cup \left( \bigcup_{x\in N_G(y)\setminus S} F_{\vp}(x) \right)$,
so
\begin{eqnarray}\label{eq:h=D-2}
|B_{\vp}(y; S)|&\le& |\vp(N_G(y))|+(\deg_G(y)-|S|)(\D-2).
\end{eqnarray}
Fix a vertex $u$ of degree $\D$.
Let $v$ be a vertex with maximum degree among the neighbors of $u$, and let $H=G-uv$. 
By the minimality of $G$,  $H$ has a proper $h$-CF $(\D^2-\D-1)$-coloring $\vp$.

Suppose that $\deg_G(v)=\D$. 
Note that each of $u$ and $v$ has $\D-1=h+1$ neighbors in $H$ so it has $h+1$ unique colors.
Remove the color on $v$, and continue using $\vp$ as the resulting partial coloring.
Since $|B_\vp(v; \{u\})|\le \D + (\D-1)(\D-2) < \D^2-\D-1$, there is a color $c$ not in $B_\vp(v; \{u\})$.
Now, recolor $v$ with $c$ to obtain a proper $h$-CF $(\D^2-\D-1)$-coloring of $G$, which is a contradiction.

Suppose that $\deg_G(v)\le\D-1$.   
Let $w$ be a neighbor of $u$ other than $v$, so  $\deg_G(w)\le \D-1$.
Now, remove the colors on $u$, $v$, and $w$, and continue using $\vp$ as the resulting coloring.
Then sequentially recolor $u$, $v$, and $w$ with a color in $\mathcal{C}\setminus B_\vp(x; \emptyset)$ for each $x \in \{u, v, w\}$, where each extension is still denoted by $\vp$. 
Note that this is possible since by \eqref{eq:h=D-2}, $|B_\vp(x; \emptyset)| \leq\max\{(\D-2) + \D(\D-2), (\D-1)+(\D-1)(\D-2)\}<\D^2-\D-1$ for each $x\in\{u,v,w\}$.
The resulting coloring is a proper $h$-CF $(\D^2-\D-1)$-coloring of $G$, which is a contradiction.

\subsection{The case when $h\le \D-3$}\label{subsec:smallh}

We will prove Theorem~\ref{thm:pcf:h-ver2} when $h\leq \D-3$.
The main idea is simple: except for the vertices on a minimum length cycle $E_0$, we can color all vertices according to a somewhat natural ordering.
Finishing the coloring by coloring the vertices on $E_0$ is trickier, and requires some in-depth analysis. 

We begin with showing that our minimum counterexample $G$ must be $2$-edge-connected. 
For a (partial) coloring $\vp$ of $G$, let $V(\vp)$ be the set of vertices that are colored under $\vp$.

\begin{lemma}\label{claim:pcf:2edgeconnected}
$G$ is $2$-edge-connected. 
\end{lemma}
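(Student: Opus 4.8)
The plan is to prove Lemma~\ref{claim:pcf:2edgeconnected} by contradiction, leveraging the minimality of the counterexample $G$. Suppose $G$ is not $2$-edge-connected, so either $G$ is disconnected or $G$ has a bridge $e=xy$. First I would dispose of the disconnected case quickly: if $G$ has components $G_1,\ldots,G_t$ with $t\ge 2$, then each $G_i$ has strictly fewer vertices and edges, so by minimality each admits a proper $h$-CF $((h+1)\D-1)$-coloring (using that $\D(G_i)\le\D$; if some component has smaller maximum degree one must check the theorem's hypothesis $\D(G_i)\ge h+2$ still holds or handle the low-degree component separately, which is the kind of edge case to watch). Since the $h$-CF condition at each vertex only involves its own neighborhood, these colorings combine into a proper $h$-CF coloring of $G$, contradicting that $G$ is a counterexample.

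The substantive case is a bridge $e=xy$. The idea is to split $G$ at the bridge into the two sides $G_x$ and $G_y$ (the components of $G-e$ containing $x$ and $y$ respectively), and color each side by minimality, then reconcile the two partial colorings along the bridge. I would apply minimality to $H_x=G_x$ and $H_y=G_y$, or perhaps to $G-e$, to obtain proper $h$-CF colorings $\vp_x,\vp_y$ of the two sides using the palette $\mathcal{C}$ of $(h+1)\D-1$ colors. The deleted edge $xy$ removes one neighbor from each of $x$ and $y$, so in $G_x$ the vertex $x$ has degree $\deg_G(x)-1$, and similarly for $y$; I must re-examine and possibly recolor only $x$ and $y$ so that the full edge $xy$ is properly colored (i.e.\ $\vp(x)\ne\vp(y)$) and so that $x$ and $y$ each regain the required $h_v$ unique colors on their full $G$-neighborhoods. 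The key counting tool is exactly the bound in~\eqref{eq:h=D-2}-style estimates: the number of forbidden colors when recoloring a single vertex $v$ is controlled by $\vp(N_G(v))$ together with the sets $F_\vp$ of its neighbors, and since $(h+1)\D-1$ exceeds these counts, a valid color exists.

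The main obstacle I anticipate is the interaction at the two bridge endpoints. Recoloring $x$ to make the bridge proper may destroy a unique color that some neighbor of $x$ (inside $G_x$) was relying on, and symmetrically for $y$; moreover we must guarantee $x$ itself and $y$ itself each still see $h_v$ uniquely-occurring colors once the bridge neighbor is reinstated. The cleanest route is probably to uncolor $x$ and $y$ (and perhaps one additional neighbor on each side, mirroring the three-vertex recoloring trick used in Section~\ref{subsec:largeh}) and recolor them sequentially, each time choosing a color outside the appropriate forbidden set $B_\vp(\,\cdot\,;\emptyset)$. Because each such vertex has degree at most $\D$, the forbidden set has size strictly less than $(h+1)\D-1$ whenever $\D\ge h+2$, so a greedy choice succeeds; the careful bookkeeping is to verify that after reinstating the bridge, the unique-color requirement holds at both $x$ and $y$ simultaneously, which is where the precise margin between the forbidden-set bound and the palette size matters most.

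A subtlety worth flagging is whether one should delete the bridge and treat $G-e$ as a single smaller graph (applying minimality once) or treat the two sides independently; the independent-sides approach gives more freedom in choosing $\vp(x)$ and $\vp(y)$ to avoid conflict across the bridge, so I would favor it, but one then needs each side to satisfy the maximum-degree hypothesis of Theorem~\ref{thm:pcf:h-ver2}, and the low-degree boundary cases (where a side has maximum degree below $h+2$) must be argued directly, likely by a direct greedy argument rather than by invoking minimality.
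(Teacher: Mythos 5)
Your overall decomposition (split $G$ at the cut-edge into the two components of $G-v_1v_2$ and color each by minimality) matches the paper's, but the reconciliation step is where you diverge, and your route has a gap that the paper's does not. The paper never recolors any vertex: it \emph{permutes the colors on all of $V(G_2)$} so that $T_{\vp_1}(v_1)\cup\{\vp_1(v_1)\}$ and $T_{\vp_2}(v_2)\cup\{\vp_2(v_2)\}$ become disjoint, where $T_{\vp_i}(v_i)$ is a set of $\min\{\deg_{G_i}(v_i),h\}$ unique colors at $v_i$. A global permutation preserves the proper $h$-CF property inside $G_2$ verbatim; the disjointness makes the reinstated edge proper, keeps each endpoint's unique colors unique, and supplies the one extra unique color an endpoint may need because its degree dropped by one in its component. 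The only count required is $2(h+1)\le(h+1)\D-1$, which is trivial.

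Your plan instead uncolors $v_1$ and $v_2$ (and perhaps more) and recolors them greedily outside forbidden sets. The problem is exactly the margin you flag but do not resolve: recoloring a single vertex $v$ must avoid $\vp(N_G(v))$ (up to $\D-1$ colors while the bridge partner is uncolored) plus up to $h$ protected unique colors for each of up to $\D$ neighbors whose unique-color count is tight, i.e.\ up to $(h+1)\D-1$ colors in total --- the entire palette, with no room left. Extracting the needed slack for general $h$ is precisely what forces the risky-neighbor and available-color machinery of Section~\ref{subsec:smallh} (Lemma~\ref{lem:uncolored} and its case analysis), and the three-vertex recoloring trick of Section~\ref{subsec:largeh} relies on $h=\D-2$ and does not transfer. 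So as written your bridge-case argument does not close; the missing idea is to rename colors wholesale on one side rather than recolor vertices. (Your concern about a component having maximum degree below $h+2$ is fair, but it is equally present in the paper's proof and is orthogonal to the main issue.)
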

\begin{proof} 
Suppose to the contrary that $G$ has a cut-edge $v_1v_2$.
Let $G_i$ be the component of $G-v_1v_2$ containing $v_i$ for $i\in[2]$.
Since $\D(G_i)\le \D$ for each $i\in[2]$, by the minimality of $G$, $G_i$ has a proper $h$-CF $((h+1)\D-1)$-coloring $\vp_i$.
Let $T_{\vp_i}(v_i)$ be a set of $\min\{|N_G(v_i)\cap V(\vp_i)|, h\}$  unique colors in the neighborhood of $v_i$ under $\vp_i$ for each $i\in[2]$.
Now permute the colors on $V(G_2)$ under $\vp_2$ to obtain a coloring $\vp$ of all vertices of $G$ so that $T_{\vp_1}(v_1)\cup \{\vp_1(v_1)\}$ and $T_{\vp_2}(v_2)\cup \{\vp_2(v_2)\}$ are disjoint; this is possible since $|T_{\vp_1}(v_1)|+1+|T_{\vp_2}(v_2)|+1\le 2(h+1) \le (h+1)\D-1$.
Thus, $\vp$ is a proper $h$-CF $((h+1)\D-1)$-coloring of $G$, which is a contradiction. Hence, $G$ is $2$-edge-connected.
\end{proof}

It is well-known that a graph with no cut-edge has an ear decomposition.
We lay out the related definitions for completeness of this article. 
An {\it ear} is a graph that is either a path or a cycle. 
The {\it length} of an ear is the number of edges on it, and a {\it trivial ear} is an ear of length $1$. 
An {\it ear decomposition} of a graph $G$ is a sequence $\mathcal{E}: E_0,E_1,\ldots,E_m$ of ears, where the edge sets of $E_i$'s form a partition of $E(G)$ and $E_0$ is a cycle, which we call the \textit{initial cycle}, and for every $i\in[m]$, $E_i$ is an ear such that $V(E_0\cup E_1\cup \cdots \cup E_{i-1})\cap V(E_i)$ consists of a single vertex and the two endpoints of $E_i$ if $E_i$ is a cycle and a path, respectively. 
The {\it internal vertices} of an ear $E_i$ are the vertices on $E_i$ but not on $E_0\cup E_1\cup\cdots\cup E_{i-1}$.  

\begin{theorem}[\cite{roger1978}]\label{thm:ear}
If a graph is $2$-edge-connected, then an ear decomposition with an arbitrary cycle as the initial ear exists.
\end{theorem}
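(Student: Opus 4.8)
The plan is to prove the statement by a maximality argument. Since we are handed an arbitrary cycle $E_0$ of $G$ to serve as the initial ear, it suffices to show that the trivial partial ear decomposition consisting of $E_0$ alone can be grown until it exhausts $G$. To this end, I would consider the family of all subgraphs $H\subseteq G$ with $E_0\subseteq H$ that admit an ear decomposition having $E_0$ as initial cycle; this family is nonempty, as $E_0$ itself qualifies, so I may select an $H$ maximizing $|E(H)|$. The goal is then to show that $E(H)=E(G)$, which forces $H=G$ because a $2$-edge-connected graph containing a cycle has minimum degree at least $2$, so every vertex is incident to an edge and hence lies in $V(H)$ once all edges are present.

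Suppose toward a contradiction that $E(H)\neq E(G)$. First I would argue that there is an edge $e=xy\in E(G)\setminus E(H)$ with $x\in V(H)$: if every edge outside $E(H)$ avoided $V(H)$, then no edge would cross the cut between $V(H)$ and $V(G)\setminus V(H)$, since edges of $E(H)$ lie entirely inside $V(H)$, and connectivity of $G$ would then force $V(H)=V(G)$ and hence $E(H)=E(G)$, a contradiction. Having fixed such an edge $e=xy$, I split into two cases according to whether its other endpoint $y$ lies in $V(H)$.

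If $y\in V(H)$, then $e$ is a trivial ear---a path of length $1$ whose two endpoints $x,y$ both lie in $V(H)$---so appending $e$ to the decomposition of $H$ yields a larger subgraph with a valid ear decomposition, contradicting the maximality of $|E(H)|$. If $y\notin V(H)$, then I invoke $2$-edge-connectivity: the graph $G-e$ is connected, so it contains a path from $y$ to $V(H)$; letting $z$ be the first vertex of this path that lies in $V(H)$ and truncating there gives a subpath $Q'$ from $y$ to $z$ whose internal vertices all avoid $V(H)$. Prepending the edge $e$ to $Q'$ produces a walk $P$ from $x$ to $z$ that is a closed ear based at $x$ when $z=x$ (of length at least $3$, since $G$ is simple and $e$ is deleted in $G-e$), and an open ear with distinct endpoints $x,z$ otherwise. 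In either case every edge of $P$ lies outside $E(H)$, because each such edge is incident to a vertex outside $V(H)$ (the subpath $Q'$ meets $V(H)$ only at $z$), and the internal vertices of $P$ are exactly the new vertices; hence $P$ is a legitimate next ear, and adjoining it to $H$ again contradicts maximality. Both cases being impossible, we conclude $H=G$, so $G$ has the desired ear decomposition.

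The step I expect to be the crux is the case $y\notin V(H)$, where one must manufacture an entire ear rather than a single edge, and where the hypothesis is used in an essential way: deleting $e$ keeps $G$ connected precisely because $G$ is bridgeless, and this is exactly what guarantees a return path from $y$ back to $H$ along fresh edges. The remaining work---checking that no edge of $P$ already belongs to $E(H)$, which follows from $Q'$ meeting $V(H)$ only at its terminal vertex, and correctly separating the closed-ear subcase $x=z$ from the open-ear subcase---is routine but necessary bookkeeping that must be matched against the precise definition of an ear decomposition.
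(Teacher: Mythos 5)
Your argument is correct, and it is the standard maximality proof of the ear-decomposition theorem (take $H\supseteq E_0$ edge-maximal among subgraphs admitting an ear decomposition rooted at $E_0$, then either absorb a trivial ear or use bridgelessness of $G$ to route a fresh path from a new vertex back into $V(H)$ and adjoin it as an open or closed ear). Note that the paper itself offers no proof of this statement --- it is quoted as a classical result with a citation --- so there is nothing to compare against; your write-up correctly handles the points that need care, namely that the truncated path $Q'$ meets $V(H)$ only at its terminal vertex (so the new ear intersects the old subgraph exactly as the paper's definition requires, in one vertex for a closed ear and in its two endpoints for an open ear) and that every edge of the new ear lies outside $E(H)$, preserving the partition condition.
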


Using an ear decomposition, we will show that $G$ has a certain ordering of the vertices that will be key in the proof.

\begin{lemma}\label{lemma:pcf:sequence}  
There is a sequence $\sigma:v_1,\ldots,v_n$ of  all vertices of $G$ such that
\begin{itemize}
    \item[(i)] $|N_G(v_i)\cap\{v_{i+1},\ldots,v_n\}|\geq 1$ for each $i\in[n-1]$,
    \item[(ii)] $|N_G(v_i)\cap\{v_{i-1},\ldots,v_n\}|\geq 2$ for each $i\in[n-1]\setminus\{1\}$, and
    \item[(iii)] there exists an $\ell\in [n-2]$ such that $v_{\ell},\ldots, v_n$ form a shortest cycle of $G$ and $v_n$ is a vertex with minimum degree among all vertices on shortest cycles of $G$.
\end{itemize}
\end{lemma}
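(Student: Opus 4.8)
The plan is to build the sequence $\sigma$ by working backwards, starting from a shortest cycle and then peeling off an ear decomposition in reverse order. First I would fix a shortest cycle $C$ of $G$ and choose $v_n$ on $C$ to be a vertex of minimum degree among all vertices lying on shortest cycles; this immediately takes care of the second half of condition (iii). By Theorem~\ref{thm:ear}, since $G$ is $2$-edge-connected (Lemma~\ref{claim:pcf:2edgeconnected}), I can pick an ear decomposition $\mathcal{E}: E_0, E_1, \ldots, E_m$ whose initial cycle is $E_0 = C$. The idea is to order the vertices so that vertices appearing in later ears come earlier in $\sigma$, and within the initial cycle $E_0$ we place its vertices last so that $v_\ell, \ldots, v_n$ trace out $C$.

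The key step is to define the ordering by processing the ears from $E_m$ down to $E_0$. For each ear $E_i$ with $i \ge 1$, I list its internal vertices (those not already lying on an earlier ear) consecutively, in the order they appear along the ear from one endpoint to the other; crucially I list them so that, as we read $\sigma$ from left to right, each internal vertex of $E_i$ has at least one neighbor still to its right along the same ear (this gives (i)) and, except possibly for a boundary vertex, at least two neighbors among the vertices weakly to its right (this gives (ii)). Concretely, an internal vertex of a path-ear $E_i$ has its two ear-neighbors, and since both endpoints of $E_i$ lie on earlier ears (hence are placed later in $\sigma$), both of an internal vertex's ear-neighbors can be arranged to fall to its right except for the direction along the ear; by orienting the ear consistently one neighbor is guaranteed to the right, and the second neighbor requirement of (ii) is met because the internal vertex sits between two vertices on $E_i$ that both come later. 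Finally I place the vertices of $E_0 = C$ at the very end as $v_\ell, \ldots, v_n$, arranged cyclically so that $v_n$ is the designated minimum-degree vertex and consecutive indices are adjacent on $C$; each such $v_t$ with $t < n$ then has its two cycle-neighbors, at least one of which lies to its right, and in fact the cyclic structure supplies the two-neighbor condition (ii) for the interior of this block.

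\textbf{Where the obstacle lies.} The delicate point is verifying conditions (i) and (ii) simultaneously at the \emph{junctions} between ears and at the endpoints of each ear. Condition (i) is the mild requirement that every non-final vertex has a later neighbor, which the ear structure makes almost automatic as long as a vertex is not genuinely last. Condition (ii), demanding \emph{two} later-or-current neighbors for every vertex except $v_1$, is the real constraint: a shared endpoint of an ear, or an internal vertex adjacent to an endpoint that happens to be placed earlier, could be short one neighbor. The main work will be to argue that because every vertex other than the starting point of the whole process has degree at least $2$ and participates in the $2$-edge-connected structure, one can always orient each ear and sequence the ears so that the ``second neighbor'' survives to the right; the single permitted exception is absorbed into $v_1$. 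I would handle this by a careful induction on the number of ears, maintaining as an invariant that every vertex placed so far (reading from the right) satisfies (i) and (ii), and checking that prepending the internal vertices of the next ear preserves the invariant, with the only vertex ever failing (ii) being the global first vertex $v_1$.

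Having established the ordering, conditions (i), (ii), and (iii) follow by the construction, so $\sigma$ is the desired sequence.
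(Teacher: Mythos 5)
Your construction is essentially the paper's own proof: take an ear decomposition whose initial cycle $E_0$ is the chosen shortest cycle, list the internal vertices of the ears in the order $\sigma_m,\sigma_{m-1},\ldots,\sigma_0$, and place the minimum-degree vertex of $E_0$ last. The junction difficulty you flag dissolves immediately: for any internal vertex $v_i$ of an ear $E_k$, each of its two ear-neighbors $x$ and $y$ either also belongs to $\sigma_k$ (hence is $v_{i-1}$ or $v_{i+1}$) or lies on an ear $E_{k'}$ with $k'<k$ and is therefore placed \emph{later} in $\sigma$; since $x\neq y$, conditions (i) and (ii) follow at once, with the only unguaranteed case being the harmless one at $v_1$.
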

\begin{proof} 
Since $G$ is $2$-edge-connected by Lemma~\ref{claim:pcf:2edgeconnected}, $G$ has an ear decomposition $\mathcal{E}:E_0,E_1,\ldots,E_m$  where $E_0$ is chosen to be a shortest cycle which contains a vertex with minimum degree among all shortest cycles of $G$; this is possible by Theorem~\ref{thm:ear}. 

For a non-trivial ear $E_i$ where $i\in[m]$,  let $\sigma_i$ be an ordering of the internal vertices of $E_i$ according to a natural ordering on $E_i$, and let $\sigma_0$ be an ordering of the vertices on $E_0$ according to a natural ordering on $E_0$ such that the last vertex of $\sigma_0$ has minimum degree among the vertices of $E_0$.
Now, consider the sequence $\sigma:v_1,\ldots,v_n$ of the vertices of $G$ induced from the ordering $\sigma_m,\sigma_{m-1},\ldots,\sigma_0$ (ignore $\sigma_i$ if it is not defined). 
By the choice of $E_0$, (iii) holds.

For $i\in[n-1]$, let $v_i$ belong to $\sigma_k$ for some $k\in\{0,\ldots,m\}$, and let $xv_iy$ be a subpath of $E_k$ where $x$ comes before $v_i$ or $y$ comes after $v_i$ on $\sigma_k$. 
Note that $x\neq y$. 
If $v_i$ is not the last vertex of $\sigma_k$, then $y$ is also a vertex of $\sigma_k$, so $y=v_{i+1}$.
If $v_i$ is the last vertex of $\sigma_k$, then $y$ belongs to $\sigma_{k'}$ for some $k'<k$, so $y\in\{v_{i+1}, \ldots, v_n\}$.
(Note that $i\neq n$.)
Thus, (i) holds.

For $i\in[n-1]\setminus\{1\}$, if $v_i$ is not the first vertex of $\sigma_k$, then $x$ is also a vertex of $\sigma_k$, so $x=v_{i-1}$.
If $v_i$ is the first vertex of $\sigma_k$, then $x$  belongs to $\sigma_{k'}$ for some $k'<k$, so $x\in \{v_{i+1},\ldots,v_n\}$.
Since $y\in \{v_{i+1},\ldots,v_n\}$ and $x\neq y$, (ii) holds.
\end{proof}

We now define important notation that will be used throughout the proof. 
Let $\vp$ be a partial proper coloring of $G$ using the color set $\mathcal C$.
A vertex $v$ is \textit{$\vp$-good} if either $v$ has $h$ unique colors under $\vp$ or all colored neighbors of $v$ have distinct colors.
We say $\vp$ is \textit{good} if every vertex is $\vp$-good.
For a partial good coloring $\vp$ of $G$ and an uncolored vertex $v$, the set of {\it $\vp$-available colors} of $v$, denoted $A_{\vp}(v)$, is the set of colors $c$ not in $\vp(N_G(v))$ such that the extension of $\vp$ to $v$ with $c$ is also good.  
A vertex $u$ is {\it $\vp$-risky} if $u$ has both an uncolored neighbor and at most $h$ unique colors under $\vp$.
In addition, let $\RN_{\vp}(v)$ be the set of $\vp$-risky neighbors of $v$, and let the set of \textit{$\vp$-forbidden colors} of $v$, denoted $\FC_{\vp}(v)$,  be defined as 
\[ \FC_{\vp}(v): =\vp(N_G(v)) \cup \left( \bigcup_{ u\in\RN_{\vp}(v)}\{\mbox{unique colors in   the neighborhood of $u$ under $\vp$}\} \right).\]
Note that if we can extend $\vp$ to a vertex $v$ with a color that is not a $\vp$-forbidden color of $v$, then $\vp$ is still good. 
By definition,
\begin{eqnarray}\label{eq:A}
A_{\vp}(v)&=& \mathcal{C} \setminus \FC_{\vp}(v).
\end{eqnarray} 
Note that if a vertex $v$ has no $\vp$-available color, then $\FC_{\vp}(v)=\mathcal{C}$.

We now prove important lemmas regarding the above notions that will be frequently used in the proof. 
By \eqref{eq:A}, if $v$ is an uncolored vertex, then  \begin{eqnarray}\label{eq:A:claim}
&&|A_{\vp}(v)|\ge  (\D-|\vp(N_G(v))|) +  h( \D-|\RN_{\vp}(v)|)-1,
\end{eqnarray} 
\noindent
since $(h+1)\D-1-|\vp(N_G(v))| - h|\RN_{\vp}(v)| =(\D-|\vp(N_G(v))|) +  h( \D-|\RN_{\vp}(v)|)-1$.
The following lemma is an easy consequence of \eqref{eq:A:claim}, but we explicitly state it as a lemma as we will refer to it often. 
 
\begin{lemma}\label{lem:uncolored} 
If an uncolored vertex $v$ has no $\vp$-available color, then 
\begin{enumerate}[(i)]
    \item $\deg_G(v)=\D$,
    \item $|\vp(N_G(v))|\ge \D-1$,
    \item $|\RN_{\vp}(v)|\ge \D-1$, and \item $|\vp(N_G(v))|+|\RN_{\vp}(v)|\ge 2\D-1$.
\end{enumerate}
\end{lemma}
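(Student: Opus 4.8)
The plan is to translate the hypothesis ``$v$ has no $\vp$-available color'' into the single quantitative statement $|A_{\vp}(v)|=0$ (equivalently, by the remark preceding the lemma, $\FC_{\vp}(v)=\mathcal C$), and then feed this into the counting estimate \eqref{eq:A:claim} that was established immediately above the lemma. Writing for brevity $a=\D-|\vp(N_G(v))|$ and $b=\D-|\RN_{\vp}(v)|$, inequality \eqref{eq:A:claim} combined with $|A_{\vp}(v)|=0$ yields $a+hb\le 1$. Essentially all four conclusions are then forced by this one inequality, so the whole proof reduces to elementary arithmetic once two boundedness facts are recorded.

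The structural facts I would note first are that $a$ and $b$ are nonnegative integers: since $\vp(N_G(v))$ is a set of colors appearing on neighbors of $v$ and $\RN_{\vp}(v)\subseteq N_G(v)$, both $|\vp(N_G(v))|$ and $|\RN_{\vp}(v)|$ are at most $\deg_G(v)\le \D$, so $a\ge 0$ and $b\ge 0$; together with $h\ge 1$ this makes $a+hb\le 1$ extremely restrictive. For part (iv), I would use $h\ge 1$ and $b\ge 0$ to get $hb\ge b$, whence $a+b\le a+hb\le 1$; unpacking $a$ and $b$ gives $2\D-\bigl(|\vp(N_G(v))|+|\RN_{\vp}(v)|\bigr)\le 1$, which is exactly $|\vp(N_G(v))|+|\RN_{\vp}(v)|\ge 2\D-1$. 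For part (ii), I would drop the nonnegative term $hb$ to obtain $a\le 1$, i.e.\ $|\vp(N_G(v))|\ge \D-1$. For part (iii), I would instead drop the nonnegative term $a$ to obtain $hb\le 1$; since $h\ge 1$ this forces $b\le 1$, i.e.\ $|\RN_{\vp}(v)|\ge \D-1$. Finally, part (i) follows from (iv): since $|\vp(N_G(v))|$ and $|\RN_{\vp}(v)|$ are both at most $\deg_G(v)$, part (iv) gives $2\deg_G(v)\ge 2\D-1$, so $\deg_G(v)\ge \D-\frac12$, and integrality forces $\deg_G(v)=\D$.

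I do not expect any genuine obstacle here; the substantive content is already encapsulated in the bound \eqref{eq:A:claim}, and this lemma merely reads off its consequences in the degenerate case $|A_{\vp}(v)|=0$. The only care required is to establish the two inequalities $|\vp(N_G(v))|\le \deg_G(v)\le \D$ and $|\RN_{\vp}(v)|\le \deg_G(v)\le \D$ explicitly, and to exploit $h\ge 1$ in the three different ways demanded by the four conclusions (keeping both terms for (iv), discarding the second for (ii), and discarding the first for (iii)).
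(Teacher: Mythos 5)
Your proof is correct and rests on exactly the same ingredient as the paper's: the counting bound \eqref{eq:A:claim} together with the trivial facts $|\vp(N_G(v))|\le\deg_G(v)\le\D$ and $|\RN_{\vp}(v)|\le\deg_G(v)\le\D$. The paper phrases it as a four-case contrapositive check while you unpack the single inequality $a+hb\le 1$, but these are the same argument.
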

\begin{proof}
In all cases, we will obtain that $v$ has a $\vp$-available color, which is a contradiction. 
If $\deg_G(v)\le \D-1$, then $|A_{\vp}(v)|\ge h$ by \eqref{eq:A:claim}.
If $|\vp(N_G(v))|\le \D-2$, then $|A_{\vp}(v)|\ge 1$ by \eqref{eq:A:claim}.
If $|\RN_{\vp}(v)|\le \D-2$, then $|A_{\vp}(v)|\ge 2h-1$ by \eqref{eq:A:claim}. 
If $|\vp(N_G(v))|=|\RN_{\vp}(v)|=\D-1$,  then $|A_{\vp}(v)|\ge h$ by \eqref{eq:A:claim}.
Thus, the lemma holds. 
\end{proof}

For an uncolored vertex $v$ with a $\vp$-available color $c$, an extension $\vp'$ of $\vp$ obtained by coloring  $v$ with $c$ is a {\it good extension} of $\vp$ to $v$ with $c$. 
For a partial good  coloring $\vp$ of $G$ and two adjacent uncolored vertices $u$ and $v$, we say a common neighbor $w$ of $u$ and $v$ is \textit{$\vp$-special} if $w$ has exactly $h+1$ unique colors under $\vp$.

\begin{lemma}\label{lem:nospecial}
For $h\le \D-2$, let  $\vp$ be a partial good $((h+1)\D-1)$-coloring of $G$.
Suppose that two adjacent uncolored vertices $u$ and $v$ have no $\vp$-special neighbor. 
If $\vp'$ is a good extension of $\vp$ to $v$, then $A_{\vp'}(u)\supseteq A_{\vp}(u)\setminus \{ \vp'(v)\}$.
\end{lemma}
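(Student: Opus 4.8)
Looking at Lemma~\ref{lem:nospecial}, I need to prove that extending a good coloring $\vp$ to one vertex $v$ of an adjacent uncolored pair $\{u,v\}$ (with no $\vp$-special common neighbor) does not shrink the available-color set of $u$ beyond removing the single color $\vp'(v)$.

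\begin{proof}[Proof plan]
The plan is to unpack the definition $A_{\vp'}(u)=\mathcal{C}\setminus\FC_{\vp'}(u)$ from~\eqref{eq:A} and show that $\FC_{\vp'}(u)\subseteq \FC_{\vp}(u)\cup\{\vp'(v)\}$, which is logically equivalent to the claimed containment. Since $\FC_{\vp'}(u)=\vp'(N_G(u))\cup\bigl(\bigcup_{w\in\RN_{\vp'}(u)}\{\text{unique colors at }w\text{ under }\vp'\}\bigr)$, I would bound the two pieces separately. The first piece is easy: coloring $v$ adds at most the single color $\vp'(v)$ to $\vp(N_G(u))$, since $v\in N_G(u)$, so $\vp'(N_G(u))\subseteq \vp(N_G(u))\cup\{\vp'(v)\}$.

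The crux is the second piece, controlling how the risky-neighbor contributions change. First I would compare $\RN_{\vp'}(u)$ with $\RN_{\vp}(u)$: coloring $v$ can only \emph{remove} vertices from the risky set, never add them, because the only vertex whose colored-neighbor count increases is $v$ (already uncolored, hence not contributing) and its common neighbors with $u$. A vertex $w$ is risky if it has an uncolored neighbor and at most $h$ unique colors; coloring $v$ cannot create a new uncolored neighbor for any $w$, and it can only increase the count of unique colors at a neighbor $w$ of $v$. So the risky set cannot grow, and for any $w$ that remains risky under $\vp'$ but was \emph{not} a neighbor of $v$, its neighborhood coloring is unchanged, so its unique-color set is identical under $\vp$ and $\vp'$ and was already counted in $\FC_{\vp}(u)$.

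The remaining danger is a vertex $w$ that is a common neighbor of $u$ and $v$ (so $\vp'$ changes its neighborhood) and is still $\vp'$-risky. Here I would invoke the hypothesis that $u$ and $v$ have no $\vp$-special common neighbor, i.e.\ no common neighbor had exactly $h+1$ unique colors under $\vp$. The key observation is that for such a $w$, coloring $v$ can turn at most one previously-unique color into a duplicate or create at most one new unique color at $w$; more precisely I would argue that the set of unique colors at $w$ under $\vp'$ is contained in the set under $\vp$ together with at most one new color, and that this new color, if any, is forced to lie in $\FC_{\vp}(u)\cup\{\vp'(v)\}$. The no-special-neighbor condition is exactly what prevents $w$ from switching status in a way that injects a genuinely new forbidden color for $u$: if $w$ had only $h$ unique colors under $\vp$ it was already risky and already contributed all those colors to $\FC_{\vp}(u)$, and adding color $\vp'(v)$ at $w$ either leaves the unique-color set within the old set or adds only $\vp'(v)$ itself. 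I expect this case analysis on the status of the common neighbors $w$ to be the main obstacle, since it requires carefully tracking how $\vp'(v)$ and the pre-existing colors at $w$ interact to change the unique-color set; the special-vertex hypothesis is the precise combinatorial condition that rules out the bad transition. Assembling the two bounds yields $\FC_{\vp'}(u)\subseteq\FC_{\vp}(u)\cup\{\vp'(v)\}$, and taking complements in $\mathcal{C}$ gives $A_{\vp'}(u)\supseteq A_{\vp}(u)\setminus\{\vp'(v)\}$, as desired.
\end{proof}
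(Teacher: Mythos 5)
Your plan follows exactly the paper's route: show $\FC_{\vp'}(u)\subseteq \FC_{\vp}(u)\cup\{\vp'(v)\}$ by (a) proving $\RN_{\vp'}(u)\subseteq\RN_{\vp}(u)$ and (b) showing each surviving risky neighbor's unique-color set grows by at most $\{\vp'(v)\}$, then take complements via \eqref{eq:A}. That is the correct decomposition and the lemma does follow from it.

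However, one justification you give for step (a) is false as written: you claim that coloring $v$ ``can only increase the count of unique colors at a neighbor $w$ of $v$.'' It cannot only increase it --- if $\vp'(v)$ equals a color that was unique in $N_G(w)$ under $\vp$, that color now appears twice and the count of unique colors at $w$ \emph{drops} by one. This is precisely the mechanism by which a common neighbor of $u$ and $v$ could become newly risky (it has the still-uncolored vertex $u$ as a neighbor, so riskiness hinges only on its unique-color count falling to at most $h$), and it is the sole reason the lemma needs the no-$\vp$-special-neighbor hypothesis: a vertex with at least $h+2$ unique colors under $\vp$ still has at least $h+1$ under $\vp'$, and a vertex with at most $h$ was already risky, so only a vertex with \emph{exactly} $h+1$ unique colors --- i.e.\ a $\vp$-special neighbor --- can cross the threshold. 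If your monotonicity claim were true, the hypothesis would be superfluous for step (a). You do state the correct mechanism later (``coloring $v$ can turn at most one previously-unique color into a duplicate'') and you invoke the special-neighbor condition at that point, so the plan contains the right idea; but the argument for $\RN_{\vp'}(u)\subseteq\RN_{\vp}(u)$ must be rewritten to rest on the no-special-neighbor hypothesis rather than on the (false) claim that unique-color counts are monotone under extension. With that repair the proof is complete and coincides with the paper's.
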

\begin{proof}
Since $u$ and $v$ have no $\vp$-special neighbor, a good extension $\vp'$ of $\vp$ to $v$ cannot create a risky neighbor of $u$, so  $\RN_{\vp'}(u)\subseteq \RN_{\vp}(u)$. 
For each $\vp'$-risky neighbor $u'$ of $u$,  
$$\{\text{unique colors in }N_G(u')\text{ under }\vp'\} \subseteq \{\text{unique colors in }N_G(u')\text{ under }\vp\} \cup \{\vp'(v)\},$$
so, $\FC_{\vp'}(u)\subseteq \FC_{\vp}(u)\cup \{\vp'(v)\}$.
By \eqref{eq:A}, 
\[A_{\vp'}(u)=\mathcal{C}\setminus \FC_{\vp'}(u)\supseteq \mathcal{C}\setminus \left(\FC_{\vp}(u)\cup \{\vp'(v)\}\right)=A_{\vp}(u)\setminus \{\vp'(v)\}.\]
%
so the lemma holds. 
\end{proof}

For $j \in [n]$, a sequence $\vp_1,\ldots,\vp_{j}$ of partial good colorings of $G$ is {\it nice} if 
\begin{enumerate}
    \item[(S1)] for $i\in[j]$, $V(\vp_i)=\{v_1,\ldots,v_i\}$, 
    \item[(S2)] for $i\in[j]$, every uncolored vertex $v$ has a $\vp_i$-available color,
    \item[(S3)] for $i\in[j-1]$, $\vp_{i+1}$ is a good extension of $\vp_i$ to $v_{i+1}$, and
    \item[(S4)] for $i\in\{\ell, \ldots, j\}$, if $h=\D-3$ and $\deg_G(v_n)=\D$, then $|\vp_i(N_G(v_n))|=\D-1$.
 \end{enumerate}

When $\psi$ is a partial good coloring in a nice sequence of $G$, every vertex that is uncolored under $\psi$ has a $\psi$-available color, so a good coloring in a nice sequence is in a certain sense stronger than a good coloring of $G$. 
We now proceed to claim that we can color all but three vertices of $G$ in this stronger sense.

\begin{claim}\label{clm:goodsequence}
$G$ has a nice sequence $\vp_1, \ldots, \vp_{n-3}$.
\end{claim}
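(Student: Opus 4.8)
The plan is to construct the sequence greedily along the ordering $\sigma:v_1,\ldots,v_n$ of Lemma~\ref{lemma:pcf:sequence}, coloring one vertex at a time and keeping (S1)--(S4) as invariants; formally I induct on the length of the already-built prefix. The base coloring $\varphi_1$ colors $v_1$ with an arbitrary color: the extension is trivially good, every other vertex then has at most one colored neighbor, so \eqref{eq:A:claim} shows each uncolored vertex has plenty of available colors and (S2) holds, while (S3) and (S4) are vacuous (recall $\ell\ge 2$ since $G$ is not a cycle). For the inductive step, suppose $\varphi_1,\ldots,\varphi_i$ is nice with $i\le n-4$. By (S2) applied to $\varphi_i$ the set $A_{\varphi_i}(v_{i+1})$ is nonempty, so coloring $v_{i+1}$ with any color of $A_{\varphi_i}(v_{i+1})$ produces a good extension that satisfies (S1) and (S3). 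All the work is to select this color inside $A_{\varphi_i}(v_{i+1})$ so that (S2) survives at level $i+1$ and (S4) is maintained.

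For (S2) I would count the colors that are forbidden to us, i.e.\ the colors whose assignment to $v_{i+1}$ would empty $A_{\varphi_i}(v)$ for some still-uncolored vertex $v$. Only vertices at distance at most two from $v_{i+1}$ can have their available set shrink, and the key dichotomy is: assigning a color $c$ removes just $c$ from $A_{\varphi_i}(v)$, \emph{except} when $v$ and $v_{i+1}$ share a $\varphi_i$-special common neighbor $w$ and $c$ is one of the $h+1$ unique colors of $w$, in which case $w$ turns risky for $v$ and up to $h$ further colors of $w$ are also lost. Using Lemma~\ref{lem:nospecial} for the adjacent, special-free pairs and a direct inspection of $\FC_{\varphi_i}$ for the remaining cases, each uncolored vertex with a single available color forbids at most one color to $v_{i+1}$, while each special common neighbor forbids a controlled block of colors. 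I would then compare the total forbidden count with the lower bound \eqref{eq:A:claim},
\[
|A_{\varphi_i}(v_{i+1})|\ \ge\ \bigl(\Delta-|\varphi_i(N_G(v_{i+1}))|\bigr)+h\bigl(\Delta-|\RN_{\varphi_i}(v_{i+1})|\bigr)-1 .
\]
Properties (i) and (ii) of $\sigma$ guarantee $v_{i+1}$ retains uncolored neighbors, which caps $|\varphi_i(N_G(v_{i+1}))|$ and keeps the first term positive, and Lemma~\ref{lem:uncolored} restricts how many neighbors can be simultaneously critical; the surplus carried by the $h(\Delta-|\RN_{\varphi_i}(v_{i+1})|)$ term is what must absorb the special-neighbor losses.

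For (S4) I would first record a structural reduction. Because $E_0$ is a shortest, hence chordless, cycle ordered last in $\sigma$, every neighbor of $v_n$ off the cycle lies in $\{v_1,\ldots,v_{\ell-1}\}$ and the only cycle-neighbors of $v_n$ are $v_\ell$ and $v_{n-1}$; consequently $\varphi_i(N_G(v_n))$ is constant for $i\in\{\ell,\ldots,n-3\}$ and equals its value right after $v_\ell$ is colored. Thus, when $h=\Delta-3$ and $\deg_G(v_n)=\Delta$, (S4) reduces to the requirement that the $\Delta-1$ colored neighbors of $v_n$ receive pairwise distinct colors, which I secure by adding ``avoid the colors already on $N_G(v_n)$'' (at most $\Delta-2$ colors) to the forbidden list whenever $v_{i+1}\in N_G(v_n)$ and re-running the same count with this extra block.

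The step I expect to be the main obstacle is precisely this count in the tight regime $h=\Delta-3$ with (S4) active: there the palette $A_{\varphi_i}(v_{i+1})$ is smallest relative to the constraints, and one must show the forbidden set coming \emph{simultaneously} from special neighbors and from the rainbow requirement at $v_n$ still does not cover all of $A_{\varphi_i}(v_{i+1})$. The delicate point is to bound the number of uncolored vertices that are at once critical (down to one available color) and attached to $v_{i+1}$ through a special neighbor; I expect Lemma~\ref{lem:uncolored}, together with a case split on whether $v_{i+1}$ is a cycle vertex and on the value of $\deg_G(v_n)$ (the two parameters singled out in (S4)), to be unavoidable here.
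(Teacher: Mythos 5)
Your skeleton (greedy colouring along $\sigma$, induction on the prefix, the special-neighbour dichotomy via Lemma~\ref{lem:nospecial}, and the reduction of (S4) to the single step at which $v_\ell$ is coloured) matches the paper's, but the central quantitative step is exactly the one you defer to as ``the main obstacle,'' and the method you propose for it does not close. You plan a union bound: every still-uncoloured vertex within distance two of $v_{i+1}$ that is nearly out of available colours contributes a block of forbidden colours, to be compared against $|A_{\varphi_i}(v_{i+1})|$. But by \eqref{eq:A:claim} the set $A_{\varphi_i}(v_{i+1})$ can be as small as a single colour, while there may be on the order of $\Delta^2$ vertices within distance two; the count cannot be won unless one first shows that almost all of them are automatically safe. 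That is precisely the missing idea, and it is the heart of the paper's argument: by Lemma~\ref{lemma:pcf:sequence}(ii), every $v_k$ with $i+3\le k\le n-1$ still has two uncoloured neighbours after $v_{i+1}$ is coloured, hence $|\varphi_{i+1}(N_G(v_k))|\le\Delta-2$, and Lemma~\ref{lem:uncolored}(ii) makes $v_k$ safe \emph{regardless} of the colour chosen for $v_{i+1}$. Consequently only $v_{i+2}$ needs active protection at each step (Subclaim~\ref{subclaim:2-8}), and even there one needs the structural fact that a failing extension forces $v_{i+1}\in\RN_{\varphi'}(v_{i+2})$, whence $|\varphi_i(N_G(v_{i+1}))|\le\Delta-2$ and $|A_{\varphi_i}(v_{i+1})|\ge h+1\ge 2$; without this you cannot even guarantee a second colour to switch to in the no-special-neighbour case.

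The second gap is $v_n$: it is the one vertex not covered by the ``two uncoloured neighbours'' observation (once $v_\ell$ is coloured its only uncoloured neighbour is $v_{n-1}$), and your sketch protects it only through (S4), which is active solely when $h=\Delta-3$ and $\deg_G(v_n)=\Delta$. The paper needs an additional device here: at stage $\ell-1$ it relabels the initial cycle (rule (O)) so that $v_n$ minimises $|\varphi_{\ell-1}(N_G(\cdot))|$ over $V(E_0)$, and it is this choice, combined with Lemma~\ref{lem:uncolored}(iv) and $h\le\Delta-3$, that produces the contradiction when $v_n$ is assumed to lose its last available colour. Without (O) or some substitute, the final check that $v_n$ survives every step does not go through, and your (S4) treatment (avoiding $\varphi(N_G(v_n))$ whenever a neighbour of $v_n$ is coloured) addresses the rainbow condition but not the availability of $v_n$ itself in the regime $h\le\Delta-4$.
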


\begin{proof}
Define $\vp_1$ to be a coloring that assigns an arbitrary color to $v_1$.
Since (S1) and (S2) hold, $\vp_1$ is a nice  sequence of $G$.
For $i \in [n-4]$, suppose $G$ has a nice  sequence $\vp_1, \ldots, \vp_i$.
We will find a good extension $\vp_{i+1}$ of $\vp_i$ to $v_{i+1}$ such that $\vp_1,\ldots, \vp_{i+1}$ is a nice sequence via induction.
Thus, we only need to check (S2) and (S4). 


When $i=\ell-1$ (before going further to obtain $\vp_{i+1}$), 
if every vertex on $E_0$ has degree $\D$, then we  relabel the vertices on $E_0$ in the following way: 
\begin{enumerate}\label{eq:vn} 
\item[(O)] pick a vertex on $E_0$ to be $v_n$ so that $|\vp_{\ell-1}(N_G(v_n))|=\min\{|\vp_{\ell-1}(N_G(x))| \mid x\in V(E_0) \}$.
\end{enumerate}
Note that this new labelling still satisfies Lemma~\ref{lemma:pcf:sequence}.

First, let us check (S4) holds.
Let $h=\Delta-3$ and $\deg_G(v_n)=\Delta$.
Since $\varphi_{\ell-1}$ is good, $|\varphi_{\ell-1}(N_G(v_k))|= \Delta-2=h+1$ for each $v_k \in V(E_0)$.
To see that (S4) holds, we only need to prove that $\varphi_{\ell}(v_{\ell}) \in A_{\varphi_{\ell-1}}(v_{\ell}) \setminus \varphi_{\ell-1}(N_G(v_n))$.
Since $|\varphi_{\ell-1}(N_G(v_k))| =\Delta-2=h+1$ for each $v_k \in V(E_0)$, $v_{\ell+1}$ and $v_n$ are not $\varphi_{\ell-1}$-risky neighbors of $v_{\ell}$.
Also, $v_{\ell+1}$ and $v_n$ are uncolored neighbors of $v_{\ell}$ under $\varphi_{\ell-1}$.
By \eqref{eq:A:claim}, $|A_{\varphi_{\ell-1}}(v_n) \setminus \varphi_{\ell-1}(N_G(v_n))| \ge (2h+1)-(h+1) = h \ge 1$.
Thus, we can pick $\varphi_{\ell}(v_{\ell}) \in A_{\varphi_{\ell-1}}(v_n) \setminus \varphi_{\ell-1}(N_G(v_n))$, and thus (S4) holds.

\begin{subclaim}\label{subclaim:2-8}
There is a good extension $\vp_{i+1}$ of $\vp_i$ to $v_{i+1}$ such that $v_{i+2}$ has a $\vp_{i+1}$-available color.
\end{subclaim}
\begin{proof}
Suppose $\vp'$ is a good extension of $\vp_i$ to $v_{i+1}$ such that $v_{i+2}$ has no $\vp'$-available color. 

We first show that $i=\ell-1$, $h=\D-3$, and $\deg_G(v_n)=\D$ do not all hold.
Otherwise, (O) and (S4) imply  $|\vp_{i}(N_G(v_{i+1}))|=h+1=\D-2$.
Thus, $v_{i+1}\notin \RN_{\vp'}(v_{i+2})$ and $v_{i+2}$ has an uncolored vertex under $\vp'$, and hence $|A_{\vp'}(v_{i+2})|\neq \emptyset$ by \eqref{eq:A:claim}, which is a contradiction.

By Lemma~\ref{lem:uncolored} (i) and (ii), $\deg_G(v_{i+2})=\D$ and $|\vp'(N_G(v_{i+2}))|\ge \D-1$.
Since $i\le n-4$, $v_{i+2}$ has an uncolored neighbor under $\vp'$ by Lemma~\ref{lemma:pcf:sequence} (i), so $|\vp'(N_G(v_{i+2}))|= \D-1$. 
Thus, $v_{i+1}$ and $v_{i+2}$ are adjacent vertices by Lemma~\ref{lemma:pcf:sequence} (ii).
Lemma~\ref{lem:uncolored} (iv) further implies $|\RN_{\vp'}(v_{i+2})|=\D$, so $v_{i+1}\in \RN_{\vp'}(v_{i+2})$.
Thus, $|\vp'(N_G(v_{i+1}))|\le h+\left\lfloor\frac{\D-1-h}{2}\right\rfloor \le   \D-2$.
Since coloring $v_{i+1}$ does not change the colors in the neighborhood of $v_{i+1}$, it follows that $|\vp_i(N_G(v_{i+1}))|\le  \D-2$, so by \eqref{eq:A:claim}, 
\begin{eqnarray}\label{eq:B}
|A_{\vp_{i}}(v_{i+1})|\ge  2+h(\D-|\RN_{\vp_i}(v_{i+1})|)-1=h(\D-|\RN_{\vp_i}(v_{i+1})|)+1. \end{eqnarray}
Since $|\vp'(N_G(v_{i+2}))|= \D-1$, no color appears twice in $N_G(v_{i+2})$.
Thus, when $v_{i+1}$ is uncolored,  $v_{i+2}$ has $\D-2 \ge h+1$ unique colors, so $v_{i+2}\not \in \RN_{\vp_i}(v_{i+1})$. 

\smallskip

\noindent {\bf(Case 1)} Suppose that $v_{i+1}$ and $v_{i+2}$ have no $\vp_{i}$-special neighbor.
By Lemma~\ref{lem:nospecial}, $\emptyset=A_{\vp'}(v_{i+2})\supseteq A_{\vp_{i}}(v_{i+2})\setminus\{\vp'(v_{i+1})\}$.
Thus $A_{\vp_{i}}(v_{i+2})\subseteq \{\vp'(v_{i+1})\}$, yet $A_{\vp_{i}}(v_{i+2})\neq \emptyset$ by the inductive hypothesis, so $A_{\vp_{i}}(v_{i+2})= \{\vp'(v_{i+1})\}$.
Since $|A_{\vp_{i}}(v_{i+1})|\ge h+1\geq 2$ by \eqref{eq:B}, let $\vp_{i+1}$ be a good extension of $\vp_i$ to $v_{i+1}$ with $c'\in A_{\vp_{i}}(v_{i+1})\setminus\{\vp'(v_{i+1})\}$. 
Then $A_{\vp_{i+1}}(v_{i+2})\supseteq A_{\vp_{i}}(v_{i+2})\setminus\{c'\}=\{\vp'(v_{i+1})\}$ by Lemma~\ref{lem:nospecial}. 
\smallskip

\noindent {\bf (Case 2)} Suppose that $v_{i+1}$ and $v_{i+2}$ have a $\vp_{i}$-special neighbor $v$.
Let $X$ be the set of all unique colors in  the neighborhood of $v$ under $\vp_{i}$, so $|X|=h+1$.
Note that $v\notin \RN_{\vp_{i}}(v_{i+1})$, so two neighbors of $v_{i+1}$ are not $\vp_i$-risky neighbors. 
Since $|A_{\vp_{i}}(v_{i+1})\setminus X|\ge  (2h+1)   -(h+1) \ge h$ by \eqref{eq:B}, let $\vp_{i+1}$ be a good extension of $\vp_i$ to $v_{i+1}$ with $c'\in A_{\vp_{i}}(v_{i+1})\setminus X$.    
Then every color in $X$ is a unique color in $N_G(v)$ under $\vp_{i+1}$, so $v\notin \RN_{\vp_{i+1}}(v_{i+2})$.
Since $v_{i+2}$ has an uncolored neighbor under $\vp_{i+1}$ 
and $|\RN_{\vp_{i+1}}(v_{i+2})|\leq \D-1$, by Lemma~\ref{lem:uncolored} (iv), $v_{i+2}$ has a $\vp_{i+1}$-available color.
\end{proof}

By Subclaim~\ref{subclaim:2-8}, there is a good extension $\vp_{i+1}$ of $\vp_i$ to $v_{i+1}$ such that $v_{i+2}$ has a $\vp_{i+1}$-available color. 
We now show that each of $v_{i+3}, \ldots, v_n$ has a $\vp_{i+1}$-available color.

If $v_k$ is a vertex where $i+3\le k\le n-1$, 
then Lemma~\ref{lemma:pcf:sequence} (ii) implies $v_k$ has at least two uncolored neighbors under $\vp_{i+1}$, so $v_k$ has at most $\D-2$ colors in its neighborhood  under $\vp_{i+1}$.
Lemma~\ref{lem:uncolored} (ii) implies $v_k$ has a $\vp_{i+1}$-available color.
It remains to confirm that $v_n$ has a $\vp_{i+1}$-available color.

Suppose to the contrary that $v_n$ has no $\vp_{i+1}$-available color. 
By Lemma~\ref{lem:uncolored} (i) and (ii),  $\deg_G(v_n)=\D$ and $|\vp_{i+1}(N_G(v_n))|\ge \D-1$. 
Since $v_n$ is a vertex with minimum degree among $V(E_0)$, this implies all vertices on $E_0$ have degree $\D$.
Since $i\le n-4$, we know $v_n$ has an uncolored neighbor $v_{n-1}$ under $\vp_{i+1}$, so $|\vp_{i+1}(N_G(v_n))|=\D-1$. 
Thus, $v_\ell$ is colored under $\vp_{i+1}$,
so $i\ge \ell-1$ and $|\vp_{\ell-1}(N_G(v_n))|=\D-2$. 
By  (O), 
$|\vp_{\ell-1}(N_G(v_{n-1}))|=\D-2$ since $v_{n-2}$ and $v_n$ are uncolored under $\vp_{\ell-1}$.
Furthermore, since $i \le n-4$, $|\vp_{i+1}(N_G(v_{n-1}))|= \D-2$.
On the other hand, $|\RN_{\vp_{i+1}}(v_n)|=\D$ by Lemma~\ref{lem:uncolored} (iv),  so $v_{n-1} \in \RN_{\vp_{i+1}}(v_n)$. 
Then $\D-2 \le h$, which is a contradiction to the assumption that $\D\geq h+3$. 
Thus, (S2) holds.
\end{proof}

Let $\vp_1, \ldots, \vp_{n-3}$ be a nice sequence of $G$, guaranteed to exist by the previous claim.
Note that only three vertices $v_{n-2}$, $v_{n-1}$, $v_n$ are uncolored under $\vp_{n-3}$, and each of the three vertices has a  $\vp_{n-3}$-available color. For two uncolored vertices $u$ and $v$, we say $u$ is \textit{$(\vp, v)$-safe} if $u$ has a $\vp'$-available color for every good extension $\vp'$ of $\vp$ to $v$.

\begin{claim}\label{claim:triangle}
$E_0$ is not a $3$-cycle. 
\end{claim}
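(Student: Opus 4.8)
The plan is to argue by contradiction. Suppose $E_0$ is a $3$-cycle, so $\ell=n-2$ and $v_{n-2},v_{n-1},v_n$ form a triangle. By Claim~\ref{clm:goodsequence} there is a nice sequence $\vp_1,\ldots,\vp_{n-3}$ coloring every vertex except these three, and under $\vp:=\vp_{n-3}$ each of $v_{n-2},v_{n-1},v_n$ has a $\vp$-available color. Since a good coloring in which every vertex is colored is precisely a proper $h$-CF coloring, it suffices to extend $\vp$ to a good coloring of all of $G$, which contradicts the choice of $G$ as a counterexample. Concretely, I would color $v_{n-2}$, then $v_{n-1}$, then $v_n$, choosing the first two colors (formalized through the $(\vp,v)$-safe notion) so that $v_n$ still has an available color at the end.

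First I would dispatch the easy case $\deg_G(v_n)\le\D-1$. Here Lemma~\ref{lem:uncolored}(i) guarantees that $v_n$ has an available color under \emph{every} partial good coloring, so it is enough to color $v_{n-2}$ and then $v_{n-1}$ while preserving goodness; each has a $\vp$-available color by \eqref{eq:A:claim}, and an analysis as in Subclaim~\ref{subclaim:2-8} (using Lemma~\ref{lem:nospecial} to absorb the single color that a $\vp$-special common neighbor could forbid) colors them in turn. Hence I may assume $\deg_G(v_n)=\D$. Because $v_n$ has minimum degree among all vertices on shortest cycles (Lemma~\ref{lemma:pcf:sequence}(iii)) and $v_{n-2},v_{n-1}\in V(E_0)$ lie on a shortest cycle, this forces $\deg_G(v_{n-2})=\deg_G(v_{n-1})=\D$; in particular each of the three vertices has exactly $\D-2$ already-colored neighbors under $\vp$, its two uncolored neighbors being the other two triangle vertices.

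The heart of the argument is to guarantee that $v_n$ remains colorable after $v_{n-2},v_{n-1}$ are colored. Writing $\psi$ for the resulting coloring, in which $v_n$ is the only uncolored vertex, Lemma~\ref{lem:uncolored} shows this can fail only if both $|\psi(N_G(v_n))|\ge\D-1$ and $|\RN_\psi(v_n)|\ge\D-1$ hold. I would defeat the second condition by making \emph{both} $v_{n-2}$ and $v_{n-1}$ non-risky, so that $|\RN_\psi(v_n)|\le\D-2$. The key point is that whether $v_{n-2}$ is risky at the end is governed by the colors of its $\D-2$ fixed neighbors together with the color of $v_{n-1}$: goodness of $\vp$ forces $v_{n-2}$ to have at least $h$ uniquely colored fixed neighbors (either directly, or because all $\D-2\ge h+1$ of them are distinct when $h\le\D-3$), so coloring $v_{n-1}$ with a color avoiding that fixed palette raises $v_{n-2}$ to at least $h+1$ unique colors. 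Symmetrically the color of $v_{n-2}$ can be used to make $v_{n-1}$ non-risky. Thus the plan is to pick distinct $\vp$-available colors $a,b$ for $v_{n-2},v_{n-1}$ (they are adjacent) such that $a$ avoids the fixed palette of $v_{n-1}$, $b$ avoids the fixed palette of $v_{n-2}$, and the intermediate colorings stay good; a count via \eqref{eq:A:claim} and \eqref{eq:B}, together with the minimum-distinct-color choice (O) of $v_n$, provides the slack for such colors to survive. When this simultaneous choice is blocked, I would instead force a repeated color in $N_G(v_n)$, driving $|\psi(N_G(v_n))|\le\D-2$ and killing the first condition via Lemma~\ref{lem:uncolored}(ii).

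The main obstacle I anticipate is exactly this coupling: the color of $v_{n-2}$ simultaneously controls the riskiness of $v_{n-1}$ (and, through the special-neighbor phenomenon of Lemma~\ref{lem:nospecial}, of common neighbors of the triangle) as well as the properness and availability for $v_{n-2}$ itself, and likewise with the roles reversed. The triangle makes the bookkeeping tighter than the non-adjacent configuration, since $v_{n-2}$ and $v_n$ are now adjacent and coloring $v_{n-2}$ already consumes one of $v_n$'s slots. The real work is to carry out the counting so that the two ``make the other non-risky'' demands can be met at once---or else that a color collision in $N_G(v_n)$ is forced---while keeping every intermediate coloring good; the minimum-degree selection (O) of $v_n$ is what ultimately supplies the margin needed to push the relevant counts strictly below the thresholds of Lemma~\ref{lem:uncolored}.
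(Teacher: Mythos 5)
Your overall strategy is the same as the paper's: assume $E_0=\{v_{n-2},v_{n-1},v_n\}$ is a triangle, extend the nice sequence greedily through these three vertices, and use Lemma~\ref{lem:uncolored} to argue that $v_n$ can be colored last because $v_{n-2}$ and $v_{n-1}$ have been made non-$\psi$-risky. The reduction to the case $\deg_G(v_i)=\D$ for all $i$ and the observation that each triangle vertex then has exactly $\D-2$ colored neighbors are also as in the paper. However, there is a genuine gap at exactly the point you flag as ``the real work'': in the tight case $h=\D-3$ the counting does not close. There each triangle vertex has only $h+1$ colored fixed neighbors (necessarily rainbow by goodness), so \eqref{eq:A:claim} gives $|A_{\vp}(v_{n-2})|\ge 2h+1$, which does leave room for $a\in A_{\vp}(v_{n-2})\setminus \vp(N_G(v_{n-1}))$; but at the second step $|\vp_a(N_G(v_{n-1}))|=h+2=\D-1$, so $|A_{\vp_a}(v_{n-1})|\ge 2h$ at best (and only $h$ if $a$ happens to hit one of $v_n$'s $h+1$ unique colors, turning the special common neighbor $v_n$ risky), while you must additionally avoid the $h+2$ colors of $\vp(N_G(v_{n-2}))\cup\{a\}$. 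For $h=1,\D=4$ this is $2<3$, so the simultaneous choice of $a$ and $b$ is not guaranteed; nor is $a\in A_\vp(v_{n-2})\setminus(\vp(N_G(v_{n-1}))\cup\vp(N_G(v_n)))$, since $2h+1-2(h+1)<0$. Your fallback --- forcing $|\psi(N_G(v_n))|\le\D-2$ --- requires \emph{both} $\psi(v_{n-2})$ and $\psi(v_{n-1})$ to land inside $\vp(N_G(v_n))$ (one collision only brings the count down to $\D-1$), and nothing guarantees that $A_\vp(v_{n-2})$ even meets $\vp(N_G(v_n))$.

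The paper closes this case with two ingredients your sketch lacks. First, it orders the triangle by $|\vp_{n-3}(N_G(\cdot))|$ and proves a safety subclaim so that it can first establish $|\vp_{n-3}(N_G(w))|=\D-2$ for all three vertices (your appeal to ``at least $h$ uniquely colored fixed neighbors'' from goodness alone is weaker when $h\le\D-4$, since a vertex with repeats in its fixed palette can leave $|\RN_\vp(v_{n-2})|$ as large as $\D$ and $|A_\vp(v_{n-2})|$ as small as $1$). Second, and decisively for $h=\D-3$: when $v_n$ gets stuck after $v_{n-2}$ and $v_{n-1}$ are colored, the paper \emph{uncolors} $v_{n-2}$, colors $v_n$ (now possible since $v_n$ regains an uncolored neighbor and $v_{n-1}$ is not risky), and then recolors $v_{n-2}$, using the derived fact $\vp^1(v_{n-1})\in\vp_{n-3}(N_G(v_{n-2}))$ to bound $|\vp^3(N_G(v_{n-2}))|\le\D-1$. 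This backtracking step is the key idea missing from your proposal; without it, the purely forward greedy argument you describe cannot be completed in the case $h=\D-3$, which is precisely the case the rest of Section~\ref{subsec:smallh} shows is the hardest one.
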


\begin{proof} 
Suppose to the contrary that $E_0$ is a $3$-cycle. 
Thus, $V(E_0)=\{v_{n-2}, v_{n-1}, v_n\}$. 

\begin{subclaim}\label{subclm:safe}
For two vertices $x,y$ on $E_0$, if $|\vp_{n-3}(N_G(x))|\le |\vp_{n-3}(N_G(y))|$, then $x$ is $(\vp_{n-3}, y)$-safe.
\end{subclaim}
\begin{proof}
Suppose to the contrary that $x$ has no $\vp$-available color for some good extension $\vp$ of $\vp_{n-3}$ to $y$.
By Lemma~\ref{lem:uncolored} (ii), $|\vp(N_G(x))|\ge \D-1$.
Yet, $x$ has an uncolored neighbor under $\vp$, so $|\vp(N_G(x))|= \D-1$, which further implies $|\vp_{n-3}(N_G(x))|=\D-2$.
Thus, $|\vp(N_G(y))|=|\vp_{n-3}(N_G(y))| \ge |\vp_{n-3}(N_G(x))| = \D-2 \ge h+1$.
Hence, $y\notin \RN_{\vp}(x)$, so $|\vp(N_G(x))|+|\RN_\vp(x)|\leq 2\D-2$, which is a contradiction to Lemma~\ref{lem:uncolored} (iv).
\end{proof}

Now, relabel the vertices on $E_0$ so that $|\vp_{n-3}(N_G(v_n))|\le |\vp_{n-3}(N_G(v_{n-1}))| \le |\vp_{n-3}(N_G(v_{n-2}))|$.
Assume $\vp$ is a good extension of $\vp_{n-3}$ to $v_{n-2}$. 
By Subclaim~\ref{subclm:safe},
$A_{\vp}(v_{n-1})\neq \emptyset$ and  $A_{\vp}(v_{n})\neq \emptyset$. 
Moreover, since $G$ is a counterexample to the theorem, $v_n$ (resp. $v_{n-1}$) has no $\vp'$-available color  for every extension $\vp'$ of $\vp$ to $v_{n-1}$  (resp. $v_{n}$) obtained by coloring  $v_{n-1}$  (resp. $v_{n}$) with a color in  $A_{\vp}(v_{n-1})$  (resp. $A_{\vp}(v_{n})$).
By Lemma~\ref{lem:uncolored} (i) and (ii), for $w\in\{v_{n-1}, v_n\}$, \begin{eqnarray}\label{vn;eq}
&&\deg_G(w)=\D \quad \text{ and }\quad |\vp(N_G(w))|\ge \D-2.
\end{eqnarray} 

\begin{subclaim} \label{subclm:extreme}
  $|\vp_{n-3}(N_G(w))|=\D-2\ge h+2$ for each $w$ on $E_0$.
\end{subclaim}
\begin{proof}
First, suppose to the contrary that $h=\D-3$.
If $|\vp_{n-3}(N_G(v_{n}))| \le \D-3$, then there is a repeated color on $N_G(v_n)$  under $\vp_{n-3}$ since $v_n$ has $\D-2$ colored neighbors under $\vp_{n-3}$.
Note that $\deg_G(v_n)=\D$ by \eqref{vn;eq}.
Since there is a repeated color on $N_G(v_n)$ under $\vp_{n-3}$ and there are at least $h$ unique colors on $N_G(v_n)$,
we know $h+1\le |\vp_{n-3}(N_G(v_n))|$, which implies $h\le \D-4$, a contradiction. 
Hence $|\vp_{n-3}(N_G(v_i))|=\D-2=h+1$ for each $i\in\{n-2, n-1, n\}$. 
Thus neither $v_{n-1}$ nor $v_n$ is a $\vp_{n-3}$-risky neighbor of $v_{n-2}$. 
Moreover, since $|A_{\vp_{n-3}}(v_{n-2})\setminus \vp_{n-3}(N_G(v_{n-1}))|\ge (2+2h-1)-(h+1)=h\ge 1$ by~\eqref{eq:A:claim}, we may assume that $\vp$ was obtained by coloring $v_{n-2}$ with a color in $A_{\vp_{n-3}}(v_{n-2})\setminus \vp_{n-3}(N_G(v_{n-1}))$, so $|\vp(N_G(v_{n-1}))|=h+2$.
Let $\vp^1$ be a good extension of $\vp$ to $v_{n-1}$, so $|\vp^1(N_G(v_{n-1}))|= h+2$.
Therefore, $v_{n-1}\not\in \RN_{\vp^1}(v_{n})$.
Since $G$ is a counterexample to the theorem, $v_n$ has no $\vp^1$-available color, so Lemma~\ref{lem:uncolored}~(iv) implies $|\vp^1(N_G(v_n))|=\D$ and 
$v_{n-2}\in \RN_{\vp^1}(v_{n})$. 
Thus $\vp^1(v_{n-1})\in \vp_{n-3}(N_G(v_{n-2}))$.
Let $\vp^{2}$ be the coloring obtained from $\vp^1$ by deleting the color on $v_{n-2}$.
Now, color $v_{n}$ with a $\vp^2$-available color to obtain $\vp^3$; this is possible since $v_{n}$ has an uncolored neighbor $v_{n-2}$ under $\vp^2$ and $v_{n-1}\not\in \RN_{\vp^2}(v_n)$ because $|\vp^2(N_G(v_{n-1}))|\geq h+1$.
At this point all vertices except $v_{n-2}$ are colored and $|\vp^{3}(N_G(v_{n-2}))|\le \D-1$ since $\vp^1(v_{n-1})\in \vp_{n-3}(N_G(v_{n-2}))$.
Since $|\vp^1(N_G(v_n))|=\D$, we know $v_n$ has $\D-1$ unique colors under $\vp^{3}$, so $v_n\not\in  \RN_{\vp^{3}}(v_{n-2})$ because $h=\D-3$. 
Then Lemma~\ref{lem:uncolored} (iv) implies $v_{n-2}$ has a $\vp^3$-available color, which we can assign to $v_{n-2}$ to color all vertices of $G$. 
This is a contradiction. 

Now we know $h\le \D-4$.
Let $\varphi$ be a good extension of $\varphi_{n-3}$ to $v_{n-2}$.
Since $A_{\vp}(v_{n-1}) \neq \emptyset$, 
there is a good extension $\vp'$ of $\vp$ to $v_{n-1}$.
Then by \eqref{vn;eq}, $|\vp(N_G(v_{n-1}))|\ge \D-2 \ge h+2$, so $v_{n-1}$ has at least $h+1$ unique colors under $\vp'$.
Thus, $v_{n-1}$ is not a $\vp'$-risky neighbor of $v_n$, so $|\RN_{\vp'}(v_n)|\le \D-1$.
If $|\vp_{n-3}(N_G(v_n))| \le \D-3$, then $|\vp'(N_G(v_n))|\le \D-1$.
By Lemma~\ref{lem:uncolored} (iv), 
$v_n$ has a $\vp'$-available color, which we can assign to $v_n$ to color all vertices of $G$.
This is a contradiction, so $|\vp_{n-3}(N_G(v_n))| =\D-2$.
Since $|\vp_{n-3}(N_G(v_n))| \le |\vp_{n-3}(N_G(v_{n-1}))| \le |\vp_{n-3}(N_G(v_{n-2}))|$, 
and both $v_{n-1}$ and $v_{n-2}$ have two uncolored neighbors under $\vp_{n-3}$, the subclaim is proved.
\end{proof}

For a good extension $\vp'$ of $\vp$ to $v_{n-1}$,
Subclaim~\ref{subclm:extreme} implies $v_{n-2}, v_{n-1}\not\in \RN_{\vp'}(v_n)$.
Thus, $v_n$ has a $\vp'$-available color by Lemma~\ref{lem:uncolored} (iii), which is a contradiction to the assumption that $G$ is a counterexample. 
\end{proof}
 
Since $E_0$ is a shortest cycle of $G$, Claim~\ref{claim:triangle} implies $G$ has no $3$-cycles.
Thus, $v_{n-2}$ and $v_n$ are not adjacent to each other.
Moreover, special neighbors do not exist.

We now show a series of claims showing $h=\D-3$ and $E_0$ cannot be a $5$-cycle. 
Recall that if $h=\D-3$ and $\deg_G(v_n)=\D$, then $|\vp_{n-3}(N_G(v_n))|=\D-1$ by (S4).

\begin{claim}\label{claim:triangle2}
The following hold:
\begin{enumerate}[(i)]
\item Every vertex $v_k$ on $E_0$ has degree $\D$ and has $\D-2 \ge  h+1$ unique colors under $\vp_{\ell-1}$. 
\item  Both $v_{n-2}$ and $v_n$ have at least $h+2$ unique colors under $\vp_{n-3}$. 
\end{enumerate}
\end{claim}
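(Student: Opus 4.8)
The plan is to prove both parts by contradiction, exploiting that $\vp_{\ell-1}$ leaves exactly the cycle $E_0$ uncolored and that $E_0$, being a shortest cycle, is chordless. Hence every vertex $v_k$ on $E_0$ has exactly two uncolored neighbours under $\vp_{\ell-1}$ (its two $E_0$-neighbours) and exactly $\deg_G(v_k)-2\le \D-2$ coloured neighbours. In particular, ``$|\vp_{\ell-1}(N_G(v_k))|=\D-2$ with all colours distinct'' is equivalent to ``$\deg_G(v_k)=\D$ and $v_k$ has $\D-2$ unique colours'', which is exactly (i). I first reduce the degree statement to $\deg_G(v_n)=\D$: since $v_n$ has minimum degree among the vertices of $E_0$ by Lemma~\ref{lemma:pcf:sequence}(iii) (and this is preserved by the relabelling (O)), every vertex of $E_0$ has degree $\D$ as soon as $v_n$ does.

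To prove $\deg_G(v_n)=\D$, suppose $\deg_G(v_n)<\D$. Then Lemma~\ref{lem:uncolored}(i) guarantees $v_n$ a $\vp$-available colour whenever it is uncolored, so it is safe to colour $v_n$ last, and it remains to colour the adjacent pair $v_{n-2},v_{n-1}$; recall $v_{n-2}\not\sim v_n$ (as $G$ is triangle-free by Claim~\ref{claim:triangle}) and that no $\vp_{n-3}$-special neighbours exist. I would colour $v_{n-2}$ with a $\vp_{n-3}$-available colour to obtain $\vp'$, then apply Lemma~\ref{lem:nospecial} with $u=v_{n-1}$ and $v=v_{n-2}$ to get $A_{\vp'}(v_{n-1})\supseteq A_{\vp_{n-3}}(v_{n-1})\setminus\{\vp'(v_{n-2})\}$. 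Since $v_{n-1}$ has two uncolored neighbours, $|\vp_{n-3}(N_G(v_{n-1}))|\le \D-2$, so by \eqref{eq:A:claim} this set is nonempty unless $|\RN_{\vp_{n-3}}(v_{n-1})|=\D$, i.e. every neighbour of $v_{n-1}$ is $\vp_{n-3}$-risky. In that fully tight case I would instead use the minimality of $\deg_G(v_n)$ (forcing $v_n\in\RN_{\vp_{n-3}}(v_{n-1})$ to be risky with few coloured neighbours) together with the freedom to colour $v_{n-1}$ first or to avoid its unique available colour when choosing $\vp'(v_{n-2})$, thereby completing $G$ and contradicting minimality. Thus $\deg_G(v_n)=\D$, and by the reduction every vertex of $E_0$ has degree $\D$.

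With all degrees on $E_0$ equal to $\D$, each $v_k$ on $E_0$ has exactly $\D-2$ coloured neighbours under $\vp_{\ell-1}$, and $\vp_{\ell-1}$-goodness yields either $\D-2$ unique colours (all distinct) or at least $h$ unique colours together with a repeated colour. The crux is excluding the repeated-colour alternative: a repeat creates slack in $|A_{\vp_{\ell-1}}(v_k)|$, and propagating this slack forward through the colouring of $E_0$ --- exactly as in the extension arguments behind Lemma~\ref{lem:nospecial} and Subclaim~\ref{subclm:extreme} --- would let me finish all three of $v_{n-2},v_{n-1},v_n$, a contradiction. Hence each $v_k$ has $\D-2\ge h+1$ unique colours, proving (i). For (ii), since $\deg_G(v_{n-2})=\deg_G(v_n)=\D$ and each of $v_{n-2},v_n$ has the single uncolored neighbour $v_{n-1}$ under $\vp_{n-3}$, each has exactly $\D-1$ coloured neighbours; the same no-repeat argument forces these to be distinct, giving $\D-1\ge h+2$ unique colours. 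When $h=\D-3$, property (S4) yields $|\vp_{n-3}(N_G(v_n))|=\D-1$ directly, so the $\D-1=h+2$ coloured neighbours of $v_n$ already carry $h+2$ distinct colours, settling $v_n$ immediately.

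The step I expect to be the main obstacle is excluding colour repetitions among the coloured neighbours --- the tight cases where $\vp$-goodness is satisfied with exactly $h$ unique colours and $|\RN_\vp|$ maximal. Goodness by itself does not forbid a repeat, so one must actively convert any such slack into a full completion of $v_{n-2},v_{n-1},v_n$ via the counterexample hypothesis, all while carefully tracking the boundary case $h=\D-3$ through (S4). Keeping the bookkeeping of risky neighbours and unique colours consistent across the two colourings $\vp_{\ell-1}$ and $\vp_{n-3}$ is the delicate part of the argument.
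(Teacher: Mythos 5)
Your outline has the right shape but the essential content of the claim is missing. The real assertion here is not just that every vertex of $E_0$ has degree $\D$; it is that the colored neighborhoods carry \emph{no repeated colors} -- equivalently, that $|\vp_{n-3}(N_G(v_i))|=\D-1$ for $i\in\{n-2,n\}$. You correctly identify this as ``the main obstacle,'' but your resolution of it -- ``a repeat creates slack in $|A_{\vp_{\ell-1}}(v_k)|$, and propagating this slack forward \dots would let me finish all three of $v_{n-2},v_{n-1},v_n$'' -- is not an argument. Goodness of $\vp_{n-3}$ only guarantees $h$ unique colors, which for $h=\D-3$ leaves room for exactly one repeat, and nothing in Lemma~\ref{lem:nospecial} or \eqref{eq:A:claim} automatically converts that single unit of slack into a completion of all three remaining vertices: coloring $v_j$ can still kill the lone available color of $v_{n-1}$, and $v_{n-1}$ in turn can kill the lone available color of $v_i$. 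The paper's proof spends essentially all of its length on exactly this configuration, chaining two good extensions $\vp^*$ (to $v_{n-1}$) and $\vp^{**}$ (to $v_i$) through Lemma~\ref{lem:nospecial} to pin down $A_{\vp}(v_i)=A_{\vp}(v_{n-1})=\{\alpha\}$, then extracting $|\RN_\vp(v_i)|=|\RN_\vp(v_{n-1})|=\D$ from \eqref{eq:A:claim}, splitting on $h\le\D-4$ versus $h=\D-3$, and invoking (S4) to reach the contradiction. None of that is recoverable from your sketch.

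There is also a concrete hole in your deduction of (ii) from (i). Knowing that the $\D-2$ colored neighbors of $v_n$ under $\vp_{\ell-1}$ are rainbow does not give $h+2$ unique colors under $\vp_{n-3}$: between $\vp_{\ell-1}$ and $\vp_{n-3}$ the vertex $v_\ell$ (resp.\ $v_{n-3}$ for $v_{n-2}$) gets colored, and if its color collides with one of the existing $\D-2$, the count of unique colors drops to $\D-3=h$ when $h=\D-3$. Ruling out that collision is precisely the statement $|\vp_{n-3}(N_G(v_i))|=\D-1$, so (ii) cannot be obtained ``by the same no-repeat argument'' as an afterthought -- it is the claim itself. (Your observation that (S4) settles $v_n$ when $h=\D-3$ is correct, but it says nothing about $v_{n-2}$, nor about $v_n$ when $h\le\D-4$.) Finally, note that the paper derives part (i) from part (ii) via the relabelling rule (O): once $|\vp_{\ell-1}(N_G(v_n))|=\D-2$ is known, (O) forces $|\vp_{\ell-1}(N_G(v_k))|=\D-2$ for every $v_k$ on $E_0$. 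Your plan runs in the opposite direction and never uses (O) for this purpose, which is why the ``propagation'' step has nothing to stand on.
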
 
\begin{proof}
We will first show that $|\vp_{n-3}(N_G(v_i))| = \D -1$ for $i \in \{n-2, n\}$.
Suppose to the contrary that $|\vp_{n-3}(N_G(v_i))|\le \D-2$ for some $i\in \{n-2,n\}$, and let $j\in \{n-2,n\}\setminus\{i\}$.
Let $\vp$ be a good extension of $\vp_{n-3}$ to $v_j$, so $\vp(N_G(v_i))=\vp_{n-3}(N_G(v_i))$ since $v_i$ and $v_j$ are not adjacent to each other.
Thus, $|\vp(N_G(v_i))|\le \D-2$, so Lemma~\ref{lem:uncolored} (ii)  implies $v_i$ has a $\vp$-available color. 
Thus $v_i$ is $(\vp_{n-3}, v_j)$-safe. By applying Lemma~\ref{lem:nospecial} to $v_{n-1}$ and $v_j$, $A_{\vp}(v_{n-1})\supseteq A_{\vp_{n-3}}(v_{n-1}) \setminus \{\vp(v_j)\}$.

Suppose $v_{n-1}$ has no $\vp$-available color, which implies $A_{\vp_{n-3}}(v_{n-1})=\{\vp(v_j)\}$. 
By Lemma~\ref{lem:uncolored} (ii), $|\vp(N_G(v_{n-1}))|\ge \D-1$. 
Since $v_{n-1}$ has an uncolored neighbor $v_i$ under $\vp$, $|\vp(N_G(v_{n-1}))|= \D-1\geq h+2$, so $v_{n-1}\notin \RN_{\vp_{n-3}}(v_j)$.
Lemma~\ref{lem:uncolored}~(iv) implies $|\RN_{\vp}(v_{n-1})|=\D$.
Since $v_j\in \RN_{\vp_{n-3}}(v_{n-1})$, $|\vp_{n-3}(N_G(v_j))|\le \D-2$.
By \eqref{eq:A:claim}, $|A_{\vp_{n-3}}(v_j)|\ge h+1\ge 2$, so there is a good extension $\vp'$ of $\vp_{n-3}$ to $v_j$ such that $v_{n-1}$ has a $\vp'$-available color.
By renaming if necessary, assume $\vp$ is a good extension of $\vp_{n-3}$ to $v_j$ so that $A_{\vp}(v_{n-1})\neq \emptyset$.

Let $\vp^*$ be a good extension of $\vp$ to $v_{n-1}$, and let $\alpha=\vp^*(v_{n-1})$.
By applying Lemma~\ref{lem:nospecial} to $v_i$ and $v_{n-1}$, it follows that $ A_{\vp^*}(v_i) \supseteq A_{\vp}(v_i)\setminus \{\alpha\}$. 
This implies $A_{\vp}(v_i)= \{\alpha\}$. 
Let $\vp^{**}$ be a good extension of $\vp$ to $v_{i}$ with $\alpha$.
By applying Lemma~\ref{lem:nospecial} to $v_i$ and $v_{n-1}$, it follows that $ A_{\vp^{**}}(v_{n-1}) \supset A_{\vp}(v_{n-1})\setminus \{\vp^{**}(v_{i})\}$. This implies that $A_{\vp}(v_{n-1})= \{\alpha\}$.

By applying Lemma~\ref{lem:uncolored} (i) and (ii) to $\vp^*$ and $\vp^{**}$, for each $k\in\{n-1,i\}$, we know $\deg_G(v_{k})=\D$ and $|\vp(N_G(v_{k}))|\ge \D-2$.
Also, since $|\vp(N_G(v_i))|\le \D-2$ and $|A_{\vp}(v_{i})|=1$, it follows that $|\RN_{\vp}(v_{i})|=\D$ by \eqref{eq:A:claim}.
In addition, since $v_{n-1}\in \RN_{\vp}(v_{i})$, we know $|\vp(N_G(v_{n-1}))|=\D-2$, so $|\RN_{\vp}(v_{n-1})|=\D$ by \eqref{eq:A:claim} and the fact that $|A_{\vp}(v_{n-1})|=1$.
If $h\le \D-4$, then $|\vp(N_G(v_{n-1}))| \ge \D-2\ge h+2$,  so $v_{n-1}$ is not a $\vp$-risky neighbor of $v_{i}$, which is a contradiction.
Thus $h=\D-3$. 
Note that $|\vp_{n-3}(N_G(v_{i}))|\le \D-2$ by the choice of $i$.
If $|\vp_{n-3}(N_G(v_{n}))|\le \D-2$, then we may set $i=n$, and therefore $\deg_G(v_i)=\deg_G(v_n)=\Delta$ and $|\vp_{n-3}(N_G(v_n))|=\D-1$ by (S4), which is a contradiction.
Thus $|\vp_{n-3}(N_G(v_n))|\ge \D-1=h+2$, which is a contradiction to the fact that $v_n$ is a $\vp$-risky neighbor of $v_{n-1}$.
Thus $|\vp_{n-3}(N_G(v_i))| = \D -1$ for $i\in\{n-2, n\}$.


Since $v_n$ has an uncolored neighbor $v_{n-1}$ under $\vp_{n-3}$, we know $v_n$ has degree $\D$.
Moreover, $|\vp_{\ell-1}(N_G(v_{n}))| = \D-2$.
Every vertex $v_k$ on $E_0$ has degree $\D$ by the choice of $v_n$ and $|\vp_{\ell-1}(N_G(v_k))| = \D-2$ by (O), so (i) holds.

For $i\in\{n-2, n\}$, since $v_{n-1}$ is an uncolored neighbor of $v_i$ under $\vp_{n-3}$, we conclude $v_i$ has at least $h+2$ unique colors under $\vp_{n-3}$, so  (ii) holds. 
\end{proof}

For simplicity, let $X$ be the set of colors on $N_{G}(v_{n-1})$ under $\vp_{n-3}$, and note that $|X| = \D-2\ge  h+1$ by Claim~\ref{claim:triangle2}~(i).

\begin{claim}\label{last:claim}
The following hold:
\begin{enumerate}[(i)]
\item $|\vp_{n-3}(N_G(v_{n-3}))| = \D-2$.
\item $h=\D-3$.
\item $A_{\vp_{n-3}}(v_{n-2})$ and $A_{\vp_{n-3}}(v_{n})$ are disjoint subsets of $X$.
\end{enumerate}
\end{claim}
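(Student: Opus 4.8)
The plan is to use the defining property of the counterexample $G$: no good extension of $\vp_{n-3}$ colors all three of $v_{n-2},v_{n-1},v_n$, so every time I exhibit a legal completion I reach a contradiction. The engine behind all completions is one observation. Since $v_{n-2}\not\sim v_n$ (there are no $3$-cycles, by Claim~\ref{claim:triangle}), once $v_{n-2}$ and $v_n$ are each colored by a good extension they still have $\D-1$ distinctly colored neighbors and the single uncolored neighbor $v_{n-1}$; hence each has at least $h+2$ unique colors and is non-risky (here I use $\D\ge h+3$ and the fact from Claim~\ref{claim:triangle2} that $|\vp_{n-3}(N_G(v_{n-2}))|=|\vp_{n-3}(N_G(v_n))|=\D-1$). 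Thus after coloring $v_{n-2}$ and $v_n$, both are non-risky neighbors of $v_{n-1}$, so $v_{n-1}$ has at most $\D-2$ risky neighbors; by Lemma~\ref{lem:uncolored}(iii) it then has an available color as soon as it is itself good (retains at least $h$ unique colors). So the only obstruction to completing the coloring is being unable to color $v_{n-2}$ and $v_n$ while leaving $v_{n-1}$ with at least $h$ unique colors.

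First I would prove the containment in (iii): $A_{\vp_{n-3}}(v_{n-2})\subseteq X$ and, symmetrically, $A_{\vp_{n-3}}(v_n)\subseteq X$. If some $c\in A_{\vp_{n-3}}(v_{n-2})$ were outside $X$, then coloring $v_{n-2}$ with $c$ raises $v_{n-1}$ to $\D-1$ distinctly colored neighbors, keeping it non-risky; then $v_n$ still has an available color by \eqref{eq:A:claim} (its only uncolored neighbor $v_{n-1}$ is non-risky), and after coloring $v_n$ the vertex $v_{n-1}$ keeps at least $\D-2\ge h+1$ unique colors, so the engine finishes the coloring, a contradiction. Part (ii) follows by the same mechanism: if $h\le\D-4$, pick any $c\in A_{\vp_{n-3}}(v_{n-2})\subseteq X$; coloring $v_{n-2}$ with $c$ still leaves $v_{n-1}$ with $\D-3\ge h+1$ unique colors, hence non-risky, so $v_n$ has an available color $b$, and $v_{n-1}$ ends with at least $\D-4\ge h$ unique colors. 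The engine again completes the coloring, so $h=\D-3$.

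Next I would establish (i) and use it for the disjointness in (iii); with $h=\D-3$ these are coupled. For (i): $v_{n-3}$ lies on $E_0$, so $\deg_G(v_{n-3})=\D$, and among the uncolored vertices it is adjacent to $v_{n-2}$ always and to $v_n$ exactly when $E_0$ is a $4$-cycle. If $v_{n-3}$ has two uncolored neighbors it has $\D-2$ colored neighbors, and a repeat among them would leave it with at most $\D-4<h$ unique colors and not all distinct, contradicting that $\vp_{n-3}$ is good; hence $|\vp_{n-3}(N_G(v_{n-3}))|=\D-2$. If $v_{n-3}$ has a single uncolored neighbor, an all-distinct colored neighborhood of size $\D-1$ would make $v_{n-3}$ a second non-risky neighbor of $v_{n-2}$, forcing $|A_{\vp_{n-3}}(v_{n-2})|\ge 2h$ by \eqref{eq:A:claim}, which already contradicts $A_{\vp_{n-3}}(v_{n-2})\subseteq X$ with $|X|=h+1$ when $h\ge 2$ and otherwise yields a completion; so again $|\vp_{n-3}(N_G(v_{n-3}))|=\D-2$. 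For the disjointness, since $A_{\vp_{n-3}}(v_{n-2}),A_{\vp_{n-3}}(v_n)\subseteq X$ with $|X|=\D-2=h+1$, using two distinct colors of $X$ on $v_{n-2}$ and $v_n$ drops $v_{n-1}$ to $h-1$ unique colors, whereas a single common color $c\in X$ leaves exactly $h$; thus a color $c$ shared by $A_{\vp_{n-3}}(v_{n-2})$ and $A_{\vp_{n-3}}(v_n)$ would finish the coloring, provided $c$ is still available at $v_n$ after $v_{n-2}$ is colored. The color $c$ can be newly forbidden at $v_n$ only by a common neighbor $w\ne v_{n-1}$ of $v_{n-2}$ and $v_n$ that turns risky; such a $w$ forms a $4$-cycle with $v_{n-1}$, so $w=v_{n-3}$. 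But (i) gives $v_{n-3}$ exactly $\D-2$ distinct colored neighbors, so after adding $c$ it either stays non-risky or sees $c$ twice, and in neither case does it forbid $c$. Hence $c$ transfers to $v_n$, the completion goes through, and the resulting contradiction forces $A_{\vp_{n-3}}(v_{n-2})\cap A_{\vp_{n-3}}(v_n)=\emptyset$.

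The hard part is exactly this last transfer. All of the $v_{n-1}$-centered bookkeeping is forced and clean, but because $v_{n-2}$ and $v_n$ are non-adjacent, coloring one can affect the availability at the other only through their common neighbors, and the single dangerous such neighbor is $v_{n-3}$ in the girth-$4$ configuration. Part (i) is precisely the structural input that neutralizes $v_{n-3}$, so the genuine obstacle is proving (i) in every configuration (in particular, handling the case where $v_{n-3}$ has a single uncolored neighbor through the available-color count of \eqref{eq:A:claim} rather than goodness alone) and then checking that, with (i) in hand, no common neighbor can block the shared color $c$.
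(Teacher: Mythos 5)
Your overall strategy matches the paper's: show every available color at $v_{n-2}$ and $v_n$ must lie in $X$, pin down $h=\D-3$ from the tight count $|X|=\D-2$, and then rule out a shared color by exhibiting a completion. The containment $A_{\vp_{n-3}}(v_{n-2}),A_{\vp_{n-3}}(v_n)\subseteq X$ and part (ii) are argued correctly. The genuine gap is in the disjointness step, exactly at the ``transfer'' you yourself flag as the hard part. You assert that a common neighbor $w\neq v_{n-1}$ of $v_{n-2}$ and $v_n$ ``forms a $4$-cycle with $v_{n-1}$, so $w=v_{n-3}$.'' The first half only shows the girth is $4$, hence that $E_0$ is a $4$-cycle; it does not show $w=v_{n-3}$. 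Nothing prevents $v_{n-2}$ and $v_n$ from having several common neighbors off $E_0$ (many $4$-cycles may pass through this pair), and your neutralization argument leans on part (i), which says nothing about those other vertices. The paper handles an arbitrary common neighbor $u$ by observing that $u$ lies on a $4$-cycle, i.e.\ on a shortest cycle, so $\deg_G(u)=\D$ by the minimum-degree choice of $v_n$ in Lemma~\ref{lemma:pcf:sequence}(iii) and the relabeling (O); together with triangle-freeness this gives $u$ exactly $\D-2$ colored neighbors, after which the goodness/riskiness bookkeeping shows $u$ can neither forbid the shared color $c$ at $v_n$ nor become bad when both $v_{n-2}$ and $v_n$ receive $c$. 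You never invoke the minimum-degree choice of $v_n$, and without it a low-degree common neighbor could genuinely break the simultaneous extension. The gap is repairable --- the case analysis you run for $v_{n-3}$ goes through for any common neighbor once its degree is pinned to $\D$ --- but as written the disjointness proof is incomplete.

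A secondary issue: in your proof of (i), the sub-case where $v_{n-3}$ has one uncolored neighbor, all $\D-1$ colored neighbors distinctly colored, and $h=1$ is dismissed with ``otherwise yields a completion'' and no argument. A completion does exist, but only because in that sub-case $E_0$ has length at least $5$, so the girth is at least $5$ and $v_{n-2},v_n$ have no common neighbor besides $v_{n-1}$, making the shared color transfer trivially --- i.e.\ it is the same transfer problem again, and it needs to be said. (The paper sidesteps both points in (i) by deriving it directly from the fact that, once $v_n$ is colored and $v_{n-2}$ has no available color, every neighbor of $v_{n-2}$ --- in particular $v_{n-3}$ --- must be risky, which forces the color of $v_{n-4}$ to collide inside $\vp_{\ell-1}(N_G(v_{n-3}))$.)
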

\begin{proof}
Let $\vp$ be a good extension of $\vp_{n-3}$ to $v_{n}$. 
If $v_{n-2}$ has a $\vp$-available color $c$, then let $\vp'$ be the (good) extension of $\vp$ to $v_{n-2}$ with $c$. 
By Claim~\ref{claim:triangle2}~(ii), both $v_{n-2}$ and $v_{n}$ have at least $h+2$ unique colors under $\vp_{n-3}$. 
Since $G$ has no $3$-cycle, $v_{n-2}$ and $v_n$ are not adjacent to each other, so both $v_{n-2}$ and $v_n$ still have at least $h+2$ unique colors under  $\vp'$.
Thus, $v_{n-2}, v_{n}\notin \RN_{\vp'}(v_{n-1})$, so by Lemma~\ref{lem:uncolored} (iii), $v_{n-1}$ has a $\vp'$-available color.
This is a coloring of all of $G$, which is a contradiction, hence, $v_{n-2}$ has no $\vp$-available color. 
Furthermore, since $v_{n-2}$ has an uncolored neighbor under $\vp$, every neighbor of $v_{n-2}$ must be a $\vp$-risky neighbor by Lemma~\ref{lem:uncolored} (iv).

In particular, $v_{n-3}$ is a $\vp$-risky neighbor of $v_{n-2}$.
By Claim~\ref{claim:triangle2}~(i), $v_{n-3}$ has at least $h+1$ unique colors in its neighborhood under $\vp_{\ell-1}$, 
so $\vp_{n-3}(v_{n-4}) \in \vp_{\ell-1}(N_G(v_{n-3}))$ if $v_{n-4} \in N_G(v_{n-3})$.
Thus, $|\vp_{n-3}(N_G(v_{n-3}))| = \D-2$, so (i) holds.

Note that $v_{n-1}$ is not a $\vp_{n-3}$-risky vertex by Claim~\ref{claim:triangle2}~(i).
Thus, $v_{n-1}$ becomes a $\vp$-risky vertex when coloring $v_{n}$, so $\vp(v_{n})\in \vp_{n-3}(N_G(v_{n-1}))$.
Hence, $v_{n-1}$ has exactly $\D-3$ unique colors under $\vp$, so $\D-3 \le h$.
Therefore, $h=\D-3$, so (ii) holds. 
Also, $A_{\vp_{n-3}}(v_{n})\subseteq X$, since $\vp(v_{n}) \in \vp_{n-3}(N_G(v_{n-1})) = X$ and $\vp(v_{n})$ is an arbitrary element in $A_{\vp_{n-3}}(v_{n})$. By symmetry,  $A_{\vp_{n-3}}(v_{n-2})\subseteq X$.

If $c\in A_{\vp_{n-3}}(v_{n-2})\cap A_{\vp_{n-3}}(v_{n})$, then let $\vp'$ be an extension of $\vp_{n-3}$ by coloring both $v_{n-2}$ and $v_n$ with $c$. 
Note that $\vp'$ is a good extension, since if $u$ is a common neighbor of $v_{n-2}$ and $v_n$, then $\deg_G(u)=\D$ by the choice of cycle $E_0$, and therefore $u$ has $h+1$ unique colors under $\varphi_{n-3}$, which implies that $c \in A_{\vp}(v_n)$ for an extension $\vp$ of $\vp_{n-3}$ by assigning a color $c$ to $v_{n-2}$. 
By Claim~\ref{claim:triangle2}~(ii), both $v_{n-2}$ and $v_{n}$ have at least $h+2$ unique colors, so neither $v_{n-2}$ nor $v_n$ is a $\vp'$-risky neighbor of $v_{n-1}$. 
Thus, $v_{n-1}$ has a $\vp'$-available color, which we can use on $v_{n-1}$ to obtain a coloring of all vertices of $G$.
This is a contradiction.
Hence,  $A_{\vp_{n-3}}(v_{n-2})$ and $A_{\vp_{n-3}}(v_{n})$ are disjoint subsets of $X$, so (ii) holds. 
\end{proof}

For $i \in \{n-2,n\}$, since $v_{n-1}$ is both uncolored and not a $\vp_{n-3}$-risky neighbor of $v_i$, we know $|A_{\vp_{n-3}}(v_{i})|\ge 1+h-1=h$ by \eqref{eq:A:claim}.
By Claim~\ref{last:claim}~(iii),
$2h  \le |A_{\vp_{n-3}}(v_{n-2})\cup A_{\vp_{n-3}}(v_{n})| \le |X|=h+1$, 
 so $h=1$, which further implies $\D=4$.
Thus, we may assume $A_{\vp_{n-3}}(v_{n-2})=\{\al\}$, $A_{\vp_{n-3}}(v_{n})=\{\be\}$, and $X=\{\al,\be\}$. 

\begin{claim}\label{claim:vp^*+n-4}
The following hold:
\begin{enumerate}[(i)]
\item
For every good extension $\vp^*_{n-3}$ of $\vp_{n-4}$ to  $v_{n-3}$, $\vp_{n-3}^*(N_{G}(v_{n-1}))=\{\al,\be\}$ and $\vp_1$,$\ldots$,$\vp_{n-4}$,$\vp_{n-3}^*$ is a nice sequence.
\item 
$A_{\vp_{n-4}}(v_{n-3})=\{\al,\be\}$. 
\item 
$E_0$ is not a $5$-cycle. 
\end{enumerate}
\end{claim}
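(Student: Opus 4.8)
The overall plan is to exploit that $E_0$ is a shortest, hence chordless, cycle, together with the recoloring freedom at $v_{n-3}$, and to re-run the forced-configuration analysis of Claims~\ref{claim:triangle2} and~\ref{last:claim} on whichever nice sequence arises. Write $b_\al,b_\be$ for the two colored neighbors of $v_{n-1}$ carrying $\al,\be$ under $\vp_{n-3}$ (they exist since $X=\vp_{n-3}(N_G(v_{n-1}))=\{\al,\be\}$ while $v_{n-2},v_n$ are uncolored), and set $\gamma=\vp_{n-3}(v_{n-3})$. Because $A_{\vp_{n-3}}(v_{n-2})=\{\al\}$ and $v_{n-3}\in N_G(v_{n-2})$, properness gives $\gamma\neq\al$, and similarly $\gamma\neq\be$ whenever $v_{n-3}\sim v_n$. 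The one structural fact I would record at the outset is that, $E_0$ being chordless, the non-consecutive pair $v_{n-3},v_{n-1}$ is non-adjacent, so recoloring $v_{n-3}$ never alters the colors seen by $v_{n-1}$.

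For (i), the identity $\vp^*_{n-3}(N_G(v_{n-1}))=\{\al,\be\}$ is then immediate: the colored neighbors of $v_{n-1}$ are $b_\al,b_\be$ under $\vp_{n-4}$ and under every extension to $v_{n-3}$. For niceness, (S1) and (S3) are built into the definition of a good extension; for (S2) I would apply Lemma~\ref{lem:uncolored} to each of $v_{n-2},v_{n-1},v_n$, using that $v_{n-1}$ is a common non-risky neighbor (it carries the two distinct unique colors $\al,\be$), whence none of the three uncolored vertices can satisfy Lemma~\ref{lem:uncolored}(iii)--(iv) and each retains an available color. For (S4), where $h=\D-3$ and $\deg_G(v_n)=\D$, I use $v_{n-3}\not\sim v_n$ once $E_0$ has length at least $5$, so that $\vp^*_{n-3}(N_G(v_n))=\vp_{n-3}(N_G(v_n))$ keeps size $\D-1$, inherited from (S4) for $\vp_{n-3}$.

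For (ii) I would first get $|A_{\vp_{n-4}}(v_{n-3})|\ge 2$ from \eqref{eq:A:claim}: Claim~\ref{last:claim}(i) gives $|\vp_{n-4}(N_G(v_{n-3}))|=\D-2=2$, and since $v_{n-2}$ has three distinct unique colors under $\vp_{n-3}$ (Claim~\ref{claim:triangle2}(ii)) its two colored neighbors are bicolored, so $v_{n-2}\notin\RN_{\vp_{n-4}}(v_{n-3})$ and $|\RN_{\vp_{n-4}}(v_{n-3})|\le\D-1$. To identify the available colors as exactly $\al,\be$, I would extend $\vp_{n-4}$ along each available color, invoke part (i) to obtain a nice sequence, and re-run Claims~\ref{claim:triangle2} and~\ref{last:claim}: each such extension again yields disjoint singleton available sets for $v_{n-2}$ and $v_n$ whose union is $X=\{\al,\be\}$, and cross-checking these singletons against the colors forced onto $N_G(v_{n-2})$ and $N_G(v_n)$ pins the color placed on $v_{n-3}$ into $\{\al,\be\}$. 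A by-product is $\gamma\in A_{\vp_{n-4}}(v_{n-3})=\{\al,\be\}$, which, with the first paragraph, rules out $E_0$ being a $4$-cycle (there $\gamma\notin\{\al,\be\}$); hence from here $E_0$ has length at least $5$, legitimizing the use of $v_{n-3}\not\sim v_n$ in (i).

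For (iii), suppose $E_0$ is the $5$-cycle $v_{n-4}v_{n-3}v_{n-2}v_{n-1}v_nv_{n-4}$. The decisive feature is the vertex $v_{n-4}$, which is adjacent to both $v_{n-3}$ and $v_n$ but to neither $v_{n-2}$ nor $v_{n-1}$; by Claim~\ref{claim:triangle2}(i) each vertex of $E_0$ has exactly two unique colors under $\vp_{\ell-1}=\vp_{n-5}$, and a short count through \eqref{eq:A:claim} shows $v_{n-4}$ has at least three available colors under $\vp_{n-5}$. Using (ii), I would recolor $v_{n-3}$ with $\al$ to obtain (via (i)) a nice sequence $\chi$ with $\chi(N_G(v_{n-1}))=\{\al,\be\}$, re-run Claims~\ref{claim:triangle2} and~\ref{last:claim} to force $A_\chi(v_{n-2})=\{\be\}$ and $A_\chi(v_n)=\{\al\}$ (properness kills $\al$ at $v_{n-2}$), and then compare the forbidden sets of $v_n$: $\FC_{\vp_{n-3}}(v_n)$ misses only $\be$ while $\FC_\chi(v_n)$ misses only $\al$, and they can differ only through the single risky neighbor $v_{n-4}$ whose neighborhood meets the recolored $v_{n-3}$. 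This pins the two off-cycle neighbors of $v_{n-4}$ to the colors $\al,\be$ and forces $\gamma=\be$; I would then recolor $v_{n-4}$ by one of its $\ge 3$ available colors so that it no longer forces a color onto $v_n$, freeing $v_n$ from the rigid $\{\al,\be\}$-pattern and allowing all of $v_{n-2},v_{n-1},v_n$ to be colored — a proper $h$-CF coloring of $G$, contradicting that $G$ is a counterexample. The main obstacle is exactly this last step of (iii): tracking how recoloring $v_{n-3}$ (and then $v_{n-4}$) redistributes unique colors through the shared neighbor $v_{n-4}$, and verifying that the completed coloring is good and $h$-CF at every vertex; this is where the $5$-cycle is genuinely distinguished from the $4$-cycle (no separating vertex) and from longer cycles (no adjacency linking $v_{n-3}$'s recoloring to the forbidden set of $v_n$).
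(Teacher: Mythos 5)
Your parts (i) and (ii) track the paper's outline, but two points need repair before the real problem in (iii). In (ii), the step that actually pins $A_{\vp_{n-4}}(v_{n-3})$ down to $\{\al,\be\}$ is not a vague ``cross-check'': one must apply Lemma~\ref{lem:nospecial} to the pair $v_{n-3},v_{n-2}$ to get $A_{\vp^*_{n-3}}(v_{n-2})\supseteq A_{\vp_{n-4}}(v_{n-2})\setminus\{c^*\}$, combine this with $|A_{\vp_{n-4}}(v_{n-2})|\ge 2$ (from \eqref{eq:A:claim}) and with the fact that the left-hand side is a singleton $\{c\}\subseteq\{\al,\be\}$ to conclude $A_{\vp_{n-4}}(v_{n-2})=\{c,c^*\}$ — a set independent of the choice of $c^*$ — and then run this for two distinct choices of $c^*$. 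Your sketch never produces the containment that forces $c^*$ into a fixed two-element set. Separately, your verification of (S4) in (i) rests on $v_{n-3}\not\sim v_n$, which you only justify as a by-product of (ii); but your (ii) invokes (i) in full, since re-running Claims~\ref{claim:triangle2} and~\ref{last:claim} on the new sequence requires it to be nice and their proofs use (S4). As written this is circular; the non-adjacency has to be settled (or the (S4) check done directly) before (ii) can be launched.

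The genuine gap is (iii). The paper disposes of the $5$-cycle in two lines: if $E_0$ is a $5$-cycle then $v_{n-4}\in V(E_0)$, so by Claim~\ref{claim:triangle2}~(i) it has $h+1$ unique colors under $\vp_{\ell-1}$ and is therefore not a $\vp_{n-4}$-risky neighbor of $v_{n-3}$; together with $v_{n-2}$ this gives $|\RN_{\vp_{n-4}}(v_{n-3})|\le\D-2$, and since $|\vp_{n-4}(N_G(v_{n-3}))|=\D-2$ by Claim~\ref{last:claim}~(i), inequality \eqref{eq:A:claim} yields $|A_{\vp_{n-4}}(v_{n-3})|\ge 2+2h-1=3$, contradicting (ii). You compute precisely this kind of bound — but for $v_{n-4}$ under $\vp_{n-5}$, where it contradicts nothing — and then launch a recoloring scheme (recolor $v_{n-3}$ with $\al$, compare $\FC_{\vp_{n-3}}(v_n)$ with $\FC_{\chi}(v_n)$, pin the off-cycle neighbors of $v_{n-4}$, recolor $v_{n-4}$, finish the coloring) that you yourself identify as the unresolved main obstacle. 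None of its steps are verified: you do not show that the forbidden sets of $v_n$ can only differ ``through $v_{n-4}$,'' that recoloring $v_{n-4}$ preserves goodness at its other neighbors, or that the three remaining vertices can then all be colored. So (iii) is asserted, not proved; the missing idea is simply that the extra cycle vertex $v_{n-4}$ is a second non-risky neighbor of $v_{n-3}$, which inflates $|A_{\vp_{n-4}}(v_{n-3})|$ past the value $2$ that (ii) has just established.
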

\begin{proof}
(i)
Let $\vp^*_{n-3}$ be a good extension   of $\vp_{n-4}$ to  $v_{n-3}$.
Since $G$ has no $3$-cycles, $v_{n-1}$ is not adjacent to $v_{n-3}$.
Thus, $\vp_{n-3}(N_{G}(v_{n-1}))=X=\vp_{n-3}^*(N_{G}(v_{n-1}))$.
Since $v_{n-1}$ has two uncolored neighbors under $\vp_{n-3}^*$, $v_{n-1}$ has a $\vp_{n-3}^*$-available color  by Lemma~\ref{lem:uncolored} (ii).
Moreover, for $i\in\{n-2,n\}$, $v_{n-1}$ is not a $\vp^*_{n-3}$-risky neighbor of $v_{i}$ by Claim~\ref{claim:triangle2}~(i), and $v_{n-1}$ is an uncolored neighbor of $v_i$.
Thus, $v_i$ has a $\vp_{n-3}^*$-available color  by Lemma~\ref{lem:uncolored} (iv).
Thus $\vp_1,\ldots,\vp_{n-4}, \vp_{n-3}^{*}$ is a nice sequence. 

(ii)
Recall $\D=4$ and $h=1$.
Claim~\ref{last:claim}~(i) implies $|\vp_{n-4}(N_G(v_{n-3}))|=|\vp_{n-3}(N_G(v_{n-3}))| = 2$.

By Claim~\ref{claim:triangle2}~(i),
$v_{n-2}$ is not a $\vp_{n-4}$-risky neighbor of $v_{n-3}$.
Thus, by \eqref{eq:A:claim},   
$|A_{\vp_{n-4}}(v_{n-3})|\ge 2+h-1= 2$. 

Let $\vp^*_{n-3}$ be a good extension of $\vp_{n-4}$ to   $v_{n-3}$ with  $c^*\in A_{\vp_{n-4}}(v_{n-3})$.
(i) and Claim~\ref{last:claim}~(iii) imply that  $A_{\vp^*_{n-3}}(v_{n-2})=\{c\}$ for some $c\in\{\al,\be\}$.
By Lemma~\ref{lem:nospecial}, $A_{\vp^*_{n-3}}(v_{n-2})$ contains $A_{\vp_{n-4}}(v_{n-2})\setminus\{ c^*\}$. 
Claim~\ref{claim:triangle2}~(i) implies $v_{n-1}$ is not a $\vp_{n-4}$-risky neighbor of $v_{n-2}$. 
Since $v_{n-3}$ and $v_{n-1}$ are uncolored neighbors of $v_{n-2}$ under $\vp_{n-4}$, we know  $|A_{\vp_{n-4}}(v_{n-2})|\ge 2$ by 
(\ref{eq:A:claim}).
Thus $A_{\vp_{n-4}}(v_{n-2}) = \{c,c^*\}$.
Since $|A_{\vp_{n-4}}(v_{n-3})|\ge 2$, there are at least two ways to extend $\vp_{n-4}$ to $\vp^*_{n-3}$. 
As the choice of $c^*$ was arbitrary, it follows that $A_{\vp_{n-4}}(v_{n-3})=\{\al,\be\}$.

(iii)
If $E_0$ is a $5$-cycle, then $v_n$ and $v_{n-3}$ have a  common neighbor $v_{n-4}$. 
By Claim~\ref{claim:triangle2}~(i), $v_{n-2}$ and $v_{n-4}$ are not $\vp_{n-4}$-risky neighbors of $v_{n-3}$. 
Also, Claim~\ref{last:claim}~(i) implies $|\vp_{n-4}(N_G(v_{n-3}))|=|\vp_{n-3}(N_G(v_{n-3}))| = 2$.
Thus, by \eqref{eq:A:claim}, $|A_{\vp_{n-4}}(v_{n-3})|\geq 2+2h-1\geq 3$, which is a contradiction to (ii).
\end{proof}

By Claim~\ref{claim:vp^*+n-4} (ii), there are two ways to obtain $\vp_{n-3}$ from $\vp_{n-4}$; in other words, $\vp_{n-3}$ is a good extension of $\vp_{n-4}$ to $v_{n-3}$ with $\vp_{n-3}(v_{n-3})\in\{\al, \be\}$.
By Claim~\ref{last:claim}~(iii), the choice of $\vp_{n-3}(v_{n-3})$ affects the available color of $v_n$, which is possible only when the distance between $v_{n-3}$ and $v_n$ is at most $2$.
Hence, $|V(E_0)|\le 5$. 

Since $A_{\vp_{n-3}}(v_{n-2}) = \{\alpha\}$, Claim~\ref{claim:vp^*+n-4} (ii) implies $\vp_{n-3}(v_{n-3})=\be$.
Since $A_{\vp_{n-3}}(v_{n})=\{\be\}$, 
we know $v_n$ and $v_{n-3}$ cannot be adjacent to each other. 
Hence, $E_0$ is a $5$-cycle, which is a contradiction to Claim~\ref{claim:vp^*+n-4}~(iii).

\section{Proofs of Theorems~\ref{thm:chordal} $\sim$ \ref{thm:claw} } \label{sec:lcc} 

In this section, we prove Theorems~\ref{thm:chordal},~\ref{thm:lcc},~\ref{thm:genclaw},~and~\ref{thm:claw}. 
We first prove results regarding chordal graphs.
A vertex is called \textit{simplicial} if its neighbors form a clique.
Recall that for a chordal graph $G$, there is a \textit{simplicial ordering} $v_1,\ldots,v_n$ of $G$ such that for each $i\in[n]$, $v_i$ and its neighbors in $G[\{v_i,\ldots, v_n\}]$ form a clique. 
The maximal clique containing $v_i$ in the subgraph of $G$ induced by $\{v_i,v_{i+1},\ldots,v_n\}$ is  a \textit{simplicial clique} of $G$. 
Let $s(G)$ denote the maximum size of all possible simplicial cliques of $G$.
Note that $s(G) \le \D(G)+1$ for every chordal graph $G$.
 
\begin{thmm:chordal}
For a positive integer $h$, if $G$ is a chordal graph, then $$\hpcf(G) \le 1 + (h+1) \cdot \min \left\{  s(G)-1,  \frac{\D(G)+h-1}{2}\right\}.$$
\end{thmm:chordal}
\begin{proof}
For simplicity, let $m_G=\min \left\{  s(G)-1, \frac{\D(G)+h-1}{2}\right\}$
and $c_G= \lfloor 1 + (h+1) m_G \rfloor$.
Let $G$ be a minimal chordal graph with no proper $h$-CF $c_G$-coloring.
Choose a simplicial vertex $v$, and let $G'=G-v$. 
Note that $G'$ is also a chordal graph, so $G'$ has a proper $h$-CF $c_G$-coloring $\vp$ since $s(G')\le s(G)$ and $\D(G')\le \D(G)$.
If $\deg_G(v) \le m_G$, then 
color $v$ with a color not in 
$\vp(N_G(v)) \cup \uc(N_G(v))$, which is always possible since
$|N_G(v) \cup \uc(N_G(v))| \le (h+1)|N_G(v)|\le (h+1) \lfloor m_G \rfloor\le \lfloor (h+1) m_G \rfloor  = c_G-1$. 
Then $\vp$ is a proper $h$-CF $c_G$-coloring of $G$, which is a contradiction.

Suppose that $\deg_G(v) > m_G$. 
By definition $s(G)-1\geq \deg_G(v) $, so
 $m_G= \frac{\D(G)+h-1}{2}$. 
Now extend $\vp$ to $v$ with a color not in $\vp(N_G(v))$, which is possible since 
$\deg_G(v) \le \Delta(G)=2m_G-(h-1)\le (h+1)m_G-(h-1) \le \lfloor (h+1) m_G \rfloor= c_G-1$.
Take a neighbor $u$ of  $v$.
Since $v$ is a simplicial vertex, $N_G(u)$ contains a clique of size $\deg_G(v)$.
Thus, $u$ has at least $\deg_G(v) - (\deg_G(u)-\deg_G(v))$ unique colors.
Since $\deg_G(v)- (\deg_G(u)-\deg_G(v)) =2\deg_G(v)-\deg_G(u)>\D(G)+h-1-\deg_G(u)\ge h-1$, the existence of $h$ unique colors in the neighborhood of $u$ is guaranteed.
Thus $\vp$ is a proper $h$-CF $c_G$-coloring of $G$, which is a contradiction.
\end{proof}

Theorem~\ref{thm:chordal} implies that every tree has a proper $h$-CF $(h+2)$-coloring.

Recall that the \textit{local vertex clique cover number}  of a graph $G$, denoted $\lcc(G)$, is the minimum $q$ such that for every vertex $v$, there are at most $q$ cliques whose union is $N_G(v)$.
 For a (partial) coloring $\vp$ of $G$, and a vertex $v$ in $G$, let $\vp_o(v)$ be a color that appears an odd number of times in $\vp(N_G(v))$ if it exists.
We call $\vp_o(v)$ an {\it odd color} of $v$. 
We state some useful lemmas first.

\begin{lemma}\label{lem:bigclique}
Let $K$ be a clique of a graph $G$.
If $G-K$ is odd $m$-colorable for some integer $m\ge 2\D(G)-|K|+3$, then $G$ is odd $m$-colorable.
\end{lemma}
 
\begin{proof}
Since every graph $G$ satisfies $\odd(G)\leq\pcf(G)\leq 2\D(G)+1$ (see  introduction), the lemma is true when $|K|\leq 2$. 
Suppose $|K|\ge 3$, and let $\varphi$ be an odd $m$-coloring  of $H$, where  $H=G-K$. 
For each vertex $u\in K$, let $L(u)$ be the set of colors not in $\varphi(N_G(u)\cap V(H))\cup \vp_o(N_G(u)\cap V(H))$,
where $\vp_o(N_G(u) \cap V(H))= \bigcup_{w \in N_G(u) \cap V(H)} \vp_o(w)$.
Note that each $u\in K$ has at most $\D(G)-|K|+1$ neighbors in $N_G(u)\cap V(H)$, so \[|L(u)|\ge m - 2|N_G(u)\cap V(H)|\ge   |K|+1.\] 
 
Let $v_1,\dots, v_{|K|}$ be the vertices of $K$.
We will assign colors to vertices of $K$, one by one in order as follows: 
 choose a color $\vp(v_1)$ for $v_1$ from $L'(v_1)$ where $L'(v_1)\subseteq L(v_1)$ and $|L'(v_1)|=|K|+1$, and for each $i \in \{2, \ldots, |K|\}$,  choose a color $\vp(v_i)$ for $v_i$ from $L'(v_i)$ where $L'(v_i)\subseteq L(v_i) \setminus \{\vp(v_1),\ldots,\vp(v_{i-1})\}$ and $|L'(v_i)|=|K|+2-i$.
 Since there are exactly $|K|+2-i$ choices for each $v_i$, there are $(|K|+1)!$ colorings for $v_1, \ldots, v_{|K|}$.
 These colorings are proper $L$-colorings, so if we can guarantee odd colors of the vertices of $K$, then they become odd $m$-colorings of $G$.
Let us call these colorings {\it $K$-colorings} of $G$.

For each $u\in K$, let $O(u)$ be the set of odd colors of $u$ on vertices in $N_G(u)\cap V(H)$. 
Note that for $u\in K$, if $|O(u)|\neq |K|-1$, then any $K$-coloring of $G$ defines an odd color for $u$ since $K$ is a clique.
If $|O(v_i)|=|K|-1$ for some $i \in \{1,\ldots,|K|\}$, then we exclude at most $(|K|+2-i)(|K|-1)!$ choices of $K$-colorings such that the {set of} colors for $K\setminus\{v_i\}$ equals $O(v_i)$.
Since 
\[(|K|+1)!-\sum_{i=1}^{|K|}(|K|+2-i)(|K|-1)! = \frac{1}{2}|K|(|K|-1)(|K|-1)! = \frac{1}{2}(|K|-1)|K|!>0 \]
there is a $K$-coloring that defines odd colors for all vertices of $K$.
Hence, $G$ has an odd $m$-coloring.
\end{proof}

\begin{lemma}\label{lem:bigclique2}
For two adjacent vertices $v_1$ and $v_2$ of a graph $G$ with $\D(G)\geq 3$, let $K=N_G[v_1]\cap N_G[v_2]$.
If $G-\{v_1,v_2\}$ is odd $m$-colorable for some integer $m \ge 2\D(G)-|K|+ \left\lceil \frac{3}{2}+\sqrt{2|K|-3}\right\rceil$, then $G$ is odd $m$-colorable.
\end{lemma}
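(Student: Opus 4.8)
The plan is to take an odd $m$-coloring $\vp$ of $H=G-\{v_1,v_2\}$ and extend it by assigning a color $a$ to $v_1$ and a color $b$ to $v_2$. Since $\vp$ already gives every vertex of $H$ an odd color, the only oddness conditions that can break are those of $v_1$, $v_2$, and their neighbors, and the only properness conditions to check are at $v_1$ and $v_2$. Write $N_i=N_G(v_i)\setminus\{v_{3-i}\}$ and let $K'=K\setminus\{v_1,v_2\}$ be the set of common neighbors, so $|K'|=|K|-2$ and $|N_i|\le \D-1$. I would first split the neighbors into private ones (adjacent to exactly one of $v_1,v_2$) and common ones (the elements of $K'$). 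For a private neighbor $w$ of $v_i$, coloring $v_i$ adds a single color to $N_G(w)$, so $w$ retains an odd color unless the added color is the unique odd color of $w$ in $H$; this, together with properness, is guaranteed by forbidding $a$ (resp.\ $b$) to lie in $\vp(N_1)\cup\vp_o(N_1\setminus K')$ (resp.\ $\vp(N_2)\cup\vp_o(N_2\setminus K')$). Calling the resulting lists $L(v_1),L(v_2)$, this gives $|L(v_i)|\ge m-2|N_i|+|K'|\ge m-2\D+|K|$, which by hypothesis is at least $\lceil 3/2+\sqrt{2|K|-3}\,\rceil$.

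The key observation concerns the common neighbors. For $w\in K'$, coloring both $v_1$ and $v_2$ adds the two distinct colors $a,b$ to $N_G(w)$, and the set of colors occurring an odd number of times around $w$ changes from its old value $P_w$ to the symmetric difference $P_w\triangle\{a,b\}$. Hence $w$ loses every odd color precisely when $P_w=\{a,b\}$, which forces $|P_w|=2$; so each $w\in K'$ rules out only the at most two ordered pairs $(a,b)$ with $\{a,b\}=P_w$, i.e.\ at most $2(|K|-2)$ ordered pairs in total. The oddness of $v_1$ and $v_2$ themselves is handled even more cheaply: $v_1$ fails to receive an odd color only if $b$ equals the unique odd color of $\vp$ on $N_1$ (when such a color exists), so deleting at most one color $b_0$ from $L(v_2)$ guarantees $v_1$ is odd, and symmetrically deleting one color $a_0$ from $L(v_1)$ guarantees $v_2$ is odd. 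Let $L'(v_1),L'(v_2)$ be the trimmed lists, so $|L'(v_i)|\ge \ell-1$ where $\ell=m-2\D+|K|$.

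Finally I would count. Any ordered pair $(a,b)\in L'(v_1)\times L'(v_2)$ with $a\ne b$ automatically yields a proper coloring in which every private neighbor, as well as $v_1$ and $v_2$, has an odd color; the only remaining obstruction is a common neighbor. Writing $\ell'=\ell-1$ and restricting to lists of size exactly $\ell'$, the number of such pairs with $a\ne b$ is at least $(\ell')^2-\ell'$, and removing the at most $2(|K|-2)$ bad pairs leaves at least $(\ell')^2-\ell'-2(|K|-2)$ valid pairs. The hypothesis gives $\ell'\ge \tfrac12+\sqrt{2|K|-3}$, whence $(\ell')^2-\ell'=\ell'(\ell'-1)\ge (2|K|-3)-\tfrac14$, so $(\ell')^2-\ell'-2(|K|-2)\ge \tfrac34>0$; since this quantity is an integer, it is at least $1$. (As $v_1,v_2\in K$ we always have $|K|\ge 2$, so $2|K|-3\ge 1$ and the argument is uniform.) Any valid pair $(a,b)$ extends $\vp$ to an odd $m$-coloring of $G$.

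The hard part is obtaining an accounting tight enough to beat the threshold $2\D-|K|+\lceil 3/2+\sqrt{2|K|-3}\,\rceil$. A naive treatment of the common neighbors—forbidding the odd color of each $w\in K'$ from both $a$ and $b$—would shrink the lists by up to $2(|K|-2)$ and destroy the bound; the parity identity $P_w\triangle\{a,b\}$ is exactly what converts this multiplicative loss into the additive term $2(|K|-2)$. Likewise, securing the oddness of $v_1$ and $v_2$ by deleting only a single color from each list, rather than by a further multiplicative factor, is what keeps the linear term in the final count small enough for the square-root threshold to close the argument.
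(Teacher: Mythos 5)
Your proposal is correct and follows essentially the same route as the paper's proof: the same lists (forbid the colors of $N_G(v_i)\setminus\{v_{3-i}\}$ for properness, the designated odd colors of the private neighbors and of the other endpoint for their oddness), the same key observation that a common neighbor $w$ fails only for the at most two ordered pairs $(a,b)$ with $\{a,b\}$ equal to its odd-color set, and the same count of at least $(m-2\D(G)+|K|-1)(m-2\D(G)+|K|-2)$ admissible ordered pairs versus at most $2(|K|-2)$ bad ones. The only cosmetic difference is that the paper folds your one-color trimming for the oddness of $v_1,v_2$ directly into the definition of $L(v_i)$, and states the common-neighbor condition without the symmetric-difference phrasing.
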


\begin{proof} 
Note that $|K| \ge 2$.
Let $\vp$ be an odd $m$-coloring of $H$ where  $H=G-\{v_1, v_2\}$. 
For $i \in \{1,2\}$, let $L(v_i)$ be the set of colors not in $\vp(N_G(v_i) \setminus \{v_{3-i}\}) \cup \vp_o((N_G(v_i) \setminus K) \cup \{v_{3-i}\})$.
Since $|N_G(v_i) \setminus \{v_{3-i}\}| \le \D(G)-1$ and $|N_G(v_i) \setminus K| \le \D(G)-|K|+1$,
\[|L(v_i)| \ge m-|N_G(v_i) \setminus \{v_{3-i}\}| - |N_G(v_i) \setminus K| -1 \ge m  - 2\D(G) + |K| -1\geq \left\lceil \frac{3}{2}+\sqrt{2|K|-3}\right\rceil-1. 
\]
Note that  $|L(v_i)| \ge 2$ since $|K|\geq 2$. 

We will show that there exists a proper extension $\vp'$ of $\vp$ to $v_1$ and $v_2$ where $\vp'(v_1)\neq \vp'(v_2)$, $\vp'(v_1)\in L(v_1)$, $\vp'(v_2)\in L(v_2)$, and each vertex in $K \setminus \{v_1,v_2\}$ is guaranteed an odd color;
this is an odd coloring of $G$.
For each $u \in K \setminus \{v_1, v_2\}$, let $O(u)$ be the set of odd colors of $u$ on vertices in $N_G(u) \cap V(H)$.
Unless $O(u)=\{\vp'(v_1), \vp'(v_2)\}$, the vertex $u$ is guaranteed an odd color. 
Thus, at most $2(|K|-2)$ colorings of $v_1$ and $v_2$ must be avoided when proper $L$-coloring $v_1$ and $v_2$ to make $\vp'$  an odd coloring of $G$.
The number of ways to properly $L$-color $v_1$ and $v_2$ from $L(v_1)$ and $L(v_2)$ is at least 
\[|L(v_1)|(|L(v_2)|-1)\geq(m - 2\D(G) + |K| -1)(m - 2\D(G) + |K| -2) > 2(|K|-2).\]

Hence, we can color $v_1$ and $v_2$ to obtain an odd $m$-coloring of $G$, which is a contradiction.
\end{proof}

\begin{lemma}\label{lem:basic}
If $G$ is a  $K_{1,\ell+1}$-free graph, then $G$ has adjacent vertices $v_1, v_2$ such that 
$|N_G[v_1] \cap N_G[v_2]| \ge  \left\lceil \frac{\D(G)}{\ell}\right\rceil +1$.
Moreover, if $G$ is claw-free, then $G$ has a clique of size $\Delta(G)-|N_G[v_1]\cap N_G[v_2]|+2$.
\end{lemma}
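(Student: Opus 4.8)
The plan is to prove the two assertions separately, anchoring both at a vertex of maximum degree.

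For the first assertion, I would let $v_1$ be a vertex with $\deg_G(v_1)=\D(G)$ and consider the induced subgraph $H=G[N_G(v_1)]$, which has exactly $\D(G)$ vertices. Being $K_{1,\ell+1}$-free is exactly the statement that no neighborhood contains an independent set of size $\ell+1$, so $\alpha(H)\le \ell$. The key device is that a \emph{maximal} (not necessarily maximum) independent set $S$ of $H$ is a dominating set of $H$, whence $V(H)=\bigcup_{s\in S}N_H[s]$ and therefore $\D(G)=|V(H)|\le |S|\,(\Delta(H)+1)\le \ell\,(\Delta(H)+1)$. Consequently some $v_2\in N_G(v_1)$ satisfies $\deg_H(v_2)=\Delta(H)\ge \D(G)/\ell-1$, and since $\deg_H(v_2)$ is an integer, $\deg_H(v_2)\ge \lceil \D(G)/\ell\rceil-1$.

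Next I would translate this interior degree into common neighbors: every neighbor of $v_2$ lying in $H$ is a common neighbor of $v_1$ and $v_2$, so $v_1,v_2$ have at least $\lceil \D(G)/\ell\rceil-1$ common neighbors. Because $v_1,v_2\in N_G[v_1]\cap N_G[v_2]$ while neither is a common neighbor of the pair, we get $|N_G[v_1]\cap N_G[v_2]|=2+|N_G(v_1)\cap N_G(v_2)|\ge \lceil \D(G)/\ell\rceil+1$, which is the claimed bound, with $v_1$ and $v_2$ adjacent by construction.

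For the claw-free ``moreover'' part I keep the same $v_1$ (of degree $\D(G)$) and $v_2$, set $K=N_G[v_1]\cap N_G[v_2]$, and study $T:=N_G(v_1)\setminus N_G[v_2]$. A direct count shows $N_G(v_1)\cap N_G[v_2]$ consists of $v_2$ together with the $|K|-2$ common neighbors, so $|N_G(v_1)\cap N_G[v_2]|=|K|-1$ and hence $|T|=\D(G)-|K|+1$. The crucial point is that $T\cup\{v_1\}$ is a clique: the vertex $v_1$ is adjacent to all of $T$, and if two vertices $a,b\in T$ were nonadjacent, then since $a,b\notin N_G[v_2]$ the four vertices $v_1;v_2,a,b$ would induce a $K_{1,3}$, contradicting claw-freeness. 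Thus $T\cup\{v_1\}$ is a clique of size $1+(\D(G)-|K|+1)=\D(G)-|N_G[v_1]\cap N_G[v_2]|+2$.

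Each step is short, and I do not foresee a genuine obstacle; the only places demanding care are the bookkeeping in the first part—applying the domination inequality to a maximal independent set and checking that the bound survives the ceiling—and, in the second part, the exact count of $|N_G(v_1)\setminus N_G[v_2]|$ so that the resulting clique size matches the target exactly.
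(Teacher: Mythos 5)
Your proof is correct and follows essentially the same route as the paper: anchor $v_1$ at a maximum-degree vertex, use an independent set of size at most $\ell$ in $G[N_G(v_1)]$ as a dominating set to find a neighbor $v_2$ with many common neighbors, and for the claw-free part observe that $(N_G(v_1)\setminus N_G[v_2])\cup\{v_1\}$ is a clique of the stated size. The only (immaterial) difference is that you phrase the counting via a maximal independent set and the bound $|V(H)|\le |S|(\Delta(H)+1)$, whereas the paper pigeonholes the edges from $N_G(v_1)\setminus I$ into a maximum independent set $I$; both yield the same estimate.
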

\begin{proof}
Let $v_1$ be a vertex of $G$ with maximum degree.
Since $G$ is $K_{1,\ell+1}$-free, the graph induced by $N_G(v_1)$ has a maximum independent set $I$ of size at most $\ell$. 
Since every vertex in $N_G(v_1)\setminus I$ must have a neighbor in $I$, there is a vertex $v_2 \in I$ such that $|N_G(v_2) \cap N_G(v_1)| \ge \frac{\D(G)-|I|}{|I|}\geq\frac{\D(G)}{\ell}-1$.
Thus, $K = N_G[v_1] \cap N_G[v_2]$ satisfies $|K| \ge \left\lceil \frac{\D(G)}{\ell}\right\rceil +1$.
Moreover, if $G$ is claw-free, then $(N_G(v_1) \setminus N_G[v_2]) \cup \{v_1\}$ is a clique of size $\D(G)-|K|+2$.
\end{proof}

We now prove Theorem~\ref{thm:lcc}, Theorem~\ref{thm:genclaw}, and Theorem~\ref{thm:claw}.
 
\begin{thmm:lcc}
If $G$ is a graph with $\lcc(G)\le \ell$, then
$\chi_{o}(G)\le \frac{(2\ell-1)}{\ell}\D(G)+2$.
\end{thmm:lcc} 

\begin{proof}
For simplicity, let $\D(G)=\D$ and $m_G=\left\lfloor \frac{(2\ell-1)}{\ell}\D\right\rfloor+2=2\D-\left\lceil \frac{\D}{\ell}\right\rceil +2$.
Let $G$ be a minimal graph with $\lcc(G)\le \ell$ that is not odd $m_G$-colorable.
By considering a maximum clique containing a vertex of degree $\D$, we know there is a clique $K$ of size at least $\left\lceil \frac{\D}{\ell}\right\rceil+1 \ge 2$. 
By the minimality of $G$, $\odd(G-K)\le m_{G-K} \le m_G$.
Since $m_G\ge 2\D-|K|+3$,   Lemma~\ref{lem:bigclique} implies that $G$ is odd $m_G$-colorable, which is a contradiction.
\end{proof} 

\begin{thmm:genclaw}
For $\ell\ge 2$, if $G$ is a  $K_{1,\ell+1}$-free graph, then $\odd(G) \le \frac{(2\ell-1)}{\ell}\D(G)   + 1+\left\lceil \sqrt{\frac{2\D(G)}{\ell}+1}\right\rceil$.
\end{thmm:genclaw}
\begin{proof}
For simplicity, let $\D(G)=\D$ and  $m_G = \left\lfloor \frac{(2\ell-1)}{\ell}\D \right\rfloor + 1+\left\lceil \sqrt{\frac{2\D}{\ell}+1}\right\rceil$.
If $\D\le 2$, then $G$ is a path or a cycle, so the bound clearly holds.
If $3\le \D\le \ell$, then $\odd(G) \le \pcf(G)\le 2\D-1\le m_G$ by Theorem~\ref{thm:pcf:h-ver2}.
Now assume that $\D\ge \ell+1$. 
Let $G$ be a minimal $K_{1,\ell+1}$-free graph that is not odd $m_G$-colorable.
 By Lemma~\ref{lem:basic}, $G$ has adjacent vertices $v_1,v_2$ such that 
    $ |N_G[v_1] \cap N_G[v_2]| \ge  \left\lceil \frac{\D}{\ell}\right\rceil +1\geq3$.
    Let $K = N_G[v_1] \cap N_G[v_2]$.
By the minimality of $G$, $G-\{v_1, v_2\}$ has an odd $m_G$-coloring $\vp$ since $\odd(G-\{v_1, v_2\}) \le m_{G-\{v_1, v_2\}} \le m_G$.
Note that $\sqrt{\frac{2\Delta}{\ell}+1 } \ge\sqrt{ 2\left\lceil \frac{\Delta}{\ell}\right\rceil-1}= \sqrt{2(\left\lceil \frac{\Delta}{\ell}\right\rceil+1)-3}$,
 so 
$$m_G  \ge 
2\D-\left(\left\lceil \frac{\Delta}{\ell}\right\rceil+1\right)+ 
 2 +\sqrt{ 2\left(\left\lceil \frac{\Delta}{\ell}\right\rceil+1\right)-3}
 \ge  2\D-|K|+ 2+ \sqrt{2|K|-3},$$
where the last inequality holds since $f(x)=-x+\sqrt{2x-3}$ is a non-increasing function for $x\geq 2$. 
Thus, since $m_G$ is an integer, 
$m_G  \ge  2\D-|K|+ \left\lceil  2+ \sqrt{2|K|-3} \right\rceil
\ge  2\D-|K|+ \left\lceil  \frac{3}{2}+\sqrt{2|K|-3}\right\rceil$, so $G$ also has an odd $m_G$-coloring by Lemma~\ref{lem:bigclique2}, which is a contradiction.
\end{proof}

\begin{thmm:claw} 
If $G$ is a claw-free graph,
then $\odd(G)=\pcf(G) \le \frac{3}{2}\D(G)   + \left\lceil \sqrt{\D(G)} \right\rceil $.
\end{thmm:claw} 

\begin{proof}
For simplicity, let $\D(G)=\D$ and $m_G = \left\lfloor 1.5\D \right\rfloor +\left\lceil \sqrt{\D} \right\rceil$.
If $\D\le 2$, then $G$ is a path or cycle, so the bound clearly holds. 
If $\D =3$, then the bound holds by Theorem~\ref{thm:pcf:h-ver2}, so we may assume that $\D\ge 4$. 
Let $G$ be a minimal claw-free graph that is not odd $m_G$-colorable.
By Lemma~\ref{lem:basic},  $G$ has adjacent vertices $v_1,v_2$ such that 
$|N_G[v_1] \cap N_G[v_2]| \ge  \left\lceil \frac{\D}{2}\right\rceil +1$.
Let $K=N_G[v_1] \cap N_G[v_2]$.

Suppose that $|K| = \left\lceil \frac{\D}{2} \right \rceil +1$. 
By the moreover part of Lemma~\ref{lem:basic}, there is a clique $Q$ of size $\D -|K|+2 = \left\lfloor \frac{\D}{2} \right\rfloor +1$, which implies that $m_G\ge 2\D-|Q|+3$.
By the minimality of $G$,  $\odd(G-Q) \le m_{G-Q} \le m_G$, so Lemma~\ref{lem:bigclique} implies $G$  is odd $m_G$-colorable, which is a contradiction.

Suppose that $|K| \ge  \left\lceil \frac{\D}{2} \right\rceil +2$.
By the minimality of $G$, $\odd(G-\{v_1,v_2\}) \le m_{G-\{v_1,v_2\}} \le m_G$. Note that $$m_G = \left\lfloor 1.5\D \right\rfloor +\left\lceil \sqrt{\D} \right\rceil  \ge  2\D - |K|+  \left\lceil 3/2 + \sqrt{ 2|K|-3}\right\rceil.$$
Lemma~\ref{lem:bigclique2} implies $G$ is odd $m_G$-colorable, which is a contradiction.
\end{proof}

\section*{Acknowledgements}
Eun-Kyung Cho was supported by Basic Science Research Program through the National Research Foundation of Korea(NRF) funded by the Ministry of Education (No. RS-2023-00244543).
Ilkyoo Choi was supported by the Hankuk University of Foreign Studies Research Fund.
Hyemin Kwon and Boram Park were supported by the Basic Science Research Program through the National Research Foundation of Korea (NRF-2022R1F1A1069500).

\bibliographystyle{abbrvurl}
\bibliography{ref}

\end{document}